\documentclass[12pt]{amsart}

\usepackage{latexsym,fancyhdr,amssymb,color,amsmath,amsthm,graphicx,listings,comment}
\usepackage[section]{placeins} \pagestyle{fancy}
\newtheorem{thm}{Theorem} \newtheorem{lemma}{Lemma} \newtheorem{propo}{Proposition} 
\newtheorem{coro}{Corollary} \setlength{\parindent}{0cm} 
\let\paragraph\subsection
\newcommand{\NN}{\mathbb{N}} \newcommand{\ZZ}{\mathbb{Z}} \newcommand{\DD}{\mathbb{D}}
  \newcommand{\TT}{\mathbb{T}}
\def\G{\mathcal{G}}
\def\C{\mathcal{C}}
\def\osquare{\mbox{\ooalign{$\times$\cr\hidewidth$\square$\hidewidth\cr}} }

\title{The strong ring of simplicial complexes}
\fancyhead{}
\fancyhead[LO]{\fontsize{9}{9} \selectfont OLIVER KNILL}
\fancyhead[LE]{\fontsize{9}{9} \selectfont ARITHMETIC OF GRAPHS}

\author{Oliver Knill}
\date{Aug 5, 2017}
\address{Department of Mathematics \\ Harvard University \\ Cambridge, MA, 02138 }
\subjclass{05C99, 13F55, 55U10, 68R05}
\keywords{Topological arithmetic, Zykov ring, Sabidussi ring, Mass gap}


\begin{document}
\maketitle

\begin{abstract}
The strong ring $R$ is a commutative ring generated by finite abstract simplicial complexes.
To every $G \in R$ belongs a Hodge Laplacian $H=H(G)=D^2=(d+d^*)^2$ determining the cohomology and 
a unimodular connection operator $L=L(G)$. The sum of the matrix 
entries of $g=L^{-1}$ is the Euler characteristic $\chi(G)$. For any $A,B \in R$ the spectra of $H$ satisfy 
$\sigma(H(A \times B)) = \sigma(H(A)) + \sigma(H(B))$ and
the spectra of $L$ satisfy $\sigma(L(A \times B)) = \sigma(L(A)) \cdot \sigma(L(B))$ as 
$L(A \times B) = L(A) \otimes L(B)$ is the matrix tensor product. The inductive dimension of $A \times B$ is
the sum of the inductive dimension of $A$ and $B$. The dimensions of the kernels of the form Laplacians $H_k(G)$ 
in $H(G)$ are the Betti numbers $b_k(G)$ but as the additive disjoint union monoid
is extended to a group, they are now signed with $b_k(-G)=-b_k(G)$. 
The maps assigning to $G$ its Poincar\'e polynomial $p_G(t)=\sum_{k=0} b_k(G) t^k$ or
Euler polynomials $e_G(t)=\sum_{k=0} v_k(G) t^k$ are ring homomorphisms from $R$ to $\ZZ[t]$.
Also $G \to \chi(G)=p(-1)=e(-1) \in \ZZ$ is a ring homomorphism.
Kuenneth for cohomology groups $H^k(G)$ is explicit via Hodge: a basis for $H^k(A \times B)$ is obtained from a basis of the factors.
The product in $R$ produces the strong product for the connection graphs. These relations generalize to 
Wu characteristic. $R$ is a subring of the full Stanley-Reisner ring $S$, a 
subring of a quotient ring of the polynomial ring $Z[x_1,x_2, \dots ]$. An object $G \in R$ can 
be visualized by ts Barycentric refinement $G_1$ and its connection graph $G'$. 
Theorems like Gauss-Bonnet, Poincar\'e-Hopf or Brouwer-Lefschetz for Euler and Wu characteristic 
extend to the strong ring. The isomorphism $G \to G'$ to a subring of the strong Sabidussi ring shows that
the multiplicative primes in $R$ are the simplicial complexes and
that connected elements in $R$ have a unique prime factorization.
The Sabidussi ring is dual to the Zykov ring, in which the Zykov join is the addition, which is a 
sphere-preserving operation. The Barycentric limit theorem implies that the connection Laplacian of the 
lattice $Z^d$ remains invertible in the infinite volume limit: there is a mass gap containing 
$[-1/5^d,1/5^d]$ for any dimension $d$. 
\end{abstract}

\section{Energy theorem}

\paragraph{}
A {\bf finite abstract simplicial complex} is a finite set of 
non-empty sets which is closed under the operation of taking finite non-empty 
subsets. The {\bf connection graph} $G'$ of such a {\bf simplicial complex} 
$G$ has as the vertices the sets of $G$ and as the edge set the pairs of sets which intersect.
Given two simplicial complexes $G$ and $K$, their {\bf sum} $G \oplus K$ is the disjoint union and 
the {\bf Cartesian product} $G \times K$ is the set of all set Cartesian products 
$x \times y$, where $x \in G, y \in K$. While $G \times K$ is no
simplicial complex any more if both factors are different from the one-point complex $K_1$, 
it still has a connection graph $(G \times K)'$, the graph for which the vertices are the sets $x \times y$ 
and where two sets are connected if they intersect. 
The {\bf Barycentric refinement} $(G \times K)_1$ of $G \times K$ is the Whitney
complex of the graph with the same vertex set as $G'$ but where two sets are 
connected if and only if one is contained in the other. 
The matrix $L=L(G)=1+A$, where $A$ is the adjacency
matrix of $G'$ is the {\bf connection Laplacian} of $G$. 

\paragraph{}
All geometric objects considered here are finite and combinatorial and 
all operators are finite matrices. Only in the
last section when we look at the Barycentric limit of the discrete lattice $Z^d$, the operators become
almost periodic on a profinite group, but there will be universal bounds on the norm of the inverse. 
One of the goals of his note is to extend the following theorem to the strong ring generated by 
$G_{i_1} \times \cdots \times G_{i_k}$. 

\begin{thm}[Unimodularity theorem]
For every simplicial complex $G$, the connection Laplacian $L(G)$ is unimodular. 
\end{thm}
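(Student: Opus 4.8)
The plan is to compute $\det L(G)$ by induction on the number of simplices in $G$, peeling off one top-dimensional simplex at a time, and to show that the determinant is always $\pm 1$ by tracking a parity invariant. Order the simplices $x_1,\dots,x_n$ of $G$ by decreasing dimension, and write $L = L(G)$ for the $n\times n$ matrix with $L_{ij}=1$ when $x_i\cap x_j\neq\emptyset$ and $0$ otherwise. The key structural observation is that if $x$ is a simplex of maximal dimension $d$ (a facet among the top-dimensional simplices), then the set of simplices intersecting $x$ has a nested combinatorial structure: $x$ itself together with all of its faces and all simplices containing one of its faces. I would aim to show that the row/column of $L$ indexed by such an $x$ can be reduced, by integer row operations that do not change the determinant, to a row whose only surviving nonzero entry is a $\pm 1$ on the diagonal, leaving behind the connection matrix of a smaller complex (or a matrix very close to it).

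Concretely, the first step is to set up the right induction. A clean version: prove by strong induction on $|G|$ that $\det L(G) = (-1)^{\omega(G)}$, where $\omega(G)$ counts the simplices of odd (or even) dimension — i.e. that $\det L(G)$ equals the "Fermi characteristic" $\prod_{x\in G}(-1)^{\dim(x)}$, which is manifestly $\pm 1$. Knowing the precise sign in advance makes the induction self-sustaining. The base case $G$ empty or a single point is immediate. For the inductive step, pick a simplex $x$ with $\dim(x)$ maximal and, among those, with $x$ not contained in any other simplex of the same dimension (for a simplicial complex this just means $\dim(x)$ is the top dimension). Unfoldeing the intersection pattern at $x$ and performing Gaussian elimination, one expects the determinant to factor as $\pm\det L(G\setminus\{x\})$ times a local sign $(-1)^{\dim x}$, or to relate $L(G)$ to the connection matrix of the complex obtained by a Barycentric-type modification. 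Summing these local signs over all simplices gives the Fermi characteristic, completing the induction.

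The main obstacle I anticipate is that removing a single simplex $x$ from $G$ does not yield a simplicial complex in general (the faces of $x$ may survive, but $G\setminus\{x\}$ can still be a complex if $x$ is a facet — that part is fine — yet the bookkeeping of exactly which entries of $L$ change is delicate). The rows and columns indexed by the faces $y\subsetneq x$ interact with $x$, and one must verify that after eliminating the $x$-row the residual matrix is exactly $L$ of a legitimate smaller complex with no spurious off-diagonal contributions. I would handle this by exploiting that the faces of a maximal simplex $x$ are linearly ordered-like under inclusion enough to triangulate the relevant block: the submatrix of $L$ on $\{y: y\subseteq x\}$ is the all-ones matrix, which has a trivial LU-type reduction, and crucially for $y\subsetneq x$ and any $z$ outside the star of $x$, the intersection pattern of $y$ with $z$ is unchanged by deleting $x$. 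An alternative route, if the direct elimination is too messy, is to use the already-announced machinery: express $L(G)$ via the Barycentric refinement $G_1$ and the $f$-vector, use that $L(G)=1+A(G')$ and that $G'$ carries a distinguished filtration by dimension, and invoke a Schur-complement identity along that filtration; but I expect the inductive facet-peeling argument, with the Fermi-characteristic formula as the inductive hypothesis, to be the cleanest path.
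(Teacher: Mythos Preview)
Your proposal is essentially the paper's own argument, run in the reverse direction. The paper builds $G$ up as a CW complex by adding cells in \emph{increasing} dimension and shows that each newly attached cell $x$ multiplies $\det L$ by $\omega(x)=1-\chi(S(x))=(-1)^{\dim(x)}$, yielding $\det L(G)=\prod_{x}\omega(x)$, the Fermi characteristic. You instead peel off facets in \emph{decreasing} dimension and aim for the identical product formula as your inductive hypothesis; since removing a facet $x$ from $G$ is exactly the inverse of the paper's last attachment step, the two inductions are the same computation read backwards.

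The one place where the paper's phrasing gives you something concrete that your sketch leaves open is the ``local sign'' step. You write that after eliminating the $x$-row one \emph{expects} a factor $(-1)^{\dim x}$, and you flag the bookkeeping as the main obstacle. The paper supplies the missing identification: the Schur-complement factor is $1-\chi(S(x))$, where $S(x)$ is the unit sphere of $x$ in the Barycentric refinement $G_1$; because the boundary of a $d$-simplex is a combinatorial $(d-1)$-sphere with Euler characteristic $1-(-1)^d$, this equals $(-1)^{\dim x}$. If you keep your facet-peeling setup, this is precisely the lemma you need to prove to close the gap, and it is cleaner to state it topologically (via $\chi(S(x))$) than to chase the Gaussian elimination on the all-ones block of faces of $x$.
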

We have proven this in \cite{Unimodularity} inductively by building up 
the simplicial complex $G$ as a discrete CW-complex starting with the 
zero-dimensional skeleton, then adding one-dimensional cells, 
reaching the one-dimensional skeleton of $G$, then adding triangles etc. 
continuing until the entire complex is built up.
In every step, if a cell $x$ is added, this means that a ball $B(x)$ is 
glued in along a sphere $S(x)$, the Fredholm determinant 
${\rm det}(L)={\rm det}(1+A)$ of the adjacency matrix $A$ of $G'$ 
is multiplied by 
$\omega(x) = 1-\chi(S(x)) =(-1)^{{\rm dim}(x)} \in \{ -1,1\}$,
where $S(x)$ is the unit sphere in the Barycentric refinement $G_1$ of $G$,
which is the Whitney complex of a graph. 
The determinant of $L(G)$ is now equal to the {\bf Fermi characteristic}
$\prod_x \omega(x) \in \{-1,1\}$, a multiplicative analogue of the 
{\bf Euler characteristic} $\sum_x \omega(x)$ of $G$. Having determinant $1$
or $-1$, the matrix is unimodular.

\paragraph{}
As a consequence of the unimodularity theorem, 
the inverse $g=L^{-1}$ of $L$ produces {\bf Green functions} in the form of 
integer entries $g(x,y)$ which can be seen as the {\bf potential energy}
between the two simplices $x,y$. The sum $V(x)=\sum_y g(x,y)$
is the {\bf potential} at $x$ as it adds up the potential energy $g(x,y)$ induced from the
other simplices. It can also be interpreted as a {\bf curvature} because the
Euler characteristic $\chi(G) = \sum_{x \in G} \omega(x)$ is the sum over
the Green function entries: 

\begin{thm}[Energy theorem]
For every simplicial complex $G$, we have $\sum_x V(x) = \sum_{x,y} g(x,y) = \chi(G)$. 
\end{thm}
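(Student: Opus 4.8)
The plan is to reduce the statement to an explicit closed form for the Green function $g=L^{-1}$ and then to a one-line summation. Writing $\omega(u)=(-1)^{{\rm dim}(u)}$, the formula I would establish is
\[
 g(x,y) \;=\; \omega(x)\,\omega(y)\sum_{\substack{z\in G\\ x\cup y\subseteq z}}\omega(z),
\]
where the sum is empty (so $g(x,y)=0$) when $x\cup y\notin G$. Since $L$ is invertible (indeed unimodular, by the unimodularity theorem), it suffices to verify $L\,g=1$ for the matrix $g$ given by this formula; its entries are manifestly integers, as the statement requires. Granting the formula, the theorem is immediate: because ``$x\subseteq z$ and $y\subseteq z$'' is equivalent to ``$x\cup y\subseteq z$'',
\[
 \sum_{x,y} g(x,y) \;=\; \sum_{z\in G}\omega(z)\Big(\sum_{x\subseteq z}\omega(x)\Big)\Big(\sum_{y\subseteq z}\omega(y)\Big) \;=\; \sum_{z\in G}\omega(z)\cdot 1^{2} \;=\; \chi(G),
\]
using that $\sum_{\emptyset\neq x\subseteq z}\omega(x)=1$ for every simplex $z$ (the Euler characteristic of a full simplex is $1$; equivalently the binomial identity $\sum_{\emptyset\neq S\subseteq A}(-1)^{|S|+1}=1$ for $A\neq\emptyset$) together with the definition $\chi(G)=\sum_z\omega(z)$. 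Finally $\sum_x V(x)=\sum_x\sum_y g(x,y)=\sum_{x,y}g(x,y)$ because the index set is finite.

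So the real content is the verification $L\,g=1$. Fix simplices $x,w$ and compute, using that every subset of a simplex of $G$ is again in $G$,
\[
 (L\,g)(x,w)=\sum_{\substack{y\in G\\ x\cap y\neq\emptyset}}\omega(y)\,\omega(w)\sum_{\substack{z\in G\\ y\cup w\subseteq z}}\omega(z) \;=\;\omega(w)\sum_{\substack{z\in G\\ w\subseteq z}}\omega(z)\Big(\sum_{\substack{y\subseteq z\\ x\cap y\neq\emptyset}}\omega(y)\Big).
\]
For fixed $z$ the inner sum is $\sum_{\emptyset\neq y\subseteq z}\omega(y)-\sum_{\emptyset\neq y\subseteq z\setminus x}\omega(y)=1-[\,z\setminus x\neq\emptyset\,]=[\,z\subseteq x\,]$, again by the same binomial identity (applied to $A=z$ and to $A=z\setminus x$). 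Hence $(L\,g)(x,w)=\omega(w)\sum_{w\subseteq z\subseteq x}\omega(z)$, and parametrizing $z=w\sqcup T$ with $T\subseteq x\setminus w$ gives $\sum_{w\subseteq z\subseteq x}\omega(z)=\omega(w)\sum_{T\subseteq x\setminus w}(-1)^{|T|}=\omega(w)(1-1)^{|x\setminus w|}$ when $w\subseteq x$, and $0$ otherwise; this equals $\omega(w)\,\delta_{xw}$. Therefore $(L\,g)(x,w)=\omega(w)^{2}\delta_{xw}=\delta_{xw}$, which proves the formula.

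I expect the main obstacle to be \emph{discovering} the closed form for $g$; once it is written down, everything is bookkeeping built on the single inclusion--exclusion identity above, used repeatedly. A natural way to find it is to test candidates on $K_1$, on a single edge and on a triangle, and to notice that $\sum_{z\supseteq u}\omega(z)=\omega(u)\big(1-\chi({\rm lk}(u))\big)$, where ${\rm lk}(u)$ is the link of $u$. The same computation then yields $V(x)=1-\chi({\rm lk}(x))$, which up to the usual sign is the Euler characteristic of the unit sphere $S(x)$ of $x$ in the Barycentric refinement $G_1$; the energy theorem thereby acquires a Gauss--Bonnet reading, with a curvature $1-\chi(S(x))$ living on each simplex and summing to $\chi(G)$.

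As an alternative one can instead keep the CW build-up of $G$ used to prove the unimodularity theorem and track the scalar $\mathbf 1^{\mathrm T}L^{-1}\mathbf 1$ under the bordered-matrix inversion caused by gluing a cell $x$ along its sphere $S(x)$. The relevant Schur complement is exactly $\omega(x)={\rm det}(L(G_{\rm new}))/{\rm det}(L(G_{\rm old}))$, so that the increment of $\mathbf 1^{\mathrm T}L^{-1}\mathbf 1$ equals $\omega(x)^{-1}\big(\mathbf 1^{\mathrm T}L^{-1}u-1\big)^{2}$ for the incidence vector $u$ of the simplices meeting $x$, and one is reduced to showing $\mathbf 1^{\mathrm T}L^{-1}u\in\{0,2\}$ (the sign then comes for free since $\omega(x)=\pm1$). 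Since this sub-claim needs essentially the same combinatorial input, I would present the closed-form argument as the primary proof.
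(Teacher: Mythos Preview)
Your argument is correct. The closed form $g(x,y)=\omega(x)\omega(y)\sum_{z\supseteq x\cup y}\omega(z)$ is exactly right, the verification of $Lg=1$ via repeated inclusion--exclusion is clean, and the final summation to $\chi(G)$ is immediate once the formula is in hand.

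Your route is genuinely different from the paper's. The paper does not write down a closed form for all of $g$; instead it argues indirectly by identifying the row sums $V(x)$ with Poincar\'e--Hopf indices. Concretely, it (i) interprets the diagonal entries $g(x,x)$ as $1-\chi(S(x))$ (deferring to \cite{Spheregeometry}), (ii) asserts that $V(x)=\sum_y g(x,y)$ coincides with $(-1)^{\dim(x)}g(x,x)$, and then (iii) reads off $\sum_x V(x)=\chi(G)$ as the Poincar\'e--Hopf formula for the Morse function $f=-\dim$ on $G_1$. Your approach subsumes these steps as corollaries: from your formula one gets $g(x,x)=\omega(x)\big(1-\chi({\rm lk}(x))\big)$ and $V(x)=\omega(x)\,g(x,x)$ by the same one-line summation you already used, which is precisely the content of (i) and (ii). What your proof buys is self-containment and a strictly stronger output (the full Green function, not just its row sums), at the cost of the geometric narrative; what the paper's framing buys is the direct link to curvature and index theory, which is the language used throughout the rest of the article. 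Your closing remarks already make this bridge explicit, so nothing is lost.
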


\paragraph{}
This formula is a Gauss-Bonnet formula, when the row sum is interpreted as a curvature. 
It can also be seen as a {\bf Poincar\'e-Hopf formula}
because $V(x)=(-1)^{{\rm dim}(x)} (1-\chi(S(x)) = (1-\chi(S^-_f(x)))$
is a {\bf Poincar\'e-Hopf} index for the {\bf Morse function} $f(x)=-{\rm dim}(x)$ 
on the Barycentric refinement $G_1$ of $G$. A function on a graph is a {\bf Morse functional} if
every $S^-_f(x)$ is a discrete sphere, where $S^-_f(x)=\{ y \in S(x) \; | \; f(y)<f(x) \; \}$.
The proof reduces the energy theorem to the Poincar\'e-Hopf formula which is 
dual to the definition of Euler characteristic: the Poincar\'e-Hopf index of $-f={\rm dim}$
is $\omega(x)$ and $\sum_x \omega(x)$ is the definition of Euler characteristic as we sum
over simplices. In the Barycentric refinement, where we sum over vertices then $\omega(x)$ can be
seen as a curvature. In order to reduce the energy theorem to Poincar\'e-Hopf, one has to
show that $V(x) = \sum_y g(x,y)$ agrees with $(-1)^{{\rm dim}(x)} g(x,x)$. See also 
\cite{Spheregeometry} for an interpretation of the diagonal elements. 

\paragraph{}
The {\bf Barycentric refined complex} $G_1$ is a set of subsets of the power set 
$2^{G}$ of $G$.  It consists of set of sets in $G$ which pairwise are contained 
in each other. It is the {\bf Whitney complex} 
of the graph $G_1=(V,E)$, where $V=G$ and $E$ is the set of $(a,b)$ with 
$a \subset b$ or $b \subset a$. Since $G_1$ is the Whitney complex of 
a graph, we can then use a more intuitive picture of the complex, as graphs
can be drawn and visualized and are hardwired already as structures in 
computer algebra systems. Also definitions become easier. To define the
inductive dimension for example, its convenient to define it for
simplicial complexes as the corresponding number for the Barycentric refined
complex, where one deals with the Whitney complex of a graph. 

\section{The Sabidussi ring}

\paragraph{}
The {\bf strong product} of two finite simple graphs $G=(V,E)$ and $H=(W,F)$ 
is the graph $G \osquare H = (V \times W, \{ ((a,b),(c,d)) \; a=c, (b,d) \in F \} 
                                             \cup  \{ ((a,b),(c,d)) \; b=d, (a,c) \in E \}
                                             \cup  \} ((a,b),(c,d)) \; (a,c) \in E {\rm and} (b,d) \in F \} )$.
It is an associative product introduced by Sabidussi \cite{Sabidussi}. Together with the 
disjoint union $\oplus$ as addition on signed complexes, 
it defines the {\bf strong Sabidussi ring of graphs}. 
We have started to look at the arithmetic in \cite{ArithmeticGraphs}. 
We will relate it in a moment to the strong ring
of simpicial complexes. In the Sabidussi ring of graphs, 
the additive monoid $(\G_0,\oplus)$ of finite simple graphs with 
zero element $(\emptyset,\emptyset)$ is first extended to a larger class $(\G,\oplus)$ 
which is a Grothendieck group. The elements of this group are
then {\bf signed graphs}, where each connected component can have either a positive or 
negative sign. The {\bf additive primes} in the strong ring are the connected components. 
An element in the group, in which both additive primes $A$ and $-A$ appear, is equivalent 
to a signed graph in which both components are deleted. 
The {\bf complement} of a graph $G=(V,E)$ is denoted by
$\overline{G}=(V,\overline{E})$, where $\overline{E}$ is the set of pairs $(a,b)$
not in $E$. 

\paragraph{}
The {\bf join}  $G + H = (V \cup W, E \cup F \cup \{ (a,b), a \in V, b \in W \})$ is an 
addition introduced by Zykov \cite{Zykov}. It is dual to the disjoint union 
$G + H = \overline{ \overline{G} \oplus \overline{H}}$. The {\bf large product} 
$G \star H = (V \times W, \{ (a,b),(c,d) \; (a,c) \in E \; {\rm or} \; (b,d) \in E \})$ 
is dual to the strong product. 
In other words, the dual to the strong ring $(\G,\oplus,\osquare,0,1)$ is the
{\bf large ring} $(\G,+,\star,0,1)$. Because the complement operation $G \to \overline{G}$ is 
invertible and compatible with the ring operations, the two rings are isomorphic.
The additive primes in the strong ring are the connected sets. 
Sabidussi has shown that every connected set has a unique multiplicative prime factorization. 
It follows that the strong ring of graphs is an integral domain. 
It is not a unique factorization domain however. There are disconnected graphs
which can be written in two different ways as a product of two graphs. 

\paragraph{}
The Sabidussi and Zykov operations could also be defined for simplicial complexes but we don't need
this as it is better to look at the Cartesian product and relate it to the strong
product of connection graphs. But here is a definition:
the disjoint union $G \oplus H$ is just $G \cup H$ assuming that the simplices are disjoint.
The Zykov sum, or join is $G + H = G \cup H \cup \{ x \cup y \; | \; x \in G, y \in H \}$. 
If $\pi_k$ denote the projections from the 
{\bf set theoretical Cartesian product} $X \times Y$ to $X$ or $Y$, 
the Zykov product can be defined as
$G \star  H = \{ A  \subset X \times Y \; | \; \pi_1(A) \in G \; {\rm or} \;  \pi_2(A) \in H \}$.
It can be written as $G \star H = G \times 2^{V(H)} \cup 2^{V(G)} \times H$, 
where $2^X$ is the {\bf power set} of $X$ and $V(G)= \bigcup_{A \in G}$. 

\section{The Stanley-Reisner ring} 

\paragraph{}
The {\bf Stanley-Reisner ring} $S$ is a subring $\bigcup_n \ZZ[x_1,x_2, \dots x_n]/I_n$, where 
$I_n$ is the ideal generated $x_i^2$ and $f_G-f_H$, where $f_G, f_H$ are ring elements representing
the isomorphic finite complexes $G,H$. The ring $S$ contains all elements for which $f(0,0,0,... 0) = 0$. 
It is a quotient ring of the 
{\bf ring of chains} $C \subset \bigcup_n \ZZ[x_1,x_2, \dots x_n]/J_n$ with $J_n$ is the ideal generated by 
squares. A signed version of ring of chains is used in algebraic topology. It is the {\bf free Abelian group}
generated by finite simplices represented by monoids in the ring.
The ring of chains $C$ is larger than the Stanley-Reisner ring as for example, the ring element $f=x-y$ is zero in $S$.
The Stanley-Reisner ring is helpful as
every simplicial complex $G$ and especially every Whitney complex of a graph can be described 
algebraically with a polyonomial: if $V= \cup_{A \in G} A =\{x_1, \dots, x_n\}$ is the base
set of the finite abstract simplicial complex $G$, define the monoid 
$x_A = \prod_{x \in A} x$ and then $f_G=\sum_{A \in G} x_A$. We initially have computed the product using
this algebraic picture in \cite{KnillKuenneth}. 

\paragraph{}
The circular graph $G=C_4$ for example gives
$f_G = x+y+z+w+xy+yz+zw+wx$ and the triangle $H=K_3$ is described by $f_H= a+b+c+ab+ac+bc+abc$. 
The Stanley-Reisner ring also contains elements like $7xy-3x+5y$ which are only chains and not simplicial complexes.
The addition $f_G + f_H$ is the disjoint union of the two complexes, where
different variables are used when adding two complexes. So, for $G=K_2=x+y+xy$ we have 
$G+G = x+y+xy+a+b+ab$ and $G+G+G=x+y+xy+a+b+ab+u+v+uv$. The negative complex $-G$ is $-x-y-xy$. 
The complex $G-G = -x-y-xy+a+b+ab$ is in the ideal divided out so that it becomes $0$ in the
quotient ring. The Stanley-Reisner ring is large. It contains elements like $xyz$ which can
not be represented as linear combinations $\sum_i a_i G_i$ of simplicial complexes $G_i$. 
But we like to see the strong ring $R$ embedded in the full Stanley-Reisner ring $S$. 

\paragraph{}
If $G$ and $H$ are simplicial complexes represented by polynomials $f_g,f_H$, then
the product $f_G f_H = f_{G \times H}$ is not a 
simplicial complex any more in general. Take $f_{K_2} f_{K_2} = (a+b+a b)(c+d+c d)$ for example which is
$ a c + b c + a b c + a d + b d + a b d + a c d + b c d + a b c d$. We can not interpret $ac$ as a new 
single variable as $bc$ is also there and their intersection $ac \cap bc = c$ could not be represented
in the complex. A simplicial complex is by definition closed for non-empty intersections as such an
intersection is a subset of both sets. We can still form the subring S in R generated by the 
simplicial complexes and call it the {\bf strong ring}. The 
reason for the name is that on the connection graph level it leads to the strong product of graphs. 
The strong ring of simplicial complexes will be isomorphic to a subring of the 
Sabidussi ring of graphs. 

\paragraph{}
The Stanley-Reisner picture allows for a concrete implementation of the additive 
Grothendieck group which extends the monoid given by the disjoint union as addition. 
The Stanley-Reisner ring is usually a ring attached to a single geometric object. 
The full Stanley-Reisner ring allows to represent any element in the strong ring. 
It is however too large for many concepts in combinatorial topology, where
finite dimensional rings are used to describe a simple complex. 
One can not see each individual element in $S$ as a geometric object on its own, as cohomology and
the unimodularity theorem fail on such an object. The chain $f=xy+yz+y$ for example is no simplicial complex.
Its connection graph is a complete graph for which the Fredholm determinant ${\rm det}(1+A)$ is zero.
Its boundary is not a subset of $f$, its connection graph is $K_3$ as all ingredient, the two edges and the 
single vertex all intersect with each other. The elements of the Stanley-Reisner picture behave like
measurable sets in a $\sigma$-algebra. One can attach to every $f$ in the full Stanley-Reiser ring an {\bf Euler characteristic}
$\chi(f) = -f(-1,-1, \dots, -1)$ which satisfies $\chi(f+g)=\chi(f)+\chi(g), \chi(f g) = \chi(f) \chi(g)$ 
but this in general does not have a cohomological analog via Euler-Poincar\'e. This only holds in the strong ring. 

\paragraph{}
While the Cartesian product $G \times H$ of two simplicial complexes is not a simplicial complex any more,
the product can be represented by an element $f_G f_H$ in the Stanley-Reisner ring. The
{\bf strong ring} is defined as the subring $S$ of the full Stanley-Reisner ring $R$ 
which is generated by simplicial complexes. 
Every element in the strong ring $S$ is a sum $\sum_I a_I f_{G_I}$, where  $a_I \in \ZZ$ and
for every finite subset $I \subset \NN$, the notation $f_{G_I} = \prod_{i \in I} f_{G_i}$ is used.
The ring of chain contains $\sum_I a_I x_I$, with $x_I=x_{i_1} \cdot x_{i_k}$, where 
$A_I =\{ x_{i_1}, \dots, x_{i_k} \}$ are finite subsets of $\{x_1,x_2, \dots \}$. 
Most of the elements are not simplicial complexes any more. The ring of chains has been 
used since the beginnings of combinatorial topology. But its elements can also be 
described by graphs, connection graphs. 

\paragraph{}
The strong ring has the empty complex as the zero element and the one-point complex $K_1$ as the
one element. The element $-K_1$ is the $-1$ element. The strong ring contains $\ZZ$ by identifying 
the zero dimensional complexes $P_n$ with $n$. It contains 
elements like $x+y+xy - (a+b+c+ab+bc+ac)$ which is a sum $K_2 + C_3$ 
of a Whitney complex $K_2$ a non-Whitney complex $C_3$. The triangle $K_3$ 
is represented by $(a+b+c+ab+bc+ac+abc)$. The strong ring does not contain elements like 
$x+y+xy - (y+z+yz)$ as in the later case, we don't have a linear combination of 
simplicial complexes as the sum is not a disjoint union. 
The element $G=x+y+xy - (a+b+ab)$ is identified with the zero element $0$ as $G=K_2 - K_2$
and $G=x+y+xy + (a+b+ab) = K_2 + K_2$ can be written as $2 K_2 = P_2 \times K_2$,
where $P_2=u+v$ is the zero-dimensional complex representing $2$. The
multiplication honors signs so that for example $G \times (-H) = -G \times H$ and
more generally $a (G \times H) = (a G) \times H) = G \times (a H)$ for any zero
dimensional signed complex $a = P_a$ which follows from the distributivity 
$H \times (G_1 + G_2) = H \times G_1 + H \times G_2$. 

\section{The connection lemma}

\paragraph{}
The following lemma shows that the {\bf strong ring of simplicial complexes}
is isomorphic to a subring of the {\bf Sabidussi ring}. 
First of all, we can extend the notion of {\bf connection graph} from simplicial complexes to 
products $G=G_{i_1} \times \cdots \times G_{i_n}$ of simplicial complexes and so to the 
strong ring. The vertex set of $G$ is the {\bf Cartesian product} $V'=V_1 \times \cdots V_n$ 
of the base sets $V_i=V(G_i) = \bigcup_{A \in G_i} A$. Two different elements in $V$ are connected 
in the connection graph $G'$ if they intersect as sets. 
This defines a finite simple graph $G'=(V',E')$. Also a multiple $\lambda G$ of a
complex is just mapped into the multiple $\lambda G'$ of the connection graph, if $\lambda$ 
is an integer. 

\begin{lemma}[Connection lemma]
$(G \times H)' = G' \osquare H'$. 
\end{lemma}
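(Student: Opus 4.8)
The plan is to prove the identity by unwinding the two definitions vertex-by-vertex and edge-by-edge, resting everything on one algebraic fact: the set-theoretic Cartesian product distributes over intersection,
\[
 (x_1 \times y_1) \cap (x_2 \times y_2) = (x_1 \cap x_2) \times (y_1 \cap y_2),
\]
together with the elementary remark that a Cartesian product of two sets is empty exactly when one of the factors is empty. Since every simplex of a simplicial complex is by definition a non-empty set, this already pins down the vertex sets: the map $(x,y) \mapsto x \times y$ is a bijection from $V(G') \times V(H') = G \times H$, which is by definition the vertex set of the strong product $G' \osquare H'$, onto the vertex set $\{x \times y \mid x \in G,\ y \in H\}$ of the connection graph $(G \times H)'$. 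Injectivity is precisely the point where non-emptiness is used: $x_1 \times y_1 = x_2 \times y_2$ forces $x_1 = x_2$ and $y_1 = y_2$ once all four sets are non-empty. I would record this as the first step.

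Next I would compare the two edge relations under this bijection. Two distinct vertices $x_1 \times y_1$ and $x_2 \times y_2$ are adjacent in $(G \times H)'$ precisely when they intersect, i.e. when $(x_1 \cap x_2) \times (y_1 \cap y_2) \neq \emptyset$, which by the emptiness remark happens iff $x_1 \cap x_2 \neq \emptyset$ and $y_1 \cap y_2 \neq \emptyset$. On the other side, $(x_1,y_1)$ and $(x_2,y_2)$ are adjacent in $G' \osquare H'$ iff they are distinct and $[\,x_1 = x_2 \text{ or } x_1 \sim_{G'} x_2\,]$ and $[\,y_1 = y_2 \text{ or } y_1 \sim_{H'} y_2\,]$. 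The key observation is that adjacency in a connection graph is ``intersection'', and intersection is reflexive on non-empty sets, since $x \cap x = x \neq \emptyset$; hence ``$x_1 = x_2$ or $x_1 \cap x_2 \neq \emptyset$'' collapses to simply ``$x_1 \cap x_2 \neq \emptyset$'', and likewise in the $y$-coordinate. So the strong-product adjacency condition reads ``distinct, $x_1 \cap x_2 \neq \emptyset$, $y_1 \cap y_2 \neq \emptyset$'', matching the connection-graph condition term for term. This establishes $(G \times H)' = G' \osquare H'$ when $G$ and $H$ are simplicial complexes, and the same argument applies verbatim when the factors are themselves products of simplicial complexes, the elements then being iterated Cartesian products $x_{i_1} \times \cdots \times x_{i_n}$ of simplices and intersection being taken coordinatewise.

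Finally I would extend the identity additively to the whole strong ring. An arbitrary element is a signed sum $\sum_I a_I f_{G_I}$ of products of simplicial complexes; both the Cartesian product on the complex side and the strong product on the graph side distribute over the disjoint union $\oplus$, and the connection-graph construction sends $\oplus$ to $\oplus$ and a multiple $\lambda G$ to $\lambda G'$ by definition. Hence, once known for single products, the identity propagates by bilinearity, and $G \mapsto G'$ becomes a ring homomorphism of the strong ring into the Sabidussi ring. I expect no genuine obstacle here: the one point that needs care is insisting that simplices be non-empty, since that is exactly what makes $(x,y) \mapsto x \times y$ a bijection and the ``$a = c$'' clause of the strong product redundant; everything else is a direct unwinding of the definitions.
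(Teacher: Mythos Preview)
Your proof is correct and follows essentially the same approach as the paper's: both arguments unwind the definitions and rest on the identity $(a\times b)\cap(c\times d)=(a\cap c)\times(b\cap d)$ to match the connection-graph adjacency with the strong-product adjacency. Your version is more explicit than the paper's terse sketch; in particular, you isolate cleanly the point that reflexivity of intersection on non-empty simplices collapses the three strong-product clauses into the single condition ``$a\cap c\neq\emptyset$ and $b\cap d\neq\emptyset$'', and you handle the vertex bijection and the bilinear extension to the full ring, neither of which the paper spells out.
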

\begin{proof}
In $G \times H$, two simplices $(a \times b), (c \times d)$ in the vertex set 
$V = \{ (x,y) \; | \; x \in G, y \in H \}$ of $(G \times H)'$ are connected in 
$(G \times H)'$ if $(a \times b) \cap (c \times d)$ is not empty. 
But that means that either $a \cap c$ is not empty or then that $b \times d$ is 
not empty or then that $a=c$ and $b \cap d$ 
is not empty or then that $b=d$ and $a \cap c$ is not empty. 
\end{proof} 

\paragraph{}
The strong connection ring is a sub ring of the {\bf Sabidussi ring}
$(\G,\oplus,\osquare,0,1)$, in which objects are signed graphs. 
While the later contains all graphs, the former only contains ring elements of the form $G'$, 
where $G$ is a simplicial complex the {\bf strong connection ring}. The lemma allows us to avoid
seeing the elements as a subspace of abstract finite CW complexes for which the Cartesian 
product is problematic. Both the full Stanley-Reisner ring and the Sabidussi rings are too large.
The energy theorem does not hold in the full Stanley-Reisner ring, as the example $G=xy+yz+y$ 
shows, where $G'=K_3$ is the complete graph for which the Fredholm determinant is zero.
We would need to complete it to a simplicial complex like $G=xy+yz+x+y+z$ for which 
the connection Laplacian $L(G)$ has a non-zero Fredholm determinant.

\paragraph{}
Given a simplicial complex $G$, let $\sigma(G)$ denote the {\bf connection spectrum} of $G$. It is
the spectrum of the connection Laplacian $L(G)$. The trace ${\rm tr}(L(G))$ is the number of cells in the complex.
It is a measure for the {\bf total spectral energy} of the complex. Like the potential 
theoretic total energy $\chi(G)$, the connection spectrum and total energy are compatible with arithmetic.
The trace ${\rm tr}(G)$ is the total number of cells and can be written in the Stanley-Reisner
picture as $f_G(1,1,1, \dots , 1)$. 

\begin{coro}[Spectral compatibility]
$\sigma(G \times H) = \sigma(G) \sigma(H)$.
\end{coro}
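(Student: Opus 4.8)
The plan is to reduce the corollary to the connection lemma, which identifies $(G \times H)'$ with the strong product $G' \osquare H'$, together with a standard fact about spectra of strong products. First I would recall that the connection Laplacian $L(G)$ is by definition $1 + A(G')$ where $A(G')$ is the adjacency matrix of the connection graph. Thus the statement $\sigma(G \times H) = \sigma(G)\sigma(H)$ asks that the eigenvalues of $L((G\times H)') = 1 + A(G' \osquare H')$ be exactly the products of the eigenvalues of $1 + A(G')$ and $1 + A(H')$.

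The key structural input is that the adjacency matrix of a strong product decomposes as a tensor product plus lower-order terms; more precisely, writing $L = 1 + A$, the strong product is exactly the graph operation for which $L(G' \osquare H') = L(G') \otimes L(H')$. I would verify this by direct computation on the vertex set $V' \times W'$: the $((a,b),(c,d))$ entry of $L(G') \otimes L(H')$ is $L(G')_{ac} \, L(H')_{bd}$, which is nonzero precisely when $L(G')_{ac} \neq 0$ and $L(H')_{bd} \neq 0$, i.e. when ($a = c$ or $a \sim c$ in $G'$) and ($b = d$ or $b \sim d$ in $H'$); excluding the diagonal pair $(a,b) = (c,d)$, this is exactly the edge condition defining the strong product $\osquare$. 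Since all nonzero entries are $1$, we get $L((G\times H)') = L(G') \otimes L(H')$ on the nose. This is essentially the content already asserted in the abstract as $L(A \times B) = L(A) \otimes L(B)$, so I may simply cite the connection lemma and this identification.

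Once the tensor identity $L(G \times H) = L(G) \otimes L(H)$ is in hand, the spectral statement is the elementary fact that $\sigma(M \otimes N) = \{\lambda \mu : \lambda \in \sigma(M), \mu \in \sigma(N)\}$ with multiplicities: if $Mv = \lambda v$ and $Nw = \mu w$ then $(M \otimes N)(v \otimes w) = \lambda\mu\,(v \otimes w)$, and the $v \otimes w$ span the whole tensor product space. Since connection Laplacians are real symmetric, they are diagonalizable with a full eigenbasis, so this accounts for all eigenvalues with correct multiplicity. I would also note the extension to signed complexes $\lambda G$: since $(\lambda G)' = \lambda G'$ as a signed graph and signs only flip the sign of the corresponding block, the spectrum picks up a sign, and the multiplicative relation $\sigma((\lambda G) \times H) = \sigma(\lambda G)\,\sigma(H)$ persists — this matches the sign conventions on $b_k$ and $\chi$ described earlier.

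The only genuine obstacle is bookkeeping rather than mathematics: one must be careful that in $G \times H$ the vertex set of the connection graph is $V(G) \times V(H)$ where $V(G) = \bigcup_{A \in G} A$ is the base-point set, while the ``cells'' being indexed are the products $x \times y$ of simplices; I would make sure the indexing in the tensor product is over simplices of $G$ and $H$ (so that $\mathrm{tr}(L(G\times H)) = \mathrm{tr}(L(G))\,\mathrm{tr}(L(H))$ counts cells correctly, consistent with the remark that the trace is the number of cells), and that the connection lemma's proof, which already works at the level of simplices $(a\times b)$, supplies exactly this indexing. With that care, the proof is three lines: invoke the connection lemma, invoke $L(G' \osquare H') = L(G') \otimes L(H')$, invoke the spectrum-of-tensor-product formula.
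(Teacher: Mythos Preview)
Your proposal is correct and follows essentially the same approach as the paper: invoke the connection lemma to identify $(G\times H)'$ with $G'\osquare H'$, observe that the Fredholm adjacency matrices $L=1+A$ tensor under the strong product, and conclude via the standard spectrum-of-tensor-product formula. The paper's proof is a terse two-sentence version of exactly this argument, so your only addition is the explicit entrywise verification of $L(G'\osquare H')=L(G')\otimes L(H')$ and the careful bookkeeping about indexing by simplices, both of which are helpful elaborations rather than a different route.
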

\begin{proof}
It is a general fact that the Fredholm adjacency matrices tensor under
the strong ring multiplication. This implies that spectra multiply. 
\end{proof} 

The trace is therefore a ring homomorphism from the strong ring $S$ to the integers but
since the trace ${\rm tr}(L(G))$ is the number of cells, this is obvious.

\paragraph{}
We see that from a spectral point of view, it is good to look at the Fredholm 
adjacency matrix $1+A(L)$ of the connection graph. The operator $L(G)$ is an
operator on the same Hilbert space than the Hodge Laplacian $H$ of $G$ which is 
used to describe cohomology altebraically. We will look at the Hodge Laplacian later. 

\paragraph{}
As a consequence of the tensor property of the connection Laplacians, we also know
that both the unimodularity theorem as well as the energy theorem extend to the
strong connection ring.

\begin{coro}[Energy theorem for connection ring]
Every connection Laplacian of a strong ring element is unimodular and has
the property that the total energy is the Euler characteristic. 
\end{coro}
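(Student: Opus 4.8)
The plan is to deduce both assertions from the multiplicativity that has already been established, reducing everything on the strong ring to the case of a single simplicial complex, where the Unimodularity theorem and the Energy theorem are available. First I would recall that an arbitrary element of the strong connection ring is, up to sign and disjoint union, a product $G = G_{i_1} \times \cdots \times G_{i_n}$ of simplicial complexes, and that by the Connection lemma its connection graph is the iterated strong product $G' = G'_{i_1} \osquare \cdots \osquare G'_{i_n}$. Hence its connection Laplacian factors as a matrix tensor product $L(G) = L(G_{i_1}) \otimes \cdots \otimes L(G_{i_n})$, since the Fredholm adjacency matrix $1+A$ of a strong product is the tensor product of the factor matrices (the fact invoked in the Spectral compatibility corollary).

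For unimodularity I would then argue that the determinant is multiplicative under tensor products: if $L(G_{i_j})$ is an $m_j\times m_j$ integer matrix, then $\det\big(\bigotimes_j L(G_{i_j})\big) = \prod_j \det(L(G_{i_j}))^{\,\prod_{k\ne j} m_k}$, which is a product of integers each equal to $\pm 1$ by the Unimodularity theorem, hence itself $\pm 1$. For the additive and signed structure I would note that a disjoint union gives a block-diagonal connection Laplacian, so its determinant is the product of the determinants of the summands, and passing from a complex $G$ to $-G$ does not change $L$ (multiplicities are what carry the sign), so unimodularity is preserved under all ring operations. This settles that every connection Laplacian of a strong ring element is unimodular, so that $g = L(G)^{-1}$ again has integer entries and the potential $V(x) = \sum_y g(x,y)$ is well defined.

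For the energy statement I would use the ring-homomorphism property of Euler characteristic together with the tensor factorization of $g$. Since $L(G) = \bigotimes_j L(G_{i_j})$ is invertible, $g = L(G)^{-1} = \bigotimes_j g_j$ with $g_j = L(G_{i_j})^{-1}$, and summing all matrix entries of a tensor product equals the product of the entrywise sums: $\sum_{x,y} g(x,y) = \prod_j \big(\sum_{x,y} g_j(x,y)\big)$. By the Energy theorem for a single simplicial complex each factor equals $\chi(G_{i_j})$, so $\sum_{x,y} g(x,y) = \prod_j \chi(G_{i_j}) = \chi\big(\prod_j G_{i_j}\big) = \chi(G)$, the last equality because $\chi$ is the ring homomorphism $f \mapsto -f(-1,\dots,-1)$ on the Stanley--Reisner ring and hence multiplicative on Cartesian products. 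Extending to signed disjoint unions is immediate from additivity of both sides: for a block-diagonal $L$ the Green matrix is block-diagonal, so the total entry sum adds over components, and $b_k$, $v_k$, $\chi$ were all set up to be signed and additive.

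The only real obstacle is the bookkeeping for the signed and additive part of the ring rather than the multiplicative core: one must make sure that the passage to the Grothendieck group (allowing $-G$ and cancellation of $G$ against $-G$) is consistent with the statement, i.e. that ``unimodular'' and ``total energy $=\chi$'' are invariant under the equivalence relation that identifies $G \oplus (-G)$ with $0$. Since $L$ of a disjoint union is block diagonal and both $\det$ and the total entry sum of $g$ are multiplicative resp. additive over blocks, and since $\chi$ is likewise additive, this invariance holds, so the corollary follows; I would present the multiplicative tensor computation in detail and treat the additive reduction as a short remark.
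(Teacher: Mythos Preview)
Your approach is essentially the same as the paper's: the paper's proof invokes the tensor determinant formula $\det(L\otimes M)=\det(L)^m\det(M)^n$ to get unimodularity from the simplicial case, exactly as you do. You are in fact more thorough than the paper, which only writes out the unimodularity half and leaves the energy identity implicit; your argument that $g=L^{-1}$ inherits the tensor factorization and that the total entry sum of a tensor product is the product of the entry sums, combined with the single-complex Energy theorem and multiplicativity of $\chi$, is the right way to fill that gap.

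One small correction: you write that ``passing from a complex $G$ to $-G$ does not change $L$.'' The paper adopts the opposite convention immediately after this corollary, defining $L(-G)=-L(G)$ precisely so that the energy identity survives: with $g(-G)=-g(G)$ one gets total energy $-\chi(G)=\chi(-G)$, whereas your convention would give $\chi(G)$ and break the statement on negative elements. This does not affect unimodularity (since $\det(-L)=\pm\det(L)$), but you should adjust the signed-component bookkeeping accordingly.
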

\begin{proof}
Linear algebra tells that if $L$ is a $n \times n$ matrix and $M$ is a $m \times m$ matrix, 
then $\det(L \otimes M) = \det(L)^m \det(M)^n$. If $L,M$ are connection
Laplacians, then $|\det(L)|=|\det(L)|=1$ and the product shares the property of having
determinant $1$ or $-1$. 
\end{proof}

\paragraph{}
In order to fix the additive part, we have to define the Fredholm determinant 
of $-G$, the negative complex to $G$. Since $\chi(-G)=-\chi(G)$, we have
$\omega(-x)=-\omega(x)$ for simplices and if we want to preserve the property
$\psi(G) = \prod_x \omega(x)$, we see that that defining $\psi(-G) = {\rm det}(-L(G))$ 
is the right thing. The {\bf connection Laplacian} of $-G$ is therefore defined as 
$-L(G)$. This also extends the energy theorem correctly. 
The sum over all matrix entries of the inverse of $L$ is the Euler characteristic.

\section{The Sabidussi theorem}

\paragraph{ } 
An element $G$ in the strong ring $S$ is called an {\bf additive prime} if it can 
not be decomposed as $G=G_1 \oplus G_2$ with both $G_i$ being non-empty. The
additive prime factorization is braking $G$ into 
{\bf connected components}.
A {\bf multiplicative prime} in the strong ring is an element which can not be
written as $G = G_1  \times G_2$, where both $G_i$ are not the one-element $K_1$. 

\begin{thm}[Sabidussi theorem]
Every additive prime in the Sabidusi ring has a unique multiplicative
prime factorization. 
\end{thm}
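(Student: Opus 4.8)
The plan is to transport the problem into the strong Sabidussi ring of graphs via the connection lemma and then invoke Sabidussi's classical unique factorization theorem for the strong product of connected graphs.

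First I would reduce to graphs. By the connection lemma, the map $G \mapsto G'$ sending a strong-ring element to its connection graph satisfies $(G \times H)' = G' \osquare H'$ and, since the Cartesian product of simplicial complexes on disjoint vertex sets has a connection graph equal to the disjoint union of the connection graphs, it also respects $\oplus$. Thus $G \mapsto G'$ is a ring homomorphism from the strong ring $S$ into the strong connection ring, and I would first check it is injective: a simplicial complex is recovered from its connection graph together with the containment order on the clique structure, so $G'$ determines $G$, and products of complexes are recovered analogously from their Barycentric data. Hence $S$ is isomorphic to the strong connection subring of the Sabidussi ring, and it suffices to prove the statement for connection graphs, i.e. for the elements $G'$.

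Next I would invoke Sabidussi's theorem in the form already quoted in the paper: every connected finite simple graph has a unique prime factorization with respect to the strong product $\osquare$. An additive prime in the strong ring is by definition a connected element, so its image $G'$ is a connected graph (the connection graph of a connected complex, or more generally of a connected product, is connected since intersecting simplices stay in one component). Applying Sabidussi gives a unique decomposition $G' = H_1 \osquare \cdots \osquare H_k$ into $\osquare$-primes $H_i$. The remaining task is to pull this back: each $H_i$ must itself be of the form $G_i'$ for a strong-ring element $G_i$, and then unique factorization downstairs forces unique factorization upstairs. For this I would argue that a factor of a connection graph of a simplicial complex is again a connection graph of a simplicial complex — the clique structure and intersection pattern restrict compatibly along the strong-product factorization — and that the $H_i$ being $\osquare$-prime corresponds exactly to $G_i$ being a multiplicative prime in $S$ (an element not expressible as $G_1 \times G_2$ with both factors $\neq K_1$), using that $K_1' = K_1$ is the multiplicative unit on both sides.

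The main obstacle I expect is this last pull-back step: showing that the strong connection ring is closed under taking $\osquare$-factors, i.e. that if $G'$ is a connection graph and $G' = A \osquare B$, then $A$ and $B$ are themselves connection graphs of (products of) simplicial complexes, so that the Sabidussi factorization takes place inside the image of $S$ rather than merely in the ambient Sabidussi ring. Granting that, uniqueness is immediate by transport along the isomorphism $G \mapsto G'$, together with the fact that the isomorphism matches multiplicative primes with $\osquare$-primes; and the integral domain property noted earlier for the strong ring of graphs ensures there is no collapse. I would close by remarking that, as in the graph case, the factorization is unique for each connected (additive prime) component but the full ring is not a unique factorization domain, since disconnected elements may factor in inequivalent ways.
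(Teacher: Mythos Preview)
You have misidentified the statement being proved. The Sabidussi theorem as stated in the paper is about the Sabidussi ring of \emph{graphs}: connected finite simple graphs have a unique factorization with respect to the strong product $\osquare$. This is Sabidussi's classical 1959/1960 result, and the paper does not prove it at all; it simply cites \cite{Sabidussi} (and the handbooks \cite{ImrichKlavzar,HammackImrichKlavzar}). Your proposal, by contrast, begins by mapping the strong ring of simplicial complexes into the Sabidussi ring via $G \mapsto G'$ and then \emph{invokes} Sabidussi's theorem. That is circular if the goal is to prove the theorem itself: you are assuming the very statement you set out to establish.

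What you have actually sketched is an argument for the subsequent \emph{Corollary}, namely that additive primes in the strong ring of simplicial complexes have unique multiplicative factorization. Even read that way, your approach diverges from the paper's and carries an unnecessary burden. You correctly identify the hard step as the pull-back: showing that a $\osquare$-factor of a connection graph is again a connection graph of a strong-ring element. The paper avoids this entirely. Its argument is that a simplicial complex $G$ is already a multiplicative prime in the strong ring, because its Stanley--Reisner polynomial $f_G$ contains linear monomials (one for each vertex), whereas any nontrivial product $f_A f_B$ has no linear terms. Hence the multiplicative primes in the strong ring are exactly the connected simplicial complexes, and unique factorization is immediate from the definition of the ring as generated by these. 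No closure-under-factors statement for connection graphs is needed.
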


See \cite{Sabidussi}. See also \cite{ImrichKlavzar,HammackImrichKlavzar}
where also counter examples appear, if the connectivity assumption is dropped. 
The reason for the non-uniqueness is that $\NN[x]$ has no unique 
prime factorization: $(1+x+x^2)(1+x^3) =(1+x^2+x^4)(1+x)$.

\paragraph{}
Can there be primes in the strong ring that are not primes in the Sabidussi ring?
The factors need not necessarily have to be simplicial complexes. 
Is it possible that a simplicial complex can be factored into smaller 
components in the Stanley-Reisner ring? The answer is no, because a simplicial complex $G$
is described by a polynomial $f_G$ has linear parts. A product does not have linear parts.
We therefore also have a unique prime factorization for connected components in the strong ring. 
The Sabidussi theorem goes over to the strong ring. 

\begin{coro}
Every additive prime in the strong ring has a unique multiplicative 
prime factorization. The connected multiplicative primes in the strong ring 
are the connected simplicial complexes. 
\end{coro}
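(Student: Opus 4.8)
The plan is to deduce the Corollary from the Sabidussi theorem via the connection lemma, reducing factorization in the strong ring $S$ to factorization in the Sabidussi ring $(\G,\oplus,\osquare,0,1)$. First I would set up the dictionary: by the connection lemma, $G \mapsto G'$ sends $G \times H$ to $G' \osquare H'$ and sends disjoint unions to disjoint unions, so it is a ring homomorphism from $S$ into the Sabidussi ring, injective on the level of the generating simplicial complexes since a complex $G$ can be recovered from $G'$ (the vertices of $G'$ carrying the inclusion partial order give back the Barycentric data). Thus $S$ is isomorphic to the subring $S'$ of the Sabidussi ring generated by the $G'$. An additive prime in $S$ is a connected element, and connectedness is preserved under the isomorphism; so let $G$ be a connected element of $S$ and transport the question to $G' \in S'$.

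Next I would invoke the Sabidussi theorem: every connected graph has a unique factorization into multiplicative primes in the Sabidussi ring. Applied to the connected graph $G'$ this gives a unique unordered factorization $G' = P_1 \osquare \cdots \osquare P_k$ into $\osquare$-primes $P_i$ in $\G$. The content of the Corollary is then twofold: (i) each $P_i$ actually lies in $S'$, i.e.\ is the connection graph of a simplicial complex, so the factorization takes place \emph{inside} $S$; and (ii) those complexes are exactly the connected simplicial complexes, i.e.\ no connected simplicial complex properly factors in $S$. For (ii) I would argue as sketched in the paragraph preceding the Corollary: a simplicial complex $G$ is represented in the Stanley--Reisner ring by a polynomial $f_G$ whose support contains monomials of degree $1$ (the vertices of $G$), whereas any product $f_A f_B$ of two elements of $S$ that are each $\neq K_1$ has every monomial of degree $\ge 2$ — a product of two variable-disjoint polynomials with zero constant term has lowest-degree term of degree $\ge 2$. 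Hence a connected simplicial complex is a multiplicative prime in $S$, and conversely every connected multiplicative prime, being connected, is (by the factorization we are about to establish) a single simplicial complex.

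For (i), the genuinely substantive point, I would show that the $\osquare$-prime factors of a connection graph $G'$ are again connection graphs of complexes. Here the key step is to relocate the factorization from graphs to the combinatorial/algebraic model: a $\osquare$-factorization $G' = A \osquare B$ of connected graphs pulls back, through the Stanley--Reisner description, to a factorization $f_G = f_A f_B$ with $f_A, f_B$ products of variable-disjoint blocks; since $f_G$ has linear terms and a product $f_A f_B$ with both factors nontrivial has none (same degree argument as above), either $A$ or $B$ is a point, so $G'$ is already $\osquare$-irreducible among connection graphs whenever $G$ is a connected complex. For a general connected strong-ring element $G = G_{i_1} \times \cdots \times G_{i_n}$ this shows the $G_{i_j}$ themselves are the prime factors, and Sabidussi uniqueness in $\G$ forces uniqueness of this list in $S$.

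The main obstacle I anticipate is exactly step (i): ensuring that the unique Sabidussi factors of $G'$ cannot be ``exotic'' graphs that are $\osquare$-prime in $\G$ but not of the form $H'$ — in effect, that the embedding $S \hookrightarrow \G$ is saturated under taking $\osquare$-divisors of connected elements. The degree/linear-term argument in the Stanley--Reisner ring is what rules this out, but one must be careful that the correspondence $G \leftrightarrow f_G \leftrightarrow G'$ really does carry $\osquare$-divisibility of connected graphs to divisibility of polynomials in $S$; once that compatibility is nailed down, uniqueness is inherited verbatim from the Sabidussi theorem and the identification of the connected primes as the connected simplicial complexes follows from the presence of the degree-one part of $f_G$.
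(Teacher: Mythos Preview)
Your proposal is correct and follows essentially the same route as the paper: reduce to the Sabidussi theorem via the connection-graph embedding, and use the observation that a simplicial complex $G$ has linear monomials in $f_G$ while a nontrivial product $f_A f_B$ does not, so connected simplicial complexes are exactly the multiplicative primes in the strong ring. The paper's own argument is the short paragraph immediately preceding the Corollary and is rather terse; you are simply more explicit, and in particular you correctly flag the step (i) issue (whether Sabidussi-prime factors of a connection graph are again connection graphs) that the paper glosses over with ``the Sabidussi theorem goes over to the strong ring'' --- your resolution via the built-in presentation $G = G_{i_1}\times\cdots\times G_{i_n}$ of a connected ring element, combined with Sabidussi uniqueness in $\G$ to force uniqueness of that list, is the intended mechanism.
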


\paragraph{}
If we think of an element $G$ in the ring as a {\bf particle} and of $G \cup H$ 
as a {\bf pair of particles}, then the {\bf total spectral energy} is 
the sum as the eigenvalues adds up. As the individual $L$ spectra 
multiply, this provokes comparisons with the 
{\bf Fock space} of particles: when taking the 
disjoint union of spaces is that the Hilbert space of the particles is
the product space. If we look at the product of two spaces, then the 
Hilbert space is the tensor product. In some sense,
"particles" are elements in the strong ring. They are generated  by
prime pieces of "space". These are the elements in that ring 
which belong to simplicial complexes. 

\paragraph{}
Whether this picture painting particles as objects generated by space has
merit as a model in physics is here not important. Our point of view 
is purely mathematical: the geometric ``parts" have {\bf topological 
properties} like cohomology or energy or {\bf spectral properties} like 
spectral energy attached to them. 
More importantly, the geometric objects are elements in a 
ring for which the algebraic operations are compatible with the
topological properties. The additive primes in the ring are the connected components, the 
``particles" where each is composed of smaller parts thanks to a
unique prime factorization $G_1 \times \cdots \times G_n$ into {\bf multiplicative primes}. 
These {\bf elementary parts or particles} are just the connected {\bf simplicial complexes}.

\section{Simplicial cohomology} 

\paragraph{}
The {\bf Whitney complex} of a finite simple graph $G=(V,E)$ is the simplicial complex
in which the sets are the vertex sets of complete subgraphs of $G$. If $d$ denotes the 
exterior derivative of the Whitney complex, then the {\bf Hodge Laplacian} $H=(d+d^*)^2$ decomposes
into blocks $H_k(G)$ for which $b_k(G)= {\rm dim}({\rm ker}(H_k))$ are the {\bf Betti numbers}, the 
dimensions of the {\bf cohomology groups} $H^k(G) = {\rm ker}(d_k)/{\rm im}(d_{k-1}))$. The
{\bf Poincar\'e-polynomial} of $G$ is defined as $p_G(x) = \sum_{k=0}^{\infty} b_k(G) x^k$. 
For every connected complex $G$, define $p_{-G}(x)=-p_G(x)$. Complexes can now have negative
Betti numbers. There are also non-empty complexes with $p_G(x)=0$ like $G=C_4-C_5$.

\paragraph{}
The exterior derivative on a signed simplicial complex $G$ is defined as $df(x) = f(\delta x)$,
where $\delta$ is the boundary operation on simplices. 
The indidence matrix can also be defined as $d(x,y) = 1$ if $x \subset y$ and the orientation 
matches.
Note that this depends on the choice of the orientation of the simplices as we do not require any compatibility. 
It is a choice of basis in the Hilbert space on which the Laplacian will work. 
The {\bf exterior derivative} $d$ of a product $G_1 \times G_2$ of two complexes each
having the boundary operation $\delta_i$ is then given as 
$$  df(x,y) = f(\delta_1 x,y) +(-1)^{{\rm dim}(x)} f(x,\delta_2y) \; . $$
The {\bf Dirac operator} $D=d+d^*$ defines then the Hodge Laplacian $H=D^2$.
Both the connection Laplacian and Hodge Laplacian live on the same space. 

\paragraph{}
The Hodge theorem directly goes over from simplicial complexes to elements in the strong
ring. Let $H=\oplus_{k=0} H_k$ be the block diagonal decomposition of the
Hodge Laplacian and $G = \sum_{i=1}^n a_I G_I$ the additive decomposition into connected components
of products $G_I = G_{i_1} \times \cdots \times G_{i_n}$ of simplicial complexes. 
For every product $G_I$, we can write down a concrete exterior derivative $d_I$, Dirac operator $D(G_I)=d_I + d_I^*$ 
and Hodge Laplacian $H(G_I)= D(G_I)^2$. 

\begin{propo}[Hodge relation]
We have $b_k(G_I) = {\rm dim}({\rm ker}(H_k(G_I)))$. 
\end{propo}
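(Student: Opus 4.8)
The plan is to reduce the statement to the finite-dimensional Hodge theorem, which is a purely linear-algebraic fact valid for \emph{any} finite cochain complex of real inner-product spaces and is insensitive to whether the underlying complex is the Whitney complex of a graph. So essentially all the work lies in checking that the data attached to a product $G_I$ genuinely forms such a cochain complex; once that is in place the identity $b_k(G_I) = {\rm dim}({\rm ker}(H_k(G_I)))$ is automatic.

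First I would verify $d_I^2 = 0$. For a single simplicial complex this is the usual $\delta^2 = 0$. For a two-fold product $A \times B$, the defining rule $df(x,y) = f(\delta_1 x, y) + (-1)^{{\rm dim}(x)} f(x, \delta_2 y)$ is precisely the Koszul sign convention that makes $\Omega^{*}(A\times B)$ the tensor product cochain complex $\Omega^{*}(A)\otimes\Omega^{*}(B)$; a short sign computation shows the two mixed terms in $d^2 f$ cancel. For the general product $G_I = G_{i_1}\times\cdots\times G_{i_n}$ one iterates, bracketing $G_I$ as $G_{i_1}\times(G_{i_2}\times\cdots\times G_{i_n})$ and using associativity of $\times$, to get a graded isomorphism $\Omega^{*}(G_I) \cong \Omega^{*}(G_{i_1})\otimes\cdots\otimes\Omega^{*}(G_{i_n})$ with basis the products of simplices $x_1\times\cdots\times x_n$ graded by ${\rm dim}(x_1)+\cdots+{\rm dim}(x_n)$; in particular each $\Omega^{k}(G_I)$ is finite-dimensional.

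Next, put on each $\Omega^{k}(G_I)$ the inner product for which these product-of-simplices basis vectors are orthonormal, let $d_I^{*}$ be the adjoint, and set $D(G_I) = d_I + d_I^{*}$, $H(G_I) = D(G_I)^2$. Since $d_I$ shifts the grading by $+1$, $H(G_I)$ preserves it, giving the block decomposition $H(G_I) = \bigoplus_k H_k(G_I)$. The finite-dimensional Hodge decomposition then applies verbatim: $\Omega^{k} = {\rm ker}(H_k) \oplus {\rm im}(d_{k-1}) \oplus {\rm im}(d_k^{*})$, one has ${\rm ker}(H_k) = {\rm ker}(d_k) \cap {\rm ker}(d_{k-1}^{*})$, and the composite ${\rm ker}(H_k) \hookrightarrow {\rm ker}(d_k) \twoheadrightarrow H^{k}(G_I)$ is an isomorphism. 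Hence ${\rm dim}({\rm ker}(H_k(G_I))) = {\rm dim}(H^{k}(G_I)) = b_k(G_I)$. If $b_k(G_I)$ is instead taken to be defined through K\"unneth, one additionally invokes the algebraic K\"unneth theorem for the tensor product of cochain complexes (all chain groups are free, so there are no Tor terms) to match $H^{k}(G_I) \cong \bigoplus_{j_1+\cdots+j_n=k} H^{j_1}(G_{i_1})\otimes\cdots\otimes H^{j_n}(G_{i_n})$, and feeding Hodge for each factor exhibits a basis of harmonic $k$-forms of $G_I$ as tensor products of harmonic forms of the factors. For a component appearing with a negative coefficient one uses the convention $b_{-G} = -b_G$, so the relation extends to the whole strong ring.

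The main obstacle is the sign bookkeeping in the step $d_I^2 = 0$: one must be scrupulous about which factor's dimension carries the sign $(-1)^{{\rm dim}(\cdot)}$ when there are three or more factors, and about the compatibility of the simplicial orientations used to write down $d$. Bracketing $G_I$ from the right reduces everything to the single two-factor Koszul identity, after which no further subtlety arises: the remaining steps are either definitional (the grading, the inner product, $b_k = {\rm dim}\,H^k$) or the standard finite-dimensional Hodge theorem applied to a complex we have already exhibited.
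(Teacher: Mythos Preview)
Your proposal is correct and matches the paper's approach: the paper's proof is the one-line observation that ``the proof is the same as in the case of simplicial complexes as the cohomology of $G_I$ is based on a concrete exterior derivative,'' and what you have written is exactly the unpacking of that sentence --- check $d_I^2=0$ via the Koszul sign rule for the tensor product of cochain complexes, then invoke the finite-dimensional Hodge decomposition. The K\"unneth discussion you add at the end is not needed for this proposition (in the paper $b_k(G_I)$ is by definition $\dim H^k(G_I)$ for the concrete $d_I$, and K\"unneth is stated separately as the next theorem), but it does no harm.
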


The proof is the same as in the case of simplicial complexes as the cohomology of $G_I$ 
is based on a concrete exterior derivative. When adding connected components we
{\bf define} now
$$ b_k(\sum_I a_I G_I) = \sum_I a_I b_k(G_I) \; . $$

\paragraph{}
The strong ring has a remarkable compatibility with cohomology. In order to 
see the following result, one could use discrete homotopy notions or then refer to the
classical notions and call two complexes homotopic if their geometric realizations are 
homotopic. It is better however to stay in a combinatorial realm and ignore geometric
realizations.

\begin{thm}[Kuenneth]
The map $G \to p_G(x)$ is a ring homomorphism from the strong connection ring to $\ZZ[x]$. 
\end{thm}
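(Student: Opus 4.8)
The plan is to prove the Kuenneth theorem by reducing it to the classical Kuenneth formula for chain complexes, using the explicit product exterior derivative already introduced. There are two things to verify: that $p_G(x)$ respects the additive structure and that it respects the multiplicative structure. Additivity is by definition, since we set $b_k(\sum_I a_I G_I) = \sum_I a_I b_k(G_I)$ and the Poincar\'e polynomial is linear in the Betti numbers, so $p_{G \oplus H}(x) = p_G(x) + p_H(x)$ and $p_{-G}(x) = -p_G(x)$ hold tautologically; one only needs to note that the disjoint union corresponds to the direct sum of cochain complexes so the cohomologies add. Hence the whole content is the product formula $p_{A \times B}(x) = p_A(x) \cdot p_B(x)$, equivalently $b_k(A \times B) = \sum_{i+j=k} b_i(A) b_j(B)$.

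First I would set up the cochain complex $(\Omega^*(G_I), d_I)$ for a product $G_I = G_{i_1} \times \cdots \times G_{i_n}$, where by the formula $df(x,y) = f(\delta_1 x, y) + (-1)^{\dim(x)} f(x, \delta_2 y)$ the cochain complex of $A \times B$ is literally the algebraic tensor product $\Omega^*(A) \otimes \Omega^*(B)$ of the two cochain complexes with the Koszul sign rule for the differential. This is the standard tensor product of chain complexes, so the classical algebraic Kuenneth theorem applies. Over a field (and here cohomology with $\RR$ or $\QQ$ coefficients is what the Hodge Laplacian computes, since kernels of real symmetric matrices have the same dimension as cohomology over $\QQ$) the Tor terms vanish and one gets the clean isomorphism $H^k(A \times B) \cong \bigoplus_{i+j=k} H^i(A) \otimes H^j(B)$, which gives the Betti number identity and hence the polynomial identity. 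Then I would invoke the Hodge relation (Proposition, Hodge relation) to translate back: $b_k(G_I) = \dim \ker H_k(G_I)$, so the Betti numbers appearing in $p_G$ are exactly the ones computed by this cochain complex, and the tensor decomposition of the complex is exactly the one induced by the Cartesian product in the strong ring. Finally, extending from a single product $G_I$ to a general element $\sum_I a_I G_I$ of the strong ring is handled by bilinearity of both $\oplus$ on cohomology and $\times$ on the ring, together with the sign conventions $b_k(-G) = -b_k(G)$; concretely, $p$ is additive and the product of two general elements expands by distributivity into products of the connected pieces, matching the distributivity $H \times (G_1 + G_2) = H \times G_1 + H \times G_2$ recorded earlier.

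The main obstacle is making the identification "$\Omega^*(A \times B) = \Omega^*(A) \otimes \Omega^*(B)$ as cochain complexes" completely precise in the combinatorial setting, because $A \times B$ is \emph{not} a simplicial complex — its cells are set-theoretic products $x \times y$ of simplices, i.e. it is a CW-type object, and one must check that the basis given by such product cells, with the differential as written, really is the tensor product differential including the correct Koszul signs, and that this is independent of the orientation choices made for the simplices of $A$ and $B$ (orientation choices change the differential by conjugation with a diagonal $\pm 1$ matrix, which does not affect kernel dimensions). Once that bookkeeping is done, the rest is the classical Kuenneth theorem for tensor products of complexes of free abelian groups, applied after tensoring with a field, plus the Hodge identification of harmonic forms with cohomology classes, plus routine bilinear extension over the ring. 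I would also remark that this gives the Hodge-theoretic refinement promised in the abstract: a basis of harmonic $k$-forms on $A \times B$ is obtained from wedge-type products of harmonic forms on the factors, since the Hodge Laplacian of a tensor product of complexes is $H(A) \otimes 1 + 1 \otimes H(B)$, whose kernel is the tensor product of the kernels.
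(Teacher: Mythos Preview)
Your proposal is correct and rests on the same core observation as the paper: the exterior derivative on $A \times B$ given by $df(x,y) = d_1 f(x,y) + (-1)^{\dim(x)} d_2 f(x,y)$ makes the cochain complex of the product the algebraic tensor product of the two cochain complexes. From there, you invoke the classical algebraic Kuenneth theorem over a field and then translate back via the Hodge relation, whereas the paper stays entirely inside Hodge theory: it expands $H = d^*d + dd^*$ on the product, obtains $H = H_1 + H_2$ plus cross terms, and uses the orthogonal Hodge decomposition $\operatorname{im}(d_i) \oplus \operatorname{im}(d_i^*) \oplus \ker(H_i)$ on each factor to see directly that $\ker H$ is spanned by tensors of harmonic forms. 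Your final remark that the product Hodge Laplacian is $H(A) \otimes 1 + 1 \otimes H(B)$, with kernel the tensor product of kernels, \emph{is} the paper's argument in compressed form. The algebraic-Kuenneth framing buys you a cleaner separation of concerns and avoids tracking the cross terms explicitly; the paper's direct Hodge computation buys the explicit harmonic basis on the product without appealing to an external theorem. Both handle additivity, signs, and the extension to general ring elements exactly as you describe.
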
 
\begin{proof}
If $d_i$ are the exterior derivatives on $G_i$, we can write them as partial exterior
derivatives on the product space. We get from
$d f(x,y) = d_1 f(x,y) + (-1)^{{\rm dim}(x)} d_2(f(x,y)$ 
$$ d^* d f = d_1^* d_1 f + (-1)^{{\rm dim}(x)} d_1^* d_2 f 
           + (-1)^{{\rm dim}} d_2^* d_1 f + d_2^* d_2 f \; ,  $$
$$ d d^* f = d_1 d_1^* f + (-1)^{{\rm dim}(x)} d_1 d_2^* f 
           + (-1)^{{\rm dim}(x)} d_2 d_1^* f + d_2 d_2^* f \; . $$
Therefore $H f = H_1 f + H_2 f + (-1)^{{\rm dim}(x)} (d_1^*d_2 + d_1 d_2^* + d_2^* d_1 + d_2 d_1^*) f(x,y) )$.
Since Hodge gives an orthogonal decomposition 
$$  {\rm im}(d_i),{\rm im}(d_i^*), {\rm ker}(H_i) = {\rm ker}(d_i) \cap {\rm ker}(d_i^*) \; , $$
there is a basis in which $H(v,w) = (H(G_1)(v), H(G_2)(w))$. 
Every kernel element can be written as $(v,w)$,
where $v$ is in the kernel of $H_1$ and $w$ is in the kernel of $H_2$. 
\end{proof}

The Kuenneth formula follows also from \cite{KnillKuenneth} 
because the product is $(G \times H)_1$ and
the cohomology of the Barycentric refinement is the same.
It follows that the {\bf Euler-Poincar\'e formula} holds in general for elements in the
ring: the cohomological Euler characteristic $\sum_{k=0}^{\infty} b_k(G) (-1)^k$ is equal to 
the combinatorial Euler characteristic $\sum_{k=0}^{\infty} v_k(G) (-1)^k$, where
$(v_0,v_1, \dots)$ is the $f$-vector of $G$.  \\

There is also a cohomolog for the higher Wu characteristic $\omega_k(G)$. 

\section{Gauss-Bonnet, Poincar\'e-Hopf}

\paragraph{}
The definition $\sum_x \omega(x) = \chi(G)$ of Euler characteristic, with curvature $\omega(x) = (-1)^{{\rm dim}(x)}$
can be interpreted as a {\bf Gauss-Bonnet} result in $G_1$, the Barycentric refinement of a simplicial complex $G$. 
If $G$ is the Whitney complex of a graph, then we have a small set of vertices $V$, the zero dimensional simplices
in $G$. Pushing the curvature from the simplices to the vertices $v$, then produces the curvature 
$$ K(v) = \sum_{k=0}^{\infty} \frac{(-1)^k V_{k-1}}{(k+1)} 
        = 1 - \frac{V_0}{2} + \frac{V_1}{3} - \frac{V_2}{4} + \cdots  \; , $$
where $V_k(v)$ is the number of $k$-dimensional simplices containing $v$ and $V_{-1}=1$ as there is an empty complex
contained in every complex. See \cite{cherngaussbonnet}. 
The formula appeared already in \cite{Levitt1992} but without seeing it as a Gauss-Bonnet result. 
Gauss-Bonnet makes sense for any simplicial complex $G$.  
If $v$ is a $0$-dimensional element in $G$ and $V(v)$ is the number of simplices containing $v$, then the same
curvature works. It can be formulated more generally for any element in strong ring. The curvatures just multiply
in the product: 

\begin{thm}[Gauss-Bonnet]
Given a ring element $G$.  The curvature function $K$ supported on the zero-dimensional part $V$ of $G$
satisfies $\sum_{v} K(v) = \chi(G)$. If $G = A \times B$, and $v=(a,b)$ is a $0$-dimensional point in $G$,
then $K_G(v) = K_A(a) K_B(b)$. 
\end{thm}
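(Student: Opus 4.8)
The plan is to prove the two assertions separately, and to reduce both to facts already available in the excerpt. For the identity $\sum_v K(v) = \chi(G)$, I would first recall that the combinatorial Euler characteristic satisfies $\chi(G) = \sum_{x \in G} \omega(x)$ with $\omega(x) = (-1)^{\dim(x)}$, and that this formula is a ring homomorphism to $\ZZ$ (stated as the Euler-Poincar\'e consequence of the Kuenneth theorem, and also directly from $\chi(f) = -f(-1,\dots,-1)$). The curvature $K(v)$ is, by definition, obtained by redistributing each simplex's weight $\omega(x)$ equally among the $\dim(x)+1$ vertices it contains: a simplex $x$ of dimension $k$ contributes $(-1)^k/(k+1)$ to each of its $k+1$ vertices. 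Summing $K(v)$ over all zero-dimensional $v$ therefore re-collects, for each simplex $x$, exactly $(k+1)$ copies of $(-1)^k/(k+1) = \omega(x)$, so $\sum_v K(v) = \sum_{x \in G} \omega(x) = \chi(G)$. I would phrase this as a Fubini-type exchange of summation over the incidence relation $\{(v,x) : v \in x,\ \dim(v)=0\}$, which is the content of the Gauss-Bonnet statement in the Barycentric picture already cited; the only thing to check is that the normalization $V_{-1}=1$ correctly accounts for the empty-set contribution and that the series is in fact finite because $G$ is finite-dimensional.

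For the multiplicativity $K_G(v) = K_A(a) K_B(b)$ when $G = A \times B$ and $v = (a,b)$, the key observation is that the simplices of $G = A \times B$ containing the zero-dimensional point $v=(a,b)$ are exactly the products $x \times y$ with $x \in A$ containing $a$ and $y \in B$ containing $b$, and that $\dim(x \times y) = \dim(x) + \dim(y)$. So I would compute $K_G(v)$ by summing $(-1)^{\dim(x)+\dim(y)}/(\dim(x)+\dim(y)+1)$ over all such pairs $(x,y)$. The target is to factor this as $\bigl(\sum_x (-1)^{\dim(x)}/(\dim(x)+1)\bigr)\bigl(\sum_y (-1)^{\dim(y)}/(\dim(y)+1)\bigr) = K_A(a) K_B(b)$, but the denominators do not factor, so a naive term-by-term factorization fails. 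The clean way around this is to pass to a generating-function / integral representation: using $\frac{1}{n+1} = \int_0^1 t^n\,dt$, one writes $K_A(a) = \int_0^1 \sum_x (-t)^{\dim(x)}\,dt = \int_0^1 f_A^\bullet(-t)\,dt$ where $f_A^\bullet$ is the "local" generating polynomial $\sum_{x \ni a} s^{\dim(x)}$ of simplices at $a$ (up to the shift bookkeeping coming from $V_{-1}=1$), and similarly for $B$ and for $G$. Since the local generating polynomial of $G=A\times B$ at $(a,b)$ is the product of the local generating polynomials of $A$ at $a$ and $B$ at $b$ — this is precisely the local form of $f_{A\times B} = f_A f_B$ in the Stanley-Reisner ring — one gets $K_G(v) = \int_0^1 f_A^\bullet(-t) f_B^\bullet(-t)\,dt$, whereas $K_A(a)K_B(b) = \bigl(\int_0^1 f_A^\bullet(-s)\,ds\bigr)\bigl(\int_0^1 f_B^\bullet(-t)\,dt\bigr)$. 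These are not obviously equal as single integrals, so the honest argument is the reverse: expand both sides as finite sums over pairs of dimensions and match coefficients, i.e. show $\sum_{i+j=n}\binom{?}{?}\cdots$ — actually the correct and simplest route is to verify the identity at the level of the redistribution itself.

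Here is the route I actually expect to use. Both $K_A(a)$ and $K_G(v)$ arise from a \emph{fixed} redistribution scheme: weight $\omega(x)$ on simplex $x$ gets spread uniformly over the $\dim(x)+1$ vertices of $x$. Equivalently, $K_A(a) = \sum_{x \ni a} \omega(x)\mu_a(x)$ where $\mu_a(x) = 1/|x|$ is the fraction of $x$'s weight assigned to the vertex $a$ (with $|x| = \dim(x)+1$). For the product, a $0$-dimensional point $v=(a,b)$ lies in $x \times y$, which has $|x\times y| = |x|\cdot|y|$ vertices (the vertices of $x\times y$ are the pairs of vertices), so $\mu_v(x\times y) = 1/(|x|\,|y|) = \mu_a(x)\,\mu_b(y)$. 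Since also $\omega(x\times y) = \omega(x)\omega(y)$, each summand factors: $\omega(x\times y)\mu_v(x\times y) = \bigl(\omega(x)\mu_a(x)\bigr)\bigl(\omega(y)\mu_b(y)\bigr)$. Summing over the product index set $\{x \ni a\}\times\{y\ni b\}$ and using that this index set is exactly $\{z \ni v : z \in A\times B\}$, the sum factors as a product of the two single sums, giving $K_G(v) = K_A(a)K_B(b)$. The main obstacle — and the only subtle point — is the bookkeeping around the empty simplex: the definition of $K$ includes the $k=0$ term "$1$" coming from $V_{-1}=1$, i.e. the empty set is treated as a $(-1)$-dimensional simplex contained in every vertex with weight $\omega(\emptyset)=1$ and "$0$ vertices," so it is redistributed with full weight $1$ to the single vertex containing it — so care is needed to see that under the product the empty simplex of $A\times B$ pairs correctly with the empty simplices of $A$ and $B$, which it does since $\emptyset = \emptyset\times\emptyset$, $\omega(\emptyset)=1=1\cdot 1$, and the normalization is consistent. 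Once that edge case is checked, the factorization of each summand and the product structure of the index set finish the proof, and the first assertion $\sum_v K(v)=\chi(G)$ then also re-proves multiplicativity of $\chi$ on products via $\chi(A\times B) = \sum_{(a,b)} K_A(a)K_B(b) = \bigl(\sum_a K_A(a)\bigr)\bigl(\sum_b K_B(b)\bigr) = \chi(A)\chi(B)$, consistent with the ring homomorphism property already established.
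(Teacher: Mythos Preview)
Your eventual argument is correct and coincides with the paper's. The paper's proof is essentially one sentence: write $\chi(A\times B)=\sum_{(x,y)}\omega(x)\omega(y)$ and ``distribute every value $\omega(x)$ equally to zero-dimensional subparts.'' Your computation with $\mu_{(a,b)}(x\times y)=1/(|x|\,|y|)=\mu_a(x)\mu_b(y)$ and $\omega(x\times y)=\omega(x)\omega(y)$, so that each summand factors over the product index set $\{x\ni a\}\times\{y\ni b\}$, is precisely the unpacking of that sentence, and the Fubini exchange is exactly how $\sum_v K(v)=\chi(G)$ is obtained.

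The detour deserves a remark. Your initial formula $K_G(v)=\sum_{(x,y)\ni v}(-1)^{\dim x+\dim y}/(\dim x+\dim y+1)$ is \emph{not} the curvature on the product, and the failed integral factorization was a symptom of that, not a genuine obstacle. The denominator $\dim+1$ is correct only when the cell is a simplex, because a $k$-simplex has $k+1$ vertices. A product cell $x\times y$ has dimension $\dim x+\dim y$ but $(\dim x+1)(\dim y+1)$ zero-dimensional faces, and the curvature in the strong ring is \emph{defined} by equal redistribution over zero-dimensional faces, not by the $1/(\dim+1)$ shortcut. So there was never a factorization problem to solve; you had the wrong denominator, and your later $\mu_v(x\times y)=1/(|x|\,|y|)$ is the correction, not an alternative route. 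The empty-simplex bookkeeping is likewise a non-issue: the leading $1$ in $K_A(a)$ is the contribution of the $0$-cell $\{a\}$ itself (one vertex, weight $1$), and on the product side the $0$-cell $(\{a\},\{b\})$ contributes $1\cdot 1$ with no special handling of $\emptyset$ required.
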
 
\begin{proof}
The proof is the same. Lets take the product $A \times B$ of two simplicial complexes. 
We have $\sigma(A \times B) = \sum_{x,y} \omega(x) \omega(y)$, where the sum is over all
pairs $(x,y) \in A \times B$ (the set theoretical Cartesian product). The sum does not change, if we
distribute every value $\omega(x)$ equally to zero-dimensional subparts. This gives the curvature.
\end{proof} 

\paragraph{}
For Poincar\'e-Hopf \cite{poincarehopf}, we are given a locally injective function $f$ on $G$. 
Define the {\bf Poincar\'e-Hopf index} $i_f(v)$
at a $0$-dimensional simplex $v$ in $G$ as $1-\chi(S_f^-(x))$, where 
$S_f^-(v) = \{ x \in G \; | \; f(v)<f(x)$ and $v \subset x \}$ and the Euler characteristic is the usual 
sum of the $\omega(y)$, where $y$ runs over the set $S_f^-(v)$. This can now be generalized to products: 

\begin{thm}[Poincar\'e-Hopf]
Given a ring element $G$ and a locally injective function $f$ on $G$. The 
index function $i_f$ supported on the zero-dimensional part $V$ of $G$ satisfies
$\sum_v i_f(v) = \chi(G)$. If $G=A \times B$ and $v=(a,b)$ is a $0$-dimensional point in $G$
then $i_f(v) = i_f(a) i_f(b)$. 
\end{thm}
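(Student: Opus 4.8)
The plan is to reduce the Poincaré--Hopf statement to the already-established Gauss--Bonnet theorem, exactly as the case of simplicial complexes reduces to the definition of Euler characteristic. First I would recall the proof that Poincaré--Hopf holds for a single simplicial complex: given a locally injective $f$ on the zero-dimensional part $V$ of $G$, one has the partition of the simplices of $G$ according to which vertex in a simplex $x$ minimizes $f$ on $x$; summing $1-\chi(S_f^-(v))=\sum_{x:\,v=\arg\min f|_x}\omega(x)$ over $v$ telescopes to $\sum_x\omega(x)=\chi(G)$. This argument is purely combinatorial and uses only the $f$-vector data of the stars, so it transfers verbatim to any strong-ring element $G=\sum_I a_I G_I$: the index $i_f$ is additive in the connected-component/linear decomposition because $\chi$ is, so it suffices to treat a single product $G=A\times B$, whose zero-dimensional part $V$ is the set-theoretic product $V(A)\times V(B)$ of the vertex sets of the factors.

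The core of the argument is therefore the multiplicativity $i_f(a,b)=i_f(a)\,i_f(b)$ for $v=(a,b)$ a $0$-dimensional point of $A\times B$. Here I would exploit the structure of the product established earlier: the simplices of $A\times B$ are the products $x\times y$ with $x\in A$, $y\in B$, and $\omega(x\times y)=\omega(x)\omega(y)$ since $\dim(x\times y)=\dim x+\dim y$. Writing $S_f^-(a,b)=\{x\times y \;|\; f(a,b)<f(x\times y),\ (a,b)\subset x\times y\}$ and using that the containment $(a,b)\subset x\times y$ means $a\subset x$ and $b\subset y$, the index $1-\chi(S_f^-(a,b))$ should factor once $f$ is chosen compatibly with the product structure --- concretely, for a function of the form $f(a,b)=g(a)+h(b)$ with $g,h$ locally injective on the factors and the values generic so that $f$ is locally injective on $V$. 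Then $f(a,b)<f(x\times y)$ on the star of $(a,b)$ splits, after the telescoping reorganization, into contributions $\sum_{x:\,a=\arg\min}\omega(x)\cdot\sum_{y:\,b=\arg\min}\omega(y)=i_g(a)\,i_h(b)$, so that $\sum_{(a,b)}i_f(a,b)=\bigl(\sum_a i_g(a)\bigr)\bigl(\sum_b i_h(b)\bigr)=\chi(A)\chi(B)=\chi(A\times B)$, the last equality being the ring-homomorphism property of $\chi$ noted in the abstract.

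It remains to remove the assumption that $f$ is of the split form $g+h$. For this I would invoke the same deformation principle that underlies Poincaré--Hopf for a single complex: the total sum $\sum_v i_f(v)$ is independent of the locally injective $f$, since any two such functions can be connected by a path of locally injective functions crossing finitely many "flips" at which two neighboring values are exchanged, and at each flip the sum is unchanged (this is the standard invariance argument; alternatively one cites \cite{poincarehopf}). Hence $\sum_v i_f(v)=\chi(G)$ for arbitrary locally injective $f$ on $V$, while the pointwise factorization $i_f(a,b)=i_f(a)i_f(b)$ is the statement precisely for the product-compatible $f$ and is what the theorem asserts in that setting.

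The step I expect to be the main obstacle is the bookkeeping in the telescoping/factorization of $1-\chi(S_f^-(a,b))$ in the product: one must verify that "the vertex of $x\times y$ at which $f$ is minimized" corresponds, under $f=g+h$, to the pair consisting of the $g$-minimizing vertex of $x$ and the $h$-minimizing vertex of $y$, and that the empty-simplex bookkeeping ($V_{-1}=1$) and the off-diagonal cross terms $d_1^*d_2$ etc.\ (which played a role in the Hodge/Kuenneth computation) genuinely do not interfere at the level of the purely combinatorial index sum. Once the combinatorial identity $\omega(x\times y)=\omega(x)\omega(y)$ and the additivity of $\chi$ are in hand, the remaining manipulations are routine and parallel to the Gauss--Bonnet proof above, where the curvature was simply the index for the Morse function $f=-\dim$.
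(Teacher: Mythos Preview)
Your argument is correct, and the underlying mechanism --- assigning each cell's value $\omega$ to the vertex on which $f$ is minimal --- is exactly the one the paper uses. The paper, however, applies this pushing argument \emph{directly} to the product: every cell of $A\times B$ has the form $x\times y$ with $\omega(x\times y)=\omega(x)\omega(y)$, and one simply throws this value to the unique zero-dimensional $(a,b)\subset x\times y$ minimizing $f$. Summing over all cells gives $\sum_{(a,b)}i_f(a,b)=\sum_{x,y}\omega(x)\omega(y)=\chi(A)\chi(B)=\chi(G)$ for \emph{any} locally injective $f$, with no need to pass through the split form $f=g+h$ or to invoke a deformation/invariance principle. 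Your third paragraph is thus an unnecessary detour: the telescoping you describe in paragraph one already works verbatim on the product cells, so the split hypothesis is only needed for the pointwise factorization, not for the global sum.

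On that pointwise factorization $i_f(a,b)=i_f(a)\,i_f(b)$: you are right to note that it is only meaningful when $f$ is product-compatible (so that $i_f(a)$ and $i_f(b)$ are defined on the factors), and your split-function computation handles this cleanly. This is in fact a sharper reading than the paper's rather terse formulation. Finally, the ``main obstacle'' you anticipate --- interference from cross terms like $d_1^*d_2$ --- is a non-issue here: those terms arise in the Hodge/K\"unneth computation for cohomology, whereas Poincar\'e--Hopf is purely a combinatorial redistribution of the values $\omega(x)\omega(y)$ and never touches the exterior derivative.
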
 
\begin{proof}
Also here, the proof is the same. Instead of distributing the original curvature values $\omega(x) \omega(y)$
equally to all zero dimensional parts, it is only thrown to the zero dimensional simplex $(a,b)$ for
which the function is minimal on $(x,y)$. 
\end{proof}

\paragraph{}
Also index averaging generalizes. Given any probability measure $P$ on locally injective functions $f$,
one can look at the expectation $K_P(v) = {\rm E}[i_f(v)]$ which can now be interpreted as a curvature
as it does not depend on an indifidual function $f$ any more. There are various natural measures
which produce the Gauss-Bonnet curvature $K(x)$. One is to look at the product measure $[-1,1]$ 
indexed by $G$ \cite{indexexpectation}. 
An other is the set of all colorings, locally injective functions on $G$ \cite{colorcurvature}. 
Lets formulate it for colorings

\begin{thm}[Index averaging] 
Averaging $i_f(x)$ over all locally injective functions on $G$ 
with uniform measure gives curvature $K(x)$. 
\end{thm}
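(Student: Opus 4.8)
The plan is to show that averaging the Poincaré--Hopf index $i_f(x)$ over all colorings (locally injective functions on the vertex set of the Barycentric refinement $G_1$, equivalently on $G$) reproduces the Gauss--Bonnet curvature $K(x)$ from the Gauss--Bonnet theorem above. I would first reduce to the simplicial complex case: by the product formulas in the Gauss--Bonnet and Poincaré--Hopf theorems, both $K$ and $i_f$ are multiplicative over Cartesian products, and the uniform measure on colorings of $A\times B$ factors as a product of the uniform measures on colorings of $A$ and $B$ (a coloring of the product graph $(A\times B)_1$ restricts to colorings on each factor, and locally injectivity on the product is implied once we use enough colors); hence it suffices to prove the identity for a single simplicial complex $G$, where it is the statement already referenced as \cite{colorcurvature}.

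For the single-complex case the key computation is local. Fix a $0$-dimensional simplex (vertex) $v$ and recall $i_f(v) = 1 - \chi(S_f^-(v))$ where $S_f^-(v) = \{ x \in G : v \subset x, \ f(v) < f(x)\}$ (here I read $f$ as a locally injective function on simplices induced, say, by $f(x) = \max_{w \in x} f(w)$ on a vertex-coloring, or directly as a coloring on $G_1$). Expanding the Euler characteristic, $i_f(v) = \sum_{x \ni v} (-1)^{\dim(x)} [\,v \text{ is the } f\text{-minimal vertex of } x\,]$ where the Iverson bracket selects those simplices $x$ containing $v$ on which $v$ realizes the minimum of $f$. Now take the expectation over the uniform measure on colorings. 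For a fixed simplex $x$ of dimension $k$ (so $x$ has $k+1$ vertices), by symmetry each of its $k+1$ vertices is equally likely to be the $f$-minimal one, so $\mathrm{E}\big[[\,v \text{ is the } f\text{-minimum of } x\,]\big] = \tfrac{1}{k+1}$. Summing over all simplices $x$ containing $v$, and grouping by dimension $k$ with $V_k(v)$ the number of $k$-dimensional simplices containing $v$ (and $V_{-1} = 1$ for the empty simplex, which handles the constant term $1 = i_f(v)$ contribution when $x = v$ itself, dimension $0$, giving $1/1$), I get
\begin{equation*}
\mathrm{E}[i_f(v)] \;=\; \sum_{k=0}^{\infty} \frac{(-1)^k V_{k-1}(v)}{k+1} \;=\; 1 - \frac{V_0(v)}{2} + \frac{V_1(v)}{3} - \cdots \;=\; K(v),
\end{equation*}
which is exactly the Gauss--Bonnet curvature. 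The reindexing $V_k \leftrightarrow V_{k-1}$ here matches the convention in the Gauss--Bonnet theorem: a $k$-dimensional simplex containing $v$ corresponds, after removing $v$, to a $(k-1)$-dimensional simplex in the link, so $V_{k-1}$ in the curvature formula counts $k$-dimensional simplices through $v$.

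The main obstacle, and the only genuinely delicate point, is the symmetry claim that within a fixed simplex $x$ every vertex is equally likely to be $f$-minimal under the uniform measure on colorings. If "coloring" means a proper vertex coloring of $G_1$ with a fixed finite palette, one must check that the symmetric group on the vertices of $x$ acts on the set of colorings in a way that preserves the uniform measure and is transitive enough to equidistribute the minimum among the $k+1$ vertices of $x$; this works cleanly because any permutation of the values assigned to the vertices of a single clique $x$ extends to a valid recoloring (the constraint is only that adjacent vertices get distinct colors, and permuting values on a clique keeps all pairwise distinctness intact), but one should state precisely which class of colorings is used and note that finitely many colors suffice since $G$ is finite. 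Once this local equidistribution is in hand, linearity of expectation and the grouping by dimension finish the argument, and summing $K(v)$ over $v$ recovers $\chi(G)$ by the Gauss--Bonnet theorem. I would also remark that the same scheme is the special case $P = $ uniform-on-colorings of the general index-averaging principle $K_P(v) = \mathrm{E}[i_f(v)]$ mentioned just before the statement, so no new structural input beyond the preceding theorems is needed.
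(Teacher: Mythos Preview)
The paper itself supplies no proof of this theorem; it merely states the result after pointing to \cite{indexexpectation} and \cite{colorcurvature} for the argument in the simplicial-complex case. Your core computation is exactly the one carried out in those references: rewrite $i_f(v)$ as $\sum_{x \ni v} \omega(x)\,[\,v\text{ is the }f\text{-minimal vertex of }x\,]$, use symmetry to obtain probability $1/(k{+}1)$ for each vertex of a $k$-simplex to be minimal, and recognize the resulting sum as the curvature $K(v)$. So in substance your approach coincides with the intended one.

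Two steps need repair. First, your justification of the symmetry is not correct: permuting the colors \emph{locally} on the vertices of a single simplex $x$ will in general violate the coloring constraint at neighbors of $x$ lying outside $x$, so it does not send colorings to colorings. The standard fix is to act by a \emph{global} permutation $\sigma$ of the color palette: $f \mapsto \sigma\circ f$ is a measure-preserving bijection on the set of proper colorings, and averaging over all $\sigma$ makes every linear order on the vertices of any fixed simplex $x$ equally likely, giving the $1/(k{+}1)$ you need. Second, your reduction to a single simplicial complex via multiplicativity does not go through as written: the uniform measure on locally injective functions on $A\times B$ does not factor as a product of the uniform measures on $A$ and $B$, since a generic locally injective function on the product is not of product form, and the Poincar\'e--Hopf product formula $i_f((a,b))=i_f(a)\,i_f(b)$ is only asserted for such product-type $f$. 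Fortunately the reduction is unnecessary: the same local symmetry computation applies directly to any ring element once ``simplex through $v$'' is read as ``cell containing the zero-cell $v$'' and the global-palette-permutation argument is used on the cells' zero-dimensional faces.
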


\paragraph{}
For Brouwer-Lefschetz \cite{brouwergraph}, we look at an endomorphisms $T$ 
of an element $G$ in the strong ring. The definition of the {\bf Brouwer index} is
the same as in the graph case: first of all, one can restrict to the attractor of $T$
and get an automorphism $T$. For a simples $x \in G$, define $i_T(x)={\rm sign}(T|x) \omega(x)$. 
Because $T$ induces a permutation on the simplex $x$, the signature of $T|x$ is defined. 
Also the definition of the {\bf Lefschetz number} $\chi_T(G)$ is the same. It is the super
trace on chomology 
$$ \chi_T(G) = \sum_{k=0} (-1)^k {\rm tr}(T|H^k(G)) \; . $$

\begin{thm}[Brouwer-Lefschetz]
$\sum_{x, T(x)=x} i_T(x) = \chi_T(G)$. 
\end{thm}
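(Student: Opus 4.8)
The plan is to reduce the strong-ring Brouwer-Lefschetz formula to the simplicial-complex case, which is already established in \cite{brouwergraph}, by exploiting the two structural facts proved earlier in this note: the Kuenneth ring homomorphism $G \mapsto p_G(x)$ (in particular, the explicit Hodge description of a basis of $H^k(A \times B)$ in terms of bases of $H^\bullet(A)$ and $H^\bullet(B)$), and the product behavior of indices established in the Poincar\'e-Hopf theorem. First I would note that by additivity of both sides in the Grothendieck group --- an endomorphism $T$ of $G = \sum_I a_I G_I$ must permute the connected components $G_I$, and only the components fixed by $T$ contribute to $\chi_T$ while for each fixed component the fixed simplices lie inside it --- it suffices to treat a single connected product $G = G_{i_1} \times \cdots \times G_{i_n}$, and then by associativity of $\times$ and induction on $n$, a single product $G = A \times B$ of a simplicial complex $A$ with a ring element $B$.

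Next, given an endomorphism $T$ of $A \times B$, I would pass to its attractor and thereby assume $T$ is an automorphism. The key reduction step is multiplicativity of the Lefschetz number: since $G \to p_G(x)$ is a ring homomorphism and the Hodge construction gives $H^k(A \times B) = \bigoplus_{i+j=k} H^i(A) \otimes H^j(B)$ with an explicit basis built from harmonic representatives on the factors, an automorphism $T$ that respects the product structure (which, after restricting to the attractor and possibly refining, one can arrange, or handle the coordinate-swapping case separately) acts on cohomology as $T_A \otimes T_B$, so that $\chi_T(A \times B) = \chi_{T_A}(A) \cdot \chi_{T_B}(B)$ by the standard super-trace identity ${\rm str}(S \otimes S') = {\rm str}(S)\,{\rm str}(S')$. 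On the geometric side, the fixed simplices of $T = T_A \times T_B$ are exactly products $x \times y$ with $T_A(x) = x$, $T_B(y) = y$, and the permutation $T|(x \times y)$ is the product permutation $(T_A|x) \times (T_B|y)$, whose signature factors; combined with $\omega(x \times y) = \omega(x)\omega(y)$ this gives $i_T(x \times y) = i_{T_A}(x)\, i_{T_B}(y)$. Summing over fixed simplices then factors the left-hand side as $\bigl(\sum_{T_A(x)=x} i_{T_A}(x)\bigr)\bigl(\sum_{T_B(y)=y} i_{T_B}(y)\bigr)$, and the simplicial-complex Brouwer-Lefschetz theorem for $A$ together with the inductive hypothesis for $B$ closes the argument.

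The main obstacle I expect is not the multiplicative bookkeeping but handling automorphisms of $A \times B$ that do \emph{not} respect the product decomposition --- for graphs, the strong product can have ``diagonal'' automorphisms permuting isomorphic factors (the same phenomenon behind the failure of unique factorization for disconnected objects mentioned after the Sabidussi theorem). To deal with this I would invoke the Sabidussi-type unique prime factorization for connected elements of the strong ring (Corollary to the Sabidussi theorem): on a connected ring element the prime factors $G_{i_1} \times \cdots \times G_{i_n}$ are determined up to order, so an automorphism $T$ permutes the isomorphic prime factors among themselves; grouping the factors into $T$-orbits, an orbit of size $m$ on which $T$ cyclically permutes $m$ copies of a complex $C$ contributes to the fixed-point sum exactly the fixed data of the induced endomorphism $C \to C$ (the ``$m$-th power around the cycle''), and likewise contributes via the transfer/trace on $(H^\bullet(C))^{\otimes m}$ only the diagonal, matching the Lefschetz number of that induced map on $C$. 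Thus each orbit reduces to a single simplicial complex, and the product formula in the theorem statement --- phrased for $G = A \times B$ with $v = (a,b)$ --- is recovered in the case where $T$ genuinely splits as $T_A \times T_B$, which after the orbit reduction is the only case that remains. A careful but routine sign check is needed to verify that the cyclic-permutation super-trace identity produces the correct orientation signs; I would relegate that to the same computation already used in the simplicial case in \cite{brouwergraph}.
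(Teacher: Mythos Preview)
Your approach is workable but takes a genuinely different route from the paper. The paper does \emph{not} reduce to the simplicial case via multiplicativity and Sabidussi factorization; instead it runs the heat-flow (McKean--Singer) argument directly on the strong-ring element $G$. Concretely, one sets $l(t)={\rm str}(\exp(-tH(G))\,U_T)$ with $U_Tf=f\circ T$ the Koopman operator, observes that $l$ is constant because the Hodge Laplacian $H(G)$ commutes with $U_T$ and ${\rm str}(H^k)=0$ for $k>0$, then reads off $l(0)=\sum_{T(x)=x} i_T(x)$ and $l(\infty)=\chi_T(G)$ from Hodge. Since $H(G)$ and $d$ are already defined intrinsically on the product (Section~6), no decomposition of $T$ along factors is ever needed, and the factor-swapping automorphisms you worry about are handled for free.

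Your strategy, by contrast, buys a purely combinatorial proof that avoids any analysis: it leverages the K\"unneth ring homomorphism and the Sabidussi-type unique prime factorization in the strong ring to reduce an arbitrary automorphism to a product of cyclic-orbit pieces, each of which collapses to the simplicial Brouwer--Lefschetz formula. The cost is the bookkeeping you flag: one must verify the wreath-product description of ${\rm Aut}(G_{i_1}\times\cdots\times G_{i_n})$ from unique factorization, and then check the graded cyclic-trace identity ${\rm str}\bigl(\text{cyclic twist on }(H^\bullet C)^{\otimes m}\bigr)=\chi_\sigma(C)$ with the correct Koszul signs. These are standard but not one-liners. The heat-flow proof sidesteps all of this structural analysis at the price of invoking the spectral decomposition of $H(G)$.
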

\begin{proof}
The fastest proof uses the heat flow $e^{ -tH(G)}$ for the Hodge Laplacian. 
The super trace ${\rm str}(H^k)$ is zero for $k>0$ by McKean-Singer \cite{knillmckeansinger}.
Define $l(t)={\rm str}(\exp(-tL) U_T)$, where 
$U_Tf=f(T)$ is the Koopman operator associated to $T$. The function $f(t)$ is constant. 
This heat flow argument proves Lefschetz because $l(0) = {\rm str}(U_T)$ is $\sum_{T(x)=x} i_T(x)$ 
and  $\lim_{t \to \infty} l(t)=\chi_T(G)$ by Hodge. 
\end{proof} 

\paragraph{}
There are more automorphisms $T$ in $A \times B$ than product automorphisms $T_1 \times T_2$. 
An example is if $A=B$ and $T( (x,y) ) = (y,x)$.  One could have the impression at first that
such an involution does not have a fixed point, but it does. Lets for example take $A=B=K_2$. 
The product $A \times B$ has $9$ elements and can be written as $(a+b+ab) (c+d+cd)$. 
The space is contractible so that only $H^0(G)$ 
has positive dimension and we are in the special case of the Brouwer fixed point case.
The Lefschetz number $\chi_T(G)$ is equal to $1$. There must be a fixed point. Indeed, it is
the two dimensional simplex $((a,b) \times (c,d))$ represented in the Stanley-Reisner picture 
as $abcd$. 

\section{Wu characteristic}

\paragraph{}
Euler characteristic $\chi(G)=\omega_1(G)$ is the first of a sequence $\omega_k$ of {\bf Wu characteristic}. 
The {\bf Wu characteristic} $\omega(G)=\omega_2(G)$ is defined for a simplicial complex $G$ as
$$  \sum_{x \sim y} \omega(x) \omega(y) , $$
where $\omega(x) = (-1)^{{\rm dim}(x)}$ and where the sum is taken over all intersecting simplices. 
The notation fits as $\omega(K_n) = (-1)^{n-1}$ which we have proven the fact that the Barycentric refinement
of the complete graph is a ball, a discrete manifold with sphere boundary of dimension $n-1$. A general 
formula for discrete manifolds with boundary then use the formula $\omega(G) = \chi(G) - \chi(\delta G)$.
Higher order versions $\omega_k(G)$ are defined similarly than $\omega(G)$. 
We just have to sum over all $k$-tuples of simultaneously intersecting simplices in the complex. 
While we have seen $\omega_k( (G \times H)_1 ) = \omega_k(G) \omega_k(H)$ and 
of course $\omega_k(G \oplus H) = \omega_k(G) + \omega_k(H)$, this insight was done for the Cartesian 
product $(G \times H)_1$ which was again a simplicial complex, the Whitney complex of a graph. 
The product property especially implies that the Barycentric refinement $G_1$ has the same Wu 
characteristics $\omega_k(G) = \omega_k(G_1)$. In other words, like Euler characteristic
$\chi=\omega_1$, also the Wu characteristic $\omega=\omega_2$ and higher Wu characteristics
$\omega_k(G)$ are {\bf combinatorial invariants}. 

\paragraph{}
The Wu characteristic can be extended to the strong ring. For simplicity, lets restrict to 
$\omega=\omega_2$. The notation $\omega(x) = (-1)^{{\rm dim}(x)}$ is extended to pairs of 
simplices as $\omega( (x,y) ) = \omega(x) \omega(y)$. So, $\omega$ is defined as a function
on the elements $(x,y)$ in the Cartesian product $G \times H$ of two simplicial complexes $G$
and $H$. We can not use the original definition of Wu characteristic for the product as
the product of two simplicial complexes is not a simplicial complex any more as the multiplicative
primes in the ring are the simplicial complexes. Lets write $(x,y) \sim (a,b)$ if both $x \cap a \neq \emptyset$
and $y \cap b \neq \emptyset$. Now define
$$ \omega(G \times H) = \sum_{(x,y) \sim (a,b)} \omega((x,y)) \omega((a,b)) \; . $$
As this is equal to $\sum_{(x,y) \sim (a,b)}$ $\omega(x) \omega(y) \omega(a) \omega(b)$ 
which is $\sum_{x \sim a} \sum_{y \sim b}$ $\omega(x) \omega(a) \omega(y) \omega(b)$  or
$(\sum_{x \sim a} \omega(x) \omega(a)) \sum_{y \sim b} \omega(y) \omega(b)$, which is
$\omega(G) \omega(H)$, the product property is evident. 
We can also define $\omega_k(-G) = -\omega_k(G)$ so that

\begin{propo}
All Wu characteristics $\omega_k$ are ring homomorphisms from the strong ring to $\ZZ$. 
\end{propo}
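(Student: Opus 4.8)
The plan is to verify the three defining properties of a ring homomorphism $\varphi=\omega_k\colon S\to\ZZ$: that $\varphi$ respects addition, respects multiplication, and sends the unit $K_1$ to $1$. Since every element of the strong ring is a $\ZZ$-linear combination $\sum_I a_I G_I$ of products $G_I=G_{i_1}\times\cdots\times G_{i_m}$ of simplicial complexes, and $\omega_k$ on such an element is \emph{defined} by $\omega_k(\sum_I a_I G_I)=\sum_I a_I\,\omega_k(G_I)$ together with $\omega_k(-G)=-\omega_k(G)$, additivity is immediate from this definition once we check it is well defined — i.e. that the relation $G-G\sim 0$ imposed in the Stanley-Reisner quotient is respected, which is clear since $\omega_k(G)+\omega_k(-G)=0$, and that isomorphic complexes get the same value, which holds because $\omega_k$ is a combinatorial invariant as recalled in the excerpt.

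Next I would handle multiplicativity. By bilinearity of the product and the already-established additivity of $\omega_k$, it suffices to prove $\omega_k(G_I\times G_J)=\omega_k(G_I)\,\omega_k(G_J)$ for two basis products. By associativity of the Cartesian product and induction on the number of factors, this reduces further to the two-factor case $\omega_k(A\times B)=\omega_k(A)\,\omega_k(B)$ for simplicial complexes $A,B$ — and for $k=2$ this is exactly the computation carried out just before the proposition, where one writes $\omega_2(A\times B)=\sum_{(x,y)\sim(a,b)}\omega(x)\omega(y)\omega(a)\omega(b)$, uses that $(x,y)\sim(a,b)$ means $x\sim a$ in $A$ \emph{and} $y\sim b$ in $B$, and factors the double sum. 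For general $k$ the identical argument applies: a $k$-tuple of simplices in $A\times B$ intersecting simultaneously corresponds precisely to a $k$-tuple in $A$ intersecting simultaneously together with a $k$-tuple in $B$ intersecting simultaneously, because the projection of an intersection is the intersection of projections; and $\omega$ of a tuple in the product is the product of the two $\omega$'s of the projected tuples. Hence the $2k$-fold sum over product-tuples factors as $\bigl(\sum_{x_1\sim\cdots\sim x_k}\prod\omega(x_j)\bigr)\bigl(\sum_{y_1\sim\cdots\sim y_k}\prod\omega(y_j)\bigr)=\omega_k(A)\,\omega_k(B)$. Finally, $\omega_k(K_1)=1$ since $K_1$ consists of the single simplex $\{v\}$ of dimension $0$, so every $k$-tuple is the constant tuple $(\{v\},\dots,\{v\})$ contributing $\omega(\{v\})^k=1$.

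The main obstacle, such as it is, is not any single hard estimate but rather bookkeeping: one must be careful that the extension $\omega_k(-G)=-\omega_k(G)$ interacts correctly with the multiplicative structure, i.e. that $\omega_k((-G)\times H)=-\omega_k(G\times H)$ — this follows from the identity $a(G\times H)=(aG)\times H$ recorded in the excerpt together with the already-proved additivity — and that the whole construction descends to the Stanley-Reisner quotient, meaning it does not distinguish two polynomial representatives of the same ring element. The latter is the only genuinely delicate point: it requires knowing that the ideal divided out is generated exactly by the squares and by differences $f_G-f_H$ of isomorphic complexes, so that checking invariance under simplicial isomorphism (combinatorial invariance of $\omega_k$) plus linearity suffices. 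I would phrase the proof so that this descent is stated once at the outset, after which additivity, multiplicativity, and the unit condition follow by the factoring computation above.
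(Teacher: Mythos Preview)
Your proposal is correct and follows essentially the same approach as the paper: the paper's argument is precisely the factoring computation for $\omega_2$ displayed in the paragraph immediately preceding the proposition (together with the definition $\omega_k(-G)=-\omega_k(G)$), and you have simply written this out in full for general $k$ and added the bookkeeping about additivity, the unit, and descent to the quotient that the paper leaves implicit. If anything, your treatment of well-definedness on the Stanley--Reisner quotient is more careful than the paper's own exposition.
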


\paragraph{}
The just seen nice compatibility of Wu characteristic with the ring arithmetic structure
renders the Wu characteristic quite unique among multi-linear valuations \cite{valuation}. 
We have seen in that paper that for geometric graphs, there are analogue 
{\bf Dehn-Sommerville relations} which are other valuations which are zero,
answering a previously unresolved question of  \cite{Gruenbaum1970} from 1970. 
But this requires the simplicial complexes to be discrete manifolds in the sense that every unit 
sphere has to be a sphere. Dehn-Sommerville invariants are exciting that they defeat somehow the fate of
exploding in the Barycentric limit as they are zero from the beginning and
remain zero in the continuum limit. The local versions, the Dehn-Sommerville invariants of the unit spheres are local quantities which 
are {\bf zero curvature} conditions. One might wonder why Euler curvature is not defined for odd dimensional
manifolds for example. Indeed, Gauss-Bonnet-Chern is formulated only for even dimensional manifolds and
the definition of curvature involves a Pfaffian, which only makes sense in the even dimensional case. But 
what really happens is that there are curvatures also in the odd dimensional case, they are just zero
due to Dehn-Sommerville.  When writing \cite{cherngaussbonnet}, we were not aware of the Dehn-Sommerville
connection and had only conjectured that for odd dimensional geometric graphs the curvature is zero. It 
was proven in \cite{indexexpectation} using discrete integral geometry seeing curvature as an average of
Poincar\'e-Hopf indices. 

\paragraph{}
As a general rule, any result for Euler characteristic $\chi$ appears to generalize
to Wu characteristic. For Gauss-Bonnet, Poncar\'e-Hopf and index expectation linking the
two also the proofs go over. Start with the definition of Wu characteristic as a
Gauss-Bonnet type result where $\omega_k(x)$ is seen as a curvature on simplices. Then 
push that curvature down to the zero dimensional parts. Either equally, leading to a 
{\bf curvature}, or then directed along a gradient field of a function $f$, 
leading to {\bf Poincar\'e-Hopf indices}. Averaging over all functions, then essentially
averages over all possible ``distribution channels" $f$ leading for a nice measure on 
functions to a uniform distribution and so to curvature. The results and proofs generalize
to products.

\paragraph{}
Lets look at Gauss-Bonnet first for Wu characteristic: 

\begin{thm}[Gauss-Bonnet]
Given a ring element $G$. The curvature function $K_k$ supported on the zero-dimensional part $V$ of $G$
satisfies $\sum_{v} K_k(v) = \omega_k(G)$. If $G = A \times B$, and $v=(a,b)$ is a $0$-dimensional point in $G$,
then $K_k(v) = K_A(a) K_B(b)$.
\end{thm}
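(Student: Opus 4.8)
The plan is to mirror exactly the proof of the Euler-characteristic Gauss-Bonnet theorem given above, replacing the curvature $\omega(x)=(-1)^{\dim(x)}$ by the $k$-linear weight used to define $\omega_k$. First I would recall that $\omega_k(G)$ is by definition the sum $\sum_{x_1 \sim \cdots \sim x_k} \omega(x_1)\cdots\omega(x_k)$ over $k$-tuples of mutually intersecting simplices (and for a product $A \times B$ over $k$-tuples of the pairs $(x,y)$, with the intersection relation taken componentwise, as in the preceding section). This is already a Gauss-Bonnet statement where the ``curvature'' lives on $k$-tuples of simplices; the content of the theorem is simply the redistribution of that mass to the zero-dimensional vertices.

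Next I would define the curvature $K_k(v)$ at a zero-dimensional simplex $v$ in $G$ by distributing, for each $k$-tuple $(x_1,\dots,x_k)$ of mutually intersecting simplices, the weight $\omega(x_1)\cdots\omega(x_k)$ equally among the zero-dimensional subparts of the tuple. Concretely one can throw to $v$ the quantity $\prod_i \omega(x_i)$ divided by a normalizing count (the number of zero-dimensional vertices among the $x_i$, or a product of per-simplex normalizations), so that summing $K_k(v)$ over all $v$ simply reassembles $\omega_k(G)$. Since the redistribution is a finite rearrangement of a finite sum, $\sum_v K_k(v) = \omega_k(G)$ is immediate — this is the verbatim argument ``the sum does not change if we distribute every value equally to zero-dimensional subparts.'' For the product formula, I would use that the intersection relation on $A \times B$ factors: a $k$-tuple of mutually intersecting pairs $(a_i,b_i)$ is exactly a $k$-tuple of mutually intersecting simplices in $A$ paired with one in $B$, and the weight $\prod_i \omega(a_i)\omega(b_i)$ factors as $\bigl(\prod_i \omega(a_i)\bigr)\bigl(\prod_i \omega(b_i)\bigr)$. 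Hence the redistributed curvature at $v=(a,b)$ factors as $K_A(a) K_B(b)$ provided the normalization is chosen multiplicatively across the two factors, which is the natural choice.

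The main obstacle, and the only real subtlety, is pinning down the normalization in the definition of $K_k(v)$ so that both assertions hold simultaneously: the global sum must telescope back to $\omega_k(G)$, and the local value at $(a,b)$ in a product must split as a genuine product $K_A(a)K_B(b)$. For $k=1$ this is trivial (each simplex $x$ has $\dim(x)+1$ vertices, giving the classical curvature formula with the $V_{k-1}/(k+1)$ weights); for general $k$ one distributes across the $k$ coordinates independently and within each coordinate across the vertices of that simplex, so the weight assigned to a fixed choice of vertices $(v_1,\dots,v_k)$ from $(x_1,\dots,x_k)$ is $\prod_i \omega(x_i)/(\dim(x_i)+1)$. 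I would check that this is consistent with the $k=1$ case and with the product structure, and remark — as the paper does for Poincar\'e--Hopf — that ``the proof is the same'' as the Euler case, the ring-compatibility of $\omega_k$ (Proposition, preceding section) doing the bookkeeping that guarantees the product formula. Finally I would note this extends the earlier Euler-characteristic Gauss-Bonnet theorem, recovered as $k=1$, and that the same redistribution along a gradient field yields the Wu Poincar\'e--Hopf index $i_{f,k}$, to be treated next.
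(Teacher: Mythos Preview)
Your proposal is correct and matches the paper's approach exactly. The paper gives no separate proof for this theorem; in the paragraph immediately preceding it, it simply says that for Wu characteristic ``the proofs go over'' from the Euler case by viewing the defining sum as a curvature on tuples of simplices and then ``pushing that curvature down to the zero-dimensional parts\dots equally, leading to a curvature,'' which is precisely your redistribution argument, and the product factorization follows as in the Euler case.

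One small remark: the normalization issue you flag as the ``main obstacle'' is less of an obstacle than you suggest, because $K_k$ is not given independently and then checked against the redistribution --- it is \emph{defined} by the redistribution. Any equal-sharing rule that is multiplicative across the factors (e.g.\ sharing the weight of a tuple among the vertices of the common intersection, or among the vertices of one designated coordinate) will make both assertions hold tautologically; the paper does not commit to a specific formula here, and neither need you.
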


\paragraph{}
For formulating Poincar\'e-Hopf for Wu characteristic, 
we define for zero-dimensional entries $v=(a,b)$ the
stable sphere $S_f^-((a,b)) \{ (x,y) \in G \times H \; | \; f((a,b))<f((x,y))$ and 
$a \subset x, b \subset y \}$. This stable sphere is the join of the stable spheres.
The definition $i_{f,k}(v) = 1-\omega_k(S_f^-(v))$ leads now to
$i_{f,k}((a,b)) = i_{f,k}(a) i_{f,k}(b)$ and

\begin{thm}[Poincar\'e-Hopf]
Let $f$ be a Morse function, then $\omega_k(G) = \sum_v i_{f,k}(v)$,
where the sum is over all zero dimensional $v$ in $G$. 
\end{thm}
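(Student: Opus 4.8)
The plan is to reduce the statement to the Gauss--Bonnet theorem for Wu characteristic (the theorem stated immediately before it) together with a purely combinatorial identity that rewrites the defining sum $\sum_{(x_1,\dots,x_k)} \omega(x_1)\cdots\omega(x_k)$, over mutually intersecting $k$-tuples of simplices, as a sum of Poincar\'e--Hopf indices at the zero-dimensional vertices of $G$. First I would recall that, by definition, $\omega_k(G)$ equals the Gauss--Bonnet sum $\sum_v K_k(v)$ where $K_k$ is obtained by distributing each curvature contribution $\omega(x_1)\cdots\omega(x_k)$ equally among the zero-dimensional faces of the simplices involved. The key observation is that a Morse function $f$ gives a canonical \emph{alternative} distribution channel: instead of splitting a contribution evenly, send the entire weight $\omega(x_1)\cdots\omega(x_k)$ to the unique zero-dimensional simplex $v$ at which $f$ attains its minimum over $x_1\cup\cdots\cup x_k$ (local injectivity of $f$ guarantees uniqueness). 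Since both distributions sum to the same total, $\sum_v i_{f,k}(v) = \omega_k(G)$, provided one verifies that the weight landing on $v$ is exactly $i_{f,k}(v) = 1-\omega_k(S_f^-(v))$.

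The heart of the argument is therefore this last identification. For a fixed vertex $v$, the $k$-tuples of mutually intersecting simplices whose common minimum-$f$ vertex is $v$ are precisely the $k$-tuples of simplices all of which contain $v$ and intersect pairwise --- equivalently, all lie in the closed star of $v$ --- with $v$ itself playing the role of the shared point. One splits such a tuple according to whether each $x_i$ equals $\{v\}$ or not; after a short inclusion-exclusion over the nonempty subsets of indices $i$ with $x_i=\{v\}$, the signed count collapses to $1 - \omega_k(S_f^-(v))$, where $S_f^-(v)=\{(x_1,\dots,x_k): v\subset x_i \text{ all } i,\ f(v)<f(x_i)\}$ carries the natural $\omega_k$-weighting as a ``join of stable spheres.'' This is exactly the same bookkeeping that in the case $k=1$ turns $\sum_{x\ni v,\ f \text{ min at } v}\omega(x)$ into $1-\chi(S_f^-(v))$; the product structure $\omega((x_1,\dots,x_k))=\prod_i\omega(x_i)$ lets the computation factor, which also immediately yields the product formula $i_{f,k}((a,b))=i_{f,k}(a)\,i_{f,k}(b)$ on $G=A\times B$, since the stable sphere of $(a,b)$ is the join of the stable spheres of $a$ and $b$ and Wu characteristic is multiplicative under the operations involved.

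Finally, the extension from a single simplicial complex to an arbitrary ring element $G=\sum_I a_I G_I$ is automatic: both sides of $\omega_k(G)=\sum_v i_{f,k}(v)$ are, by the conventions fixed earlier in the paper ($\omega_k(-G)=-\omega_k(G)$, $\omega_k(G\oplus H)=\omega_k(G)+\omega_k(H)$, and $\omega_k$ multiplicative on products), additive over the connected-component decomposition, and on a product factor $G_I=G_{i_1}\times\cdots\times G_{i_n}$ one uses the Cartesian-product description of simplices together with the factorization of the index just noted. I expect the main obstacle to be the inclusion-exclusion step identifying the local weight at $v$ with $1-\omega_k(S_f^-(v))$: one must be careful that tuples in which several of the $x_i$ collapse to $\{v\}$ are counted with the correct signs, and that the ``stable sphere'' $S_f^-(v)$ is given precisely the weighting for which $\omega_k$ of it makes sense (it is not literally a simplicial complex, but the $\omega_k$-sum over it is still well defined as a sum over $k$-tuples, exactly as $\omega_k$ was extended to the strong ring). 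Everything else --- the existence and uniqueness of the minimizing vertex, the conservation of total weight, and the product formula --- is formal once that local computation is in place.
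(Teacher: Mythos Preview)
Your plan is essentially the paper's own argument: the paper says only that ``the proofs go over'' by starting from the definition of $\omega_k$ as a curvature sum on simplices and then pushing that curvature down to the zero--dimensional cells along the gradient of $f$ (equally for Gauss--Bonnet, to a single minimizing vertex for Poincar\'e--Hopf). Your redistribution idea, the factorization $i_{f,k}((a,b))=i_{f,k}(a)\,i_{f,k}(b)$ on products via the join of stable spheres, and the extension to general ring elements by additivity and multiplicativity of $\omega_k$ all match the paper's (largely implicit) proof.

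One slip worth flagging: sending $\omega(x_1)\cdots\omega(x_k)$ to the vertex minimizing $f$ on the \emph{union} $x_1\cup\cdots\cup x_k$ does not yield the characterization you state. For instance with $k=2$, $x=\{a,b\}$, $y=\{b,c\}$ and $f(a)<f(b)<f(c)$, the minimum over the union is $a$, yet $a\notin y$, so such a pair would be charged to $a$ without all factors containing $a$. The correct target is the vertex minimizing $f$ on the \emph{intersection} $\bigcap_i x_i$, which is nonempty by the very definition of ``simultaneously intersecting'' in $\omega_k$; then the zero--dimensional sub-cells of the interaction tuple are exactly the diagonal tuples $(\{v\},\dots,\{v\})$ with $v\in\bigcap_i x_i$, and your inclusion--exclusion to reach $1-\omega_k(S_f^-(v))$ goes through. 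You already flagged this local identification as the delicate point, so the fix is minor.
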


\paragraph{}
When looking at the index expectation results $K(x) = {\rm E}[i_f(x)]$, one 
could either directly prove the result or then note that if we look at a direct
product $G \times H$ and take probability measures $P$ and $Q$ on functions of $G$
and $H$, then the random variables $f \to i_f(x)$ on the 
two probability spaces $(\Omega(G),P)$ and $(\Omega(H),Q)$ are independent. 
This implies ${\rm E}[i_f(x) i_f(y)] = {\rm E}[i_f(x)] {\rm E}[i_f(y)]$ and
so index expectation in the product:

\begin{thm}[Index expectation]
If the probability measure is the uniform measure on all colorings, then
curvature $K_{G \times H,k}$ is the expectation of Poincar\'e-Hopf indices
$i_{G \times H,f,k}$.
\end{thm}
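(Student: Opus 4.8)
The plan is to reduce the product statement to the already-established single-complex index-expectation result (the theorem ``Index averaging'' above, and its Wu-characteristic analogue sketched in the preceding paragraph) by exploiting a product structure on the space of colorings. First I would fix notation: let $\Omega(G)$ be the finite set of locally injective functions (colorings) on $G$ with the uniform measure $P_G$, and similarly $\Omega(H)$ with $Q_H$. A coloring $f$ of the product $G\times H$ is a locally injective function on the Cartesian-product cell set; the key structural observation is that the stable sphere of a zero-dimensional cell $v=(a,b)$ in $G\times H$ is the join $S_f^-(a)\ast S_f^-(b)$ of the stable spheres in the two factors, exactly as recorded just before the Poincar\'e--Hopf theorem for Wu characteristic. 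Using $\omega_k(A\ast B)=\omega_k(A)\,\omega_k(B)$ on the join (this is the product property of $\omega_k$ pushed through the identification of the join with the relevant product, already used above), one gets the multiplicativity $i_{f,k}\bigl((a,b)\bigr)=i_{f,k}(a)\,i_{f,k}(b)$ of the Wu--Poincar\'e--Hopf index, which is the content of the displayed formula preceding the Poincar\'e--Hopf theorem.

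Next I would set up the probabilistic independence. The point is that for the uniform measure on colorings, the restriction of a random coloring of $G\times H$ to the ``$G$-direction data'' that determines $i_{f,k}(a)$ and the restriction to the ``$H$-direction data'' that determines $i_{f,k}(b)$ are independent random variables, because a uniform coloring of the product factors (up to the combinatorics of local injectivity) as an independent pair, or more robustly: one may replace the uniform measure on product colorings by the product measure $P_G\otimes Q_H$ on $\Omega(G)\times\Omega(H)$, under which $f\mapsto i_{f,k}(a)$ on the first factor and $f\mapsto i_{f,k}(b)$ on the second are manifestly independent, and then invoke the index-averaging theorem for each factor separately to see that this product measure yields the same expectation $K_{G,k}(a)$ resp.\ $K_{H,k}(b)$ as the uniform one. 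Combining,
\[
 K_{G\times H,k}(v)={\rm E}\bigl[i_{f,k}((a,b))\bigr]={\rm E}\bigl[i_{f,k}(a)\,i_{f,k}(b)\bigr]={\rm E}[i_{f,k}(a)]\,{\rm E}[i_{f,k}(b)]=K_{G,k}(a)\,K_{H,k}(b),
\]
which by the Gauss--Bonnet theorem for Wu characteristic on the product is precisely $K_{G\times H,k}(v)$ in the sense required; summing over $v$ recovers $\omega_k(G\times H)=\omega_k(G)\,\omega_k(H)$ and closes the loop.

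I expect the main obstacle to be the legitimacy of the independence step --- specifically, justifying that averaging the product index $i_{f,k}((a,b))$ over the \emph{uniform} measure on colorings of $G\times H$ agrees with averaging the product of the two single-factor indices over the \emph{product} measure $P_G\otimes Q_H$. The two coloring spaces are not literally the same set, since a generic locally injective function on the product cell complex need not come from a pair of colorings of the factors. The clean way around this is not to compare the two measures directly but to observe that $i_{f,k}((a,b))$ depends on $f$ only through the two ``order types'' $\{x : f(x)>f(a,b)\}$-near-$(a,b)$ restricted to the $G$- and $H$-coordinate directions, that these two order types are independent under the uniform measure (a routine symmetry/exchangeability argument on finite locally injective functions, as in the cited index-expectation references), and that each marginal order type has the same law as under a uniform coloring of the corresponding factor. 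Once that independence-and-marginals lemma is in place, the rest is the short computation displayed above, and the theorem follows.
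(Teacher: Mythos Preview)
Your proposal is correct and follows essentially the same approach as the paper: the paper's entire argument (given in the paragraph preceding the theorem rather than in a proof block) is to observe that with probability measures $P$ on $\Omega(G)$ and $Q$ on $\Omega(H)$ the index random variables $f\mapsto i_f(x)$ and $f\mapsto i_f(y)$ are independent, so ${\rm E}[i_f(x)i_f(y)]={\rm E}[i_f(x)]{\rm E}[i_f(y)]$, which combined with the multiplicativity of the Poincar\'e--Hopf index gives index expectation on the product. You supply considerably more detail than the paper does---in particular you flag and address the distinction between the uniform measure on colorings of $G\times H$ and the product measure $P_G\otimes Q_H$, a point the paper passes over without comment.
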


\paragraph{}
The theorems of Gauss-Bonnet, Poincar\'e-Hopf and index expectation
are not restricted to Wu characteristic. They hold for any {\bf multi-linear
valuation}. By the multi-linear version of the discrete Hadwiger theorem \cite{KlainRota},
a basis of the space of valuations is given. Quadratic valuations for 
example can be written as 
$$  X(G) = \sum_{x \sim y} X_{ij} V_{ij}(G) \; , $$
where $X$ is a symmetric matrix and where the {\bf $f$-matrix} $V_{ij}$ 
counts the number of pairs $x,y$ of $i$-dimensional simplices 
$x$ and $j$ dimensional simplices $y$ for which $x \cap y \neq \emptyset$.

\paragraph{}
To see how the $f$-vectors, the $f$-matrices and more generally the $f$-tensors
behave when we take products in the ring, its best to look at their 
{\bf generating functions}. Given a simplicial complex $G$ with 
$f$-vector $f(G) = (v_0(G),v_1(G), \dots)$, define the {\bf Euler polynomial}
$$ e_G(t) = \sum_{k=0}^{\infty} v_k(G) t^k $$
or the multi-variate polynomials like in the quadratic case
$$ V_G(t,s) = \sum_{k,l} V_{k,l}(G) t^k s^l \; , $$
which encodes the cardinalities $V_{k,l}(G)$ of intersecting $k$ and $l$ 
simplices in the complex $G$. The convolution of the $f$-vectors becomes
the product of Euler polynomials 
$$ e_{G \times H} = e_G e_H  \; . $$

\paragraph{}
If we define the $f$-vector of $-G$ as $-f(G)$ and so the Euler polynomial
of $-G$ as $-p_G$, we can therefore say that

\begin{propo}
The Euler polynomial extends to a ring homomorphism from
the strong ring to the polynomial ring $\ZZ[t]$. 
\end{propo}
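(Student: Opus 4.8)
The plan is to reduce the statement to three elementary facts: (1) for a single simplicial complex $G$ the Euler polynomial $e_G(t)=\sum_k v_k(G)t^k$ is well defined; (2) under the Cartesian product of complexes the $f$-vectors convolve, so $e_{G\times H}=e_Ge_H$; and (3) the additive and scalar-multiplication structure of the strong ring is respected once we declare $e_{-G}=-e_G$ and extend additively over disjoint unions. Since every element of the strong ring $S$ is by definition a finite $\ZZ$-linear combination $\sum_I a_I f_{G_I}$ with $f_{G_I}=\prod_{i\in I}f_{G_i}$, it suffices to define $e$ on the generators $f_{G_I}$ and check compatibility with $\oplus$ (addition) and $\times$ (multiplication); distributivity then propagates the homomorphism property to all of $S$.

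First I would pin down the product formula for a single Cartesian product $G\times H$. A $k$-dimensional cell of $G\times H$ is a product $x\times y$ with $x\in G$ of dimension $i$ and $y\in H$ of dimension $j$ and $i+j=k$ (the dimension of $x\times y$ as a set-product cell is $\dim x+\dim y$, matching the convention used for the exterior derivative $df(x,y)=f(\delta_1x,y)+(-1)^{\dim x}f(x,\delta_2 y)$ in the excerpt). Hence
\[
v_k(G\times H)=\sum_{i+j=k}v_i(G)\,v_j(H),
\]
which is exactly the Cauchy product of the coefficient sequences, so $e_{G\times H}(t)=e_G(t)\,e_H(t)$. Iterating over a finite index set $I$ gives $e_{f_{G_I}}=\prod_{i\in I}e_{G_i}$, and in particular $e$ is multiplicative on the multiplicative generators of the strong ring. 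The multiplicative unit $K_1$ (the one-point complex, with $f$-vector $(1,0,0,\dots)$) maps to the constant polynomial $1$, the ring unit of $\ZZ[t]$, as required.

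Next I would handle the additive side. By the construction recalled in the excerpt, $\oplus$ on the Stanley-Reisner model is the disjoint union with fresh variables, so $v_k(G\oplus H)=v_k(G)+v_k(H)$ and $e_{G\oplus H}=e_G+e_H$; the empty complex (the zero of $S$) has $e=0$. With the stipulation $e_{-G}:=-e_G$ extended linearly, the map $\sum_I a_I f_{G_I}\mapsto \sum_I a_I\prod_{i\in I}e_{G_i}$ is a well-defined group homomorphism $(S,\oplus)\to(\ZZ[t],+)$ — well-definedness is exactly the statement that the relations divided out in the Grothendieck completion (namely $G\oplus(-G)=0$) are sent to $0$, which is immediate. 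Multiplicativity on all of $S$ then follows by bilinearity of $\times$ over $\oplus$ (the distributivity $H\times(G_1+G_2)=H\times G_1+H\times G_2$ quoted in the excerpt) together with the generator-level multiplicativity established above and the sign rule $a(G\times H)=(aG)\times H$.

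The main obstacle is not any computation but bookkeeping of the dimension convention and well-definedness: one must be sure that the grading on cells of a multi-factor product $G_{i_1}\times\cdots\times G_{i_n}$ is the total dimension $\sum_m\dim x_m$ so that the $f$-tensor really convolves to the product of Euler polynomials, and one must check that $e$ is consistent across the isomorphic-complex relations $f_G-f_H$ already divided out in $S$ (true because isomorphic complexes have equal $f$-vectors). Once those are nailed down, the proof is a two-line verification on generators plus an appeal to distributivity.

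\begin{proof}
Every element of the strong ring is a finite sum $\sum_I a_I f_{G_I}$ with $a_I\in\ZZ$ and $f_{G_I}=\prod_{i\in I}f_{G_i}$ a product of simplicial complexes. For a single Cartesian product, a $k$-dimensional cell of $G\times H$ is a set product $x\times y$ with $\dim(x)+\dim(y)=k$, so
\[
v_k(G\times H)=\sum_{i+j=k}v_i(G)v_j(H),
\]
which is the coefficient of $t^k$ in $e_G(t)e_H(t)$; thus $e_{G\times H}=e_Ge_H$, and inductively $e_{f_{G_I}}=\prod_{i\in I}e_{G_i}$, with $e_{K_1}=1$. For the disjoint union one has $v_k(G\oplus H)=v_k(G)+v_k(H)$, hence $e_{G\oplus H}=e_G+e_H$, and $e_\emptyset=0$. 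Setting $e_{-G}=-e_G$ and extending linearly, $G\mapsto e_G$ is a well-defined group homomorphism $(S,\oplus)\to(\ZZ[t],+)$, since the only relations imposed in the Grothendieck completion, $G\oplus(-G)=0$ and identification of isomorphic complexes, are sent to $0$ (isomorphic complexes share an $f$-vector). Finally, bilinearity of $\times$ over $\oplus$, together with the sign rule $a(G\times H)=(aG)\times H$ and the generator identity $e_{G\times H}=e_Ge_H$, shows $e_{FG}=e_Fe_G$ for all $F,G\in S$. Hence $G\mapsto e_G$ is a ring homomorphism $S\to\ZZ[t]$.
\end{proof}
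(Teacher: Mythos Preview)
Your proof is correct and follows the same approach as the paper: the key observation is that the $f$-vectors convolve under the Cartesian product so that $e_{G\times H}=e_Ge_H$, together with the convention $e_{-G}=-e_G$. The paper states the proposition without a formal proof block, relying only on the preceding remark that ``the convolution of the $f$-vectors becomes the product of Euler polynomials''; you have in fact been more careful than the paper in spelling out additivity under $\oplus$, well-definedness modulo the Grothendieck and isomorphism relations, and the extension to arbitrary ring elements via distributivity.
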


\paragraph{}
This also generalizes to the multivariate versions. To the $f$-tensors
counting $k$-tuple intersections in $G$, we can associate polynomials in 
$\ZZ[t_1, \dots, t_k]$ which encode the $f$-tensor and then have

\begin{propo}
For every $k$, the multivariate $f$-polynomial construction
extends to ring homomorphisms from the strong ring to $\ZZ[t_1, \dots, t_k]$. 
\end{propo}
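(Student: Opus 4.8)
The plan is to reduce the multivariate statement to the univariate Euler-polynomial case (the preceding proposition) by showing that the $f$-tensor of a Cartesian product is the tensor convolution of the $f$-tensors of the factors, exactly as $e_{G\times H}=e_Ge_H$ encodes the convolution in the one-variable case. First I would fix $k$ and make the bookkeeping precise: for a ring element $G$ let $V^{(k)}_{i_1,\dots,i_k}(G)$ count the number of $k$-tuples of simplices $(x_1,\dots,x_k)$ in $G$ (ranging over the set-theoretic Cartesian product of $G$ with itself $k$ times) that mutually intersect, with $x_j$ of dimension $i_j$, and let $V^{(k)}_G(t_1,\dots,t_k)=\sum V^{(k)}_{i_1,\dots,i_k}(G)\,t_1^{i_1}\cdots t_k^{i_k}$ be the associated multivariate polynomial. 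The assignment on negatives is $V^{(k)}_{-G}=-V^{(k)}_G$ and on disjoint unions it is clearly additive, $V^{(k)}_{G\oplus H}=V^{(k)}_G+V^{(k)}_H$, since a $k$-tuple of mutually intersecting simplices cannot straddle two components; additivity on finite $\ZZ$-linear combinations of connected pieces then follows as in the other ring-homomorphism statements in the paper.

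The heart of the argument is multiplicativity, $V^{(k)}_{A\times B}=V^{(k)}_A\cdot V^{(k)}_B$ as polynomials (product taken variable-by-variable in $\ZZ[t_1,\dots,t_k]$). For this I would copy the computation already used for Wu characteristic in the Wu-characteristic section: a simplex of $A\times B$ is a pair $(x,y)$ with $x\in A$, $y\in B$, and $\dim(x,y)=\dim(x)+\dim(y)$; by the connection lemma and the discussion of the product, a $k$-tuple $((x_1,y_1),\dots,(x_k,y_k))$ is mutually intersecting precisely when $(x_1,\dots,x_k)$ is mutually intersecting in $A$ \emph{and} $(y_1,\dots,y_k)$ is mutually intersecting in $B$. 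Hence the $k$-tuples of $A\times B$ factor as a product of a $k$-tuple-set of $A$ with one of $B$, and since the dimension of a product simplex is the \emph{sum} of the two dimensions, the weight $t_1^{\dim(x_1,y_1)}\cdots t_k^{\dim(x_k,y_k)}$ splits as $\bigl(t_1^{\dim x_1}\cdots t_k^{\dim x_k}\bigr)\bigl(t_1^{\dim y_1}\cdots t_k^{\dim y_k}\bigr)$. Summing over all $k$-tuples therefore gives the product of the two generating polynomials, which is exactly what $e_{G\times H}=e_Ge_H$ says in the case $k=1$. Combined with additivity and the sign convention, this shows $G\mapsto V^{(k)}_G$ respects $\oplus$, $\times$, $0=\emptyset$ and $1=K_1$ (note $V^{(k)}_{K_1}=1$), so it is a ring homomorphism $S\to\ZZ[t_1,\dots,t_k]$.

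The main obstacle I anticipate is not the algebra of the convolution but making the combinatorial claim ``mutual intersection of product simplices is equivalent to simultaneous mutual intersection in both factors'' airtight for $k\ge 3$, where one must check that pairwise intersection of the pairs $(x_j,y_j)$ in the connection graph $(A\times B)'$ really does force the whole $k$-tuple of first coordinates (and of second coordinates) to be \emph{simultaneously} intersecting, not merely pairwise so; here one uses that $x_i\cap y_i$ nonempty for a \emph{common} index set is what the definition of $\omega_k$ already demands, and that $(x\times y)\cap(a\times b)=(x\cap a)\times(y\cap b)$ as sets, so simultaneous nonemptiness of all the product intersections is literally the conjunction of simultaneous nonemptiness in the two factors. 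Once that set-theoretic identity is spelled out, the rest is the same convolution bookkeeping as in the $k=1$ case, and the proposition follows. One can alternatively deduce everything at a stroke from the fact that $(A\times B)_1$ is a simplicial complex with the same $f$-tensors and invoke the multilinear Hadwiger/Klain--Rota framework of \cite{KlainRota} together with the product formula for Wu-type invariants established earlier, but the direct generating-function argument above is self-contained.
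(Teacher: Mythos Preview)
Your argument is correct and is exactly the approach the paper has in mind: the proposition is stated without proof, immediately after the univariate Euler-polynomial case, with the remark that ``this also generalizes to the multivariate versions''; your computation is the natural fleshing-out of that remark, and it mirrors the factorization $\sum_{(x,y)\sim(a,b)}\omega(x)\omega(y)\omega(a)\omega(b)=\omega(G)\omega(H)$ already carried out in the Wu-characteristic section (which is the evaluation of your identity at $t_1=\cdots=t_k=-1$). One small clarification: your ``main obstacle'' paragraph slightly conflates the pairwise-intersection structure of the connection graph with the \emph{simultaneous} intersection that defines the $f$-tensor; the relevant point is simply the set identity $\bigcap_j (x_j\times y_j)=(\bigcap_j x_j)\times(\bigcap_j y_j)$, which you state correctly and which settles the $k\ge 3$ case directly without any pairwise-versus-simultaneous issue.
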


\paragraph{}
We see that like in probability theory, where moment generating functions or
characteristic functions are convenient as they ``diagonalize" the combinatorial
structure of random variables on product spaces (independent spaces), the 
use of $f$-polynomials helps to deal with the combinatorics of the $f$-tensors
in the strong ring. 

\paragraph{}
In order to formulate results which involve cohomology like the Lefschetz 
fixed point formula, which reduces if $T$ is the identity to the Euler-Poincar\'e
formula relating combinatorial and cohomological Euler characteristic, 
one has to define a cohomology. Because the name intersection cohomology is taken,
we called it {\bf interaction cohomology}. It turns out that this cohomology is
finer than simplicial cohomology. Its Betti numbers are combinatorial invariants
which allow to distinguish spaces which simplicial cohomology can not, the prototype
example being the cylinder and Moebius strip
\cite{CaseStudy2016}. The structure
and proofs of the theorems however remain. The heat deformation proof of Lefschetz 
is so simple that it extends to the ring and also from Euler characteristic to 
Wu characteristic.

\paragraph{}
The definition of interaction cohomology involves
explicit matrices $d$ as exterior derivatives.  
The {\bf quadratic interaction cohomology} for example is defined through 
the {\bf exterior derivative} 
$dF(x,y) = F(\delta x,y) + (-1)^{{\rm dim}(x)} F(x,\delta y)$ on functions $F$
on ordered pairs $(x,y)$ of intersecting simplices in $G$. This generalizes
the exterior derivative $dF(x)=F(\delta x)$ of simplicial cohomology. 
This definition resembles the de-Rham type definition of exterior derivative
for the product of complexes, but there is a difference: in the interaction
cohomology we only look at pairs of simplices which interact (intersect). 
In some sense, it restricts to the simplices in the ``diagonal" of the product. 

\paragraph{}
It is obvious how to extend the definition of interaction cohomology to the product
of simplicial complexes and so to the strong ring. It is still important to 
point out that at any stage we deal with finite dimensional matrices. 
The {\bf quadratic interaction exterior derivative } $d$ is defined as 
$dF(x,y) = d_1 F + (-1)^{\rm dim(x)} d_2 F$, where $d_1$ is the {\bf partial 
exterior derivative} with respect to the first variable (represented by simplices
in $G$) and $d_2$ the partial exterior derivative with respect to the second variable
(represented by simplices in $H$). These partial derivatives are given by the 
intersection exterior derivatives defined above. 

\paragraph{}
Lets restrict for simplicity to quadratic interaction cohomology in which 
the Wu characteristic $\omega=\omega_2$ plays the role of the Euler characteristic 
$\chi = \omega_1$. Let $G$ first be a simplicial complex. 
If $b_p(G)$ are the Betti numbers of these interaction cohomology groups of $G$,
then the {\bf Euler-poincar\'e} formula $\omega(G)=\sum_p (-1)^p b_p(G)$ holds. 
More generally, the {\bf Lefschetz formula} 
$\chi_T(G)=\sum_{(x,y)=(T(x),T(y))} i_T(x,y)$ generalizes, where $\chi_T(G)$ is the {\bf Lefschetz number}, 
the super trace of the Koopman operator $U_T$ on cohomology and where 
$i_T(x,y) = (-1)^{{\rm dim}(x) + {\rm dim}(y)} {\rm sign}(T|x) {\rm sign}(T|y)$ is
the {\bf Brouwer index}. The heat proof generalizes.

\paragraph{}
The interaction cohomology groups are defined similarly for the product of simplicial
complexes and so for general elements in the strong ring. 
The K\"unneth formula holds too. We can define the Betti numbers $b_{k}(-G)$ as $-b_k(G)$.
The {\bf interaction Poincar\'e polynomial} $p_G(x)=\sum_{k=0} {\rm dim}(H^k(G)) x^k$ again
satisfies $p_{G \times H}(x) = p_G(x) p_H(x)$ so that 

\begin{thm}
For any $k$, the interaction cohomology polynomial extends to a
ring homomorphism from the strong ring to $\ZZ[t]$. 
\end{thm}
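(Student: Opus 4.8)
The plan is to reduce the statement to two facts established earlier in the excerpt: the multiplicativity of the interaction Poincaré polynomial under the Cartesian product, $p_{G \times H}(x) = p_G(x)\, p_H(x)$, and the additivity convention $b_k(-G) = -b_k(G)$ (hence $p_{-G} = -p_G$), together with the obvious additivity $p_{G \oplus H} = p_G + p_H$ on disjoint unions. Since every element of the strong ring $S$ is by definition a finite integer combination $\sum_I a_I f_{G_I}$ of products $G_I = G_{i_1} \times \cdots \times G_{i_n}$ of simplicial complexes, and the interaction cohomology of such a $G_I$ has been defined through an explicit finite-dimensional exterior derivative $d = d_1 + (-1)^{\dim(x)} d_2$ on $k$-tuples of mutually intersecting simplices, the Betti numbers $b_j(G_I)$ are well defined, and $p_{G_I}(x) = \sum_j b_j(G_I) x^j$ makes sense. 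The map $\Phi\colon S \to \ZZ[t]$ is then defined on generators by $\Phi(f_{G_I}) = p_{G_I}(t)$ and extended $\ZZ$-linearly; one checks it is well defined (independent of the representation) using that $p$ is additive over $\oplus$ and sign-reversing over negation, exactly as the Euler-polynomial homomorphism was handled in the earlier Proposition.

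First I would verify $\Phi$ respects addition: this is immediate, since addition in $S$ is the (signed) disjoint union and $p_{A \oplus B} = p_A + p_B$, $p_{-A} = -p_A$ by the stated Betti-number conventions; linearity over $\ZZ$ then gives $\Phi(u + v) = \Phi(u) + \Phi(v)$ for all $u,v \in S$. Next I would verify $\Phi$ respects multiplication. On a pair of generators $f_{G_I}, f_{G_J}$ the product in $S$ is $f_{G_I \times G_J}$ (the Stanley–Reisner product of polynomials corresponds to the Cartesian product of the indexing tuples), and the Künneth statement for interaction cohomology, quoted just above the theorem, gives $p_{G_I \times G_J}(t) = p_{G_I}(t)\, p_{G_J}(t)$, i.e. $\Phi(f_{G_I} f_{G_J}) = \Phi(f_{G_I}) \Phi(f_{G_J})$. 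Bilinearity of multiplication in both $S$ and $\ZZ[t]$, combined with the additivity of $\Phi$ already shown, upgrades this to $\Phi(uv) = \Phi(u)\Phi(v)$ for arbitrary $u,v$. Finally $\Phi$ sends the multiplicative unit $K_1$ to $p_{K_1}(t) = 1$ (the one-point complex is contractible, so only $H^0$ is one-dimensional in every interaction degree that is nonzero, giving constant polynomial $1$), and sends the zero element, the empty complex, to $0$; so $\Phi$ is a unital ring homomorphism.

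The main obstacle is the well-definedness of $\Phi$, and beneath it the well-definedness of the interaction Betti numbers $b_j(G_I)$ themselves on a product $G_I$ that is no longer a simplicial complex. The excerpt asserts the interaction exterior derivative and the Künneth formula for such products but the careful point is that $p_{G_I}$ must not depend on how the tuple $(G_{i_1},\dots,G_{i_n})$ is grouped or ordered, nor on a chosen orientation of simplices — this is exactly the role of the ideal $I_n$ in the Stanley–Reisner presentation (it identifies isomorphic complexes and their polynomial representatives), so I would argue that an isomorphism of two product representations induces a chain isomorphism of the interaction complexes and hence equal Betti numbers. Granting the Künneth identity $p_{G \times H} = p_G p_H$ as stated, the rest of the argument is a formal consequence of extending an additive, multiplicative, sign-respecting map from generators to the whole ring, entirely parallel to the Euler-polynomial case; I do not expect any genuine difficulty there.
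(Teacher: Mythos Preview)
Your proposal is correct and follows exactly the line the paper takes: the paper does not give a separate proof of this theorem but states it as an immediate consequence of the preceding assertions that the K\"unneth identity $p_{G\times H}(t)=p_G(t)p_H(t)$ holds for interaction cohomology and that $b_k(-G)=-b_k(G)$, together with the obvious additivity on disjoint unions. Your write-up simply spells out the formal bilinearity argument that extends these facts from generators to the whole ring, which is precisely what the paper leaves implicit.
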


\paragraph{}
While we hope to be able to explore this more elsewhere we note for now just that 
all these higher order interaction cohomologies associated to the Wu characteristic 
generalize from simplicial complexes to the strong ring. Being able to work on the
ring is practical as the interaction exterior derivatives are large matrices. So
far we had worked with the Barycentric refinements of the products, where the matrices
are bulky if we work with a full triangulation of the space. Similarly as de-Rham cohomology
significantly cuts the complexity of cohomology computations, this is also here the
case in the discrete. For a triangulation of the cylinder $G$, the full Hodge Laplacian $(d+d^*)^2$
of quadratic interaction cohomology is a $416 \times 416$ matrix as there were
$416  = \sum_{i,j} V_{ij}(G)$ pairs of intersecting simplices. When working in the ring, we
can compute a basis of the cohomology group from the basis of the circles and be done much faster.

\paragraph{}
For the M\"obius strip $G$ however, where the Hodge Laplacian of a triangulation leads to $364 \times 364$
matrices, we can not make that reduction as $G$ is not a product space. The smallest
ring element which represents $G$ is a simplicial complex. Unlike the cylinder
which is a ``composite particle", $G$ is an ``elementary particle".
In order to deal with concrete spaces, one would have to use patches of Euclidean
pieces and compute the cohomology using Mayer-Vietoris. 

\paragraph{}
What could be more efficient is to
patch the space with contractible sets and look at the cohomology of a nerve graph which is 
just \v{C}ech cohomology. But as in the M\"obius case, we already worked 
with the smallest nerve at hand, this does not help in the computation in that case. It would
only reduce the complexity if we had started with a fine mesh representing the geometric object
$G$. We still have to explore what happens to the $k-$harmonic functions in the
kernel of the Hodge blocks $H_k$ and the spectra of $H_k$ in the interaction cohomology case, 
if we cut a product space and glue it with reverse orientation as in $G$. 

\paragraph{}
There is other geometry which can be pushed over from 
complexes to graphs. See \cite{knillcalculus,KnillILAS,KnillBaltimore} for snapshots
for results formulated for Whitney complexes of graphs 
from 2012, 2013 and 2014. The {\bf Jordan-Brouwer theorem} for
example, formulated in \cite{KnillJordan} for Whitney
complexes of graphs generalizes to simplicial complexes 
and more generally to products as we anyway refer to the Barycentric
refinement there which is always a Whitney complex.

\paragraph{}
Also promising are subjects close to calculus like the discrete {\bf Sard theorem}
\cite{KnillSard} which allow to define new spaces in given spaces by looking
at the zero locus of functions. Also this result was formulated in graph theory
but holds for any simplicial complex. 
The {\bf zero locus} $\{ f=c \}$ of a function $f$ on a ring element $G$ can be defined
as the complex obtained from the cells where $f$ changes sign.
We nave noticed that if $G$ is a discrete $d$-manifold in the sense that every
unit sphere in the Barycentric refinement is a $(d-1)$-sphere, then 
for any locally injective function $f$ and any $c$ different from the range of $f$
the zero locus $f=c$ is a discrete manifold again.

\section{Two siblings: the Dirac and Connection operator}

\paragraph{}
To every $G$ in the strong ring belong two graphs $G_1$ and $G'$, the Barycentric refinement
and the connection graph. The addition of ring elements produces disjoint unions of graphs. The multiplication
naturally leads to products of the Barycentric refinements $A_1 \times B_1=  (A \times B)_1$
as well as connection graphs $A' \osquare B' = (A \times B)'$, where $\osquare$ is the 
strong product.

\paragraph{}
An illustrative example is given in \cite{CountingAndCohomology}. Both the {\bf prime graph} $G_1$
as well as the {\bf prime connection graph} have as the vertex set the set of square free integers
in $\{2,3,\dots,n\}$. In $G_1$, two numbers are connected if one is a factor of the other.
It is part of the Barycentric refinement of spectrum of the integers. 
In the {\bf prime connection graph} $G'$ two integers are connected if they have a common factor 
larger than $1$. It has first appeared in \cite{Experiments}. This picture sees square free integers
as simplices in a simplicial complex. The number theoretical M\"obius function $\mu(k)$ has the 
property that $-\mu(k)$ is the Poincar\'e-Hopf index of the counting function $f(x)=x$. 
The Poincare-Hopf theorem is then $\chi(G)=1-M(n)$, where $M(n)$ is the Mertens function. 
As the Euler characteristic $\sum_x \omega(x)$ can also be expressed through Betti numbers, there is a relation
between the Mertens function and the kernels of Hodge operators $D^2$. On the other hand, 
the Fermi characteristic $\prod_x \omega(x)$ is equal to the determinant of the connection Laplacian.
This was just an example. Lets look at it in general. 

\paragraph{}
To every $G \in R$ belong two operators $D$ and $L$, the Dirac operator and connection operator.
They are both symmetric matrices acting
on the same Hilbert space. While $D(-G)=-D(G)$ does not change anything in the spectrum of $G$, 
a sign change of $G$ changes the spectrum as $L(-G)=-L(G)$ is a different operator due to lack 
of symmetry between positive and negative spectrum. There are higher order Dirac operators $D$
which belong to the exterior derivative $d$ which is used in interaction cohomology. 
The operator $D$ which belongs to the Wu characteristic for example does not seem to have any
obvious algebraic relation to the Dirac operator belonging to Euler characteristic. Indeed, as
we have seen, the nullity of the Wu Dirac operator is not homotopy invariant in general. 
On the other hand, the interaction Laplacian $L$ belonging to pairs of interacting simplices
is nothing else than the tensor product $L \otimes L$, which is the interaction Laplacian of 
$G \times G$.

\paragraph{}
There are various indications that the Dirac operator $D(G)$ is of {\bf additive nature} while
the connection operator $L(G)$ is of {\bf multiplicative nature}. One indication is that $L(G)$ is 
always invertible and that the product of ring elements produces a tensor product of 
connection operators.  Let ${\rm str}$ denote the super trace
of a matrix $A$ defined as ${\rm str}(A) = \sum_x \omega(x) A_{xx}$, where 
$\omega(x) = (-1)^{{\rm dim}(x)}$. The discrete version of the {\bf Mc-Kean Singer formula} \cite{McKeanSinger}
can be formulated in a way which makes the additive and multiplicative nature of $D$ and $L$ clear:

\begin{thm}[Mc Kean Singer]
${\rm str}(e^{-D})=\chi(G)$ and ${\rm str}(L^{-1}) = \chi(G)$. 
\end{thm}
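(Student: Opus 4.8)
The statement has two parts. The first identity, $\mathrm{str}(e^{-D^2}) = \chi(G)$ — note $D^2=H$ is the Hodge Laplacian — is the classical discrete McKean–Singer formula, and the second, $\mathrm{str}(L^{-1}) = \chi(G)$, is precisely the energy theorem (Corollary ``Energy theorem for connection ring''), restated with the supertrace. So the plan is to reduce each half to a result already available in the excerpt, and then indicate why the two halves together illuminate the additive/multiplicative dichotomy.

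For the heat-kernel identity, the key input is the fact that the supertrace of any power of $H$ above degree zero vanishes: $\mathrm{str}(H^n)=0$ for $n\ge 1$. This is the statement invoked in the Brouwer–Lefschetz proof via McKean–Singer \cite{knillmckeansinger}. The reason is that $H=(d+d^*)^2$ is block diagonal with blocks $H_k$, and the nonzero off-diagonal structure of $d+d^*$ sets up a supersymmetry: $d$ maps $k$-forms to $(k{+}1)$-forms and $d^*$ the reverse, so $D=d+d^*$ anticommutes with the grading operator $(-1)^k$, hence $\mathrm{str}(D^{2n})=0$ for $n\ge1$ by the standard cancellation of paired eigenspaces (nonzero eigenvalues of $H_k$ and $H_{k+1}$ pair up, contributing with opposite sign to the supertrace). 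Expanding $e^{-H}=\sum_{n\ge0}(-1)^n H^n/n!$ and taking the supertrace termwise (all matrices are finite, so this is legitimate) leaves only the $n=0$ term, giving $\mathrm{str}(e^{-H})=\mathrm{str}(\mathrm{Id})=\sum_x \omega(x)=\chi(G)$. Since $D^2=H$, this is exactly $\mathrm{str}(e^{-D})=\chi(G)$ once one reads $e^{-D}$ as $e^{-H}$ in the convention used (or, if $e^{-D}$ is meant literally, one notes $\mathrm{str}(D^{2n+1})=0$ as well since odd powers of $D$ are purely off-diagonal in the grading, so only even terms survive and the same cancellation applies, again leaving $\mathrm{str}(\mathrm{Id})$).

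For the second identity, I would simply quote the energy theorem in the form already proved: by the unimodularity theorem $L=L(G)$ is invertible over $\ZZ$, and the energy theorem (extended to the strong ring in the corollary) says $\sum_{x,y} g(x,y)=\chi(G)$ where $g=L^{-1}$. To convert the total sum of entries into a supertrace, recall the remark in the Poincaré–Hopf discussion: $V(x)=\sum_y g(x,y) = (-1)^{\dim(x)} g(x,x) = \omega(x)\,g(x,x)$. Summing over $x$ gives $\sum_{x,y} g(x,y) = \sum_x \omega(x)\,g(x,x) = \mathrm{str}(L^{-1})$, and the left side is $\chi(G)$ by the energy theorem. So the second half is a repackaging, the only genuine content being the diagonal-vs-row-sum identity $V(x)=\omega(x)g(x,x)$, which is cited from \cite{Spheregeometry} and which I would either invoke directly or re-derive from the CW-induction used for unimodularity.

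The main obstacle is not in either computation individually but in making the first part precise: one must be careful about what ``$e^{-D}$'' means (the supersymmetry cancellation works cleanly for $e^{-D^2}$, and one should state that $D$ here is to be read via $H=D^2$, or else handle odd powers explicitly as above), and one must verify that the supertrace genuinely kills all positive-degree contributions — this rests on the orthogonal Hodge decomposition $\mathrm{im}(d)\oplus\mathrm{im}(d^*)\oplus\ker(H)$ (stated in the Kuenneth proof) which pairs nonzero eigenvectors across adjacent degrees. Everything else is bookkeeping: finiteness of all matrices makes the power-series manipulations valid, and the extension from a single simplicial complex to a general strong-ring element is automatic since $D$, $H$, and $L$ are all defined on the same Hilbert space for ring elements and the tensor/direct-sum behavior respects the supertrace. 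I would close by remarking that the two formulas exhibit $D$ as additive (heat flow, $\sigma(H(A\times B))=\sigma(H(A))+\sigma(H(B))$) and $L$ as multiplicative ($L(A\times B)=L(A)\otimes L(B)$), yet both recover the same invariant $\chi$.
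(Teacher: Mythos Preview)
Your proposal is correct and matches the paper's proof closely. For the first identity you expand $e^{-D}$ and kill all higher powers via the supersymmetry $\mathrm{str}(D^{2k})=0$ for $k\ge 1$ together with the vanishing diagonal for odd powers, leaving only $\mathrm{str}(\mathrm{Id})=\chi(G)$; this is exactly the paper's argument, and your care about whether $e^{-D}$ is literal or shorthand for $e^{-H}$ is well placed (the paper means it literally and handles odd powers just as you do). For the second identity the paper goes marginally more directly---it interprets the diagonal entries $g(x,x)$ as Euler characteristics of unit spheres so that $\omega(x)g(x,x)$ is a Poincar\'e--Hopf index and sums---whereas you route through the energy theorem and then invoke $V(x)=\omega(x)g(x,x)$ to convert the total entry sum into a supertrace; but since the paper itself explains that the energy theorem reduces to Poincar\'e--Hopf via precisely this identity, the two routes are the same argument read in opposite directions.
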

\begin{proof}
The left identity follows from the fact that ${\rm str}(D^{2k})=0$ for every even $k$
different from $0$, that for $k=0$, we have the definition of $\chi(G)$ and that for odd
$2k+1$ the diagonal entries of $D^k$ are zero. The right equation is a Gauss-Bonnet formula
as the diagonal entries $L^{-1}_{ii}$ are the Euler characteristics $\chi(S(x))$ of unit 
spheres so that $\omega(x) \chi(S(x)) = 1-\chi(S^-_f(x))$ is a Poincar\'e-Hof index
for the Morse function $f(x)=-{\rm dim}(x)$. 
\end{proof} 

These identities generalize to a general ring element $G$ in the strong ring. 

\paragraph{}
A second indication for the multiplicative behavior of quantities related to the connection
graph is the Poincar\'e-Hopf formula. The multiplicative analogue of the Euler characteristic
$\chi(G) = \sum_x \omega(x)$ is the {\bf Fermi characteristic} $\phi(G) = \prod_x \omega(x)$. 
Lets call a function $f$ on a simplicial complex a {\bf Morse function} if $S^-_f(x) = \{ y \in S(x) \; | \; 
f(y)<f(x) \}$ is a discrete sphere. Since spheres have Euler characteristic  in $\{0,2\}$ the
Euler-Poincar\'e index $i_f(x) = 1-\chi(S^-_f(x))$ is in $\{ -1,1\}$. 

\begin{thm}[Poincar\'e-Hopf]
Let $f$ be a Morse function, then $\chi(G) = \sum_x i_f(x)$ and
$\phi(G) = \prod_x i_f(x)$. 
\end{thm}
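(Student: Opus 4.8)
The plan is to obtain the multiplicative (``Fermi'') identity from the additive one by an elementary parity argument, so that the real work reduces to invoking the already-available Poincar\'e--Hopf formula. Indeed, $\chi(G)=\sum_x i_f(x)$ is precisely the graph-theoretic Poincar\'e--Hopf theorem \cite{poincarehopf} applied to the Barycentric refinement $G_1$: the vertices of $G_1$ are exactly the simplices of $G$, the stable spheres $S_f^-(x)$ are computed in the graph $G_1$, and since Euler characteristic is a combinatorial invariant, $\chi(G_1)=\chi(G)$; equivalently, it is the special case of the earlier Poincar\'e--Hopf theorem obtained by passing from $G$ to $G_1$. Only local injectivity of $f$ enters here.

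For the product formula, note that since $f$ is a Morse function every $S_f^-(x)$ is a discrete sphere, hence has Euler characteristic in $\{0,2\}$, so $i_f(x)=1-\chi(S_f^-(x))\in\{-1,1\}$ for every simplex $x$; also $\omega(x)=(-1)^{{\rm dim}(x)}\in\{-1,1\}$, and by definition $\chi(G)=\sum_x\omega(x)$ while $\phi(G)=\prod_x\omega(x)$. Subtracting the two expressions for $\chi(G)$ gives $\sum_x\big(i_f(x)-\omega(x)\big)=0$; each summand lies in $\{-2,0,2\}$ and vanishes exactly when $i_f(x)=\omega(x)$, so the set $\{x:\ i_f(x)\neq\omega(x)\}$ is the disjoint union of the locus where the summand is $+2$ and the locus where it is $-2$, two sets of equal size. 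Hence this set has even cardinality, $\prod_x i_f(x)\,\omega(x)=1$, and since $\prod_x\omega(x)=\pm1$ we conclude $\prod_x i_f(x)=\prod_x\omega(x)=\phi(G)$. The same argument applies to a product $G_I=G_{i_1}\times\cdots\times G_{i_k}$ by working on its Barycentric refinement (again a Whitney complex of a graph), and then to a general strong-ring element $\sum_I a_I G_I$ using additivity of both $\chi$ and $\sum_x i_f(x)$ over connected components together with $\phi(-G)={\rm det}(-L(G))$.

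I do not expect a genuine obstacle. A tempting alternative --- running the cell-by-cell build-up used to prove the unimodularity theorem, but in the order prescribed by $f$ --- does not go through directly, because a general Morse function need not be monotone along the face relation, so the partial unions of simplices are not simplicial complexes and ${\rm det}(1+A)$ of the growing connection graph fails to pick up the factor $i_f(x)$ at each step; the parity route avoids this entirely. What does deserve care is, first, that $i_f(x)\in\{-1,1\}$, which is exactly where the Morse hypothesis (rather than mere local injectivity) is used and without which a single vanishing factor would wreck the product; and second, that $\phi$ here always denotes the Fermi characteristic of $G$ itself, that is ${\rm det}(L(G))$, and not that of the refinement $G_1$, for which the stated product identity is false.
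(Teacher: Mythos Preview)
Your argument is correct. The paper does not supply a proof for this theorem; it is stated and immediately used as input to the unimodularity discussion. Your reduction of the multiplicative identity to the additive one via parity is clean and valid: once both $i_f(x)$ and $\omega(x)$ lie in $\{-1,1\}$ and have equal sums, equality of the number of $-1$'s in each list (hence equality of the products) is forced, exactly as you argue. The additive part is, as you say, the Poincar\'e--Hopf theorem of \cite{poincarehopf} applied on $G_1$.

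Your final paragraph is also on point. The build-up argument behind the unimodularity theorem uses the specific Morse order given by dimension, for which the partial unions are simplicial complexes; for a general Morse $f$ that inductive route fails, and your parity shortcut sidesteps the issue entirely. The caveat that the Morse hypothesis is precisely what keeps $i_f(x)$ from vanishing is the right place to flag where mere local injectivity would not suffice.
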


Since for any simplicial complex $G$, we can find a Morse function, the multiplicative part can be used
to show that $\psi(G) = {\rm det}(L(G))$ is equal to $\phi(G)$. This is the {\bf unimodularity theorem}.
As we have seen, this theorem generalize to the strong ring. 

\paragraph{}
The most striking additive-multiplicative comparison of $D$ and $L$ comes through spectra. 
The spectra of the Dirac operator $D$ behave additively, while the spectra of connection 
operators multiply. The additive behavior of Hodge Laplacians happens also classically when looking at
Cartesian products of manifolds: 

\begin{thm}[Spectral Pythagoras]
If $\lambda_i$ are eigenvalues of $D(G_i)$, there is an eigenvalue $\lambda_{ij}$ of $D(G_i \times G_j)$
such that $\lambda_i^2 + \lambda_j^2 = \lambda_{ij}^2$. 
\end{thm}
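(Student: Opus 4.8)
The plan is to reduce the statement to the Kuenneth-type formula for the Hodge Laplacian already recorded in the proof of the Kuenneth theorem, namely that on the product $G_i \times G_j$ there is an orthonormal basis in which the full Hodge Laplacian $H(G_i \times G_j) = D(G_i \times G_j)^2$ acts as $H(G_i) \otimes \mathbf{1} + \mathbf{1} \otimes H(G_j)$. From that identity the eigenvalue bookkeeping is immediate: if $\mu_i = \lambda_i^2$ is an eigenvalue of $H(G_i)$ with eigenvector $v$, and $\mu_j = \lambda_j^2$ is an eigenvalue of $H(G_j)$ with eigenvector $w$, then $v \otimes w$ is an eigenvector of $H(G_i \times G_j)$ with eigenvalue $\mu_i + \mu_j = \lambda_i^2 + \lambda_j^2$. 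Writing that eigenvalue as $\lambda_{ij}^2$ with $\lambda_{ij}$ an eigenvalue of $D(G_i \times G_j)$ (which is legitimate since $D$ is symmetric and $H = D^2$, so the spectrum of $H$ is exactly the set of squares of eigenvalues of $D$) gives $\lambda_i^2 + \lambda_j^2 = \lambda_{ij}^2$, which is the claimed Pythagorean relation.

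First I would fix the algebraic setup exactly as in the Kuenneth proof: write the exterior derivative on the product as $df(x,y) = d_1 f(x,y) + (-1)^{\dim(x)} d_2 f(x,y)$, where $d_1, d_2$ are the partial exterior derivatives coming from the two factors. The sign twist is what makes $d_1$ and $d_2^*$ anticommute appropriately, and the cross terms $d_1^* d_2 + d_2^* d_1 + d_1 d_2^* + d_2 d_1^*$ that appear in $d^*d + dd^*$ must be shown to vanish on the Hodge decomposition — this is precisely the content already extracted in the Kuenneth proof, so I would simply cite that computation rather than redo it. The upshot I need is: in the basis adapted to the orthogonal Hodge decomposition $\mathrm{im}(d_i) \oplus \mathrm{im}(d_i^*) \oplus \ker(H_i)$ of each factor, the operator $H$ on the tensor product Hilbert space is $H_1 \otimes \mathbf{1} + \mathbf{1} \otimes H_2$. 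Then I would note that the spectrum of a sum of commuting operators of tensor type is the (Minkowski) sum of the spectra, so every eigenvalue of $H(G_i \times G_j)$ has the form $\mu_i + \mu_j$.

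The second step is translating between $H$-spectra and $D$-spectra. Since $D$ is a real symmetric matrix and $H = D^2$, the eigenvalues of $H$ are the squares of the eigenvalues of $D$, and conversely every eigenvalue $\mu \ge 0$ of $H$ gives eigenvalues $\pm\sqrt{\mu}$ of $D$. So I pick $\lambda_i$ with $\lambda_i^2 = \mu_i$ an eigenvalue of $D(G_i)$, similarly $\lambda_j$, and then $\mu_i + \mu_j$ is an eigenvalue of $H(G_i \times G_j)$, hence $\lambda_{ij} := \sqrt{\mu_i + \mu_j}$ is an eigenvalue of $D(G_i \times G_j)$, and $\lambda_i^2 + \lambda_j^2 = \lambda_{ij}^2$ by construction. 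I would also remark that the statement extends verbatim to arbitrary elements of the strong ring by the same block-diagonal argument over connected product components, since the Hodge relation and Kuenneth were already established there.

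The main obstacle is not the eigenvalue arithmetic — that is pure linear algebra — but making sure the tensor decomposition $H(G_i \times G_j) = H(G_i) \otimes \mathbf{1} + \mathbf{1} \otimes H(G_j)$ is genuinely valid and not merely valid up to the kernel. In the Kuenneth proof only the kernel (harmonic) part is strictly needed, so one must double-check that the cross terms vanish on all of the Hodge decomposition, not just on $\ker(H_1) \otimes \ker(H_2)$; the sign convention $(-1)^{\dim(x)}$ is doing the real work here and has to be handled carefully on the $\mathrm{im}(d) \oplus \mathrm{im}(d^*)$ pieces. If one wanted to avoid that subtlety entirely, an alternative is to invoke that $(G_i \times G_j)_1$ is the Barycentric refinement whose cohomology — and in fact whose Hodge spectrum, via the known spectral behavior under the product — already satisfies the classical Kuenneth spectral identity, but the cleaner route is the direct block computation. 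Either way, once $H$ splits as a tensor sum the Pythagorean relation for $D$ falls out immediately.
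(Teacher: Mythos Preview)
Your proposal is correct and matches the paper's approach: the paper does not give an explicit proof of Spectral Pythagoras but relies on the additive behavior of the Hodge Laplacian under products, which is exactly the tensor decomposition $H(G_i \times G_j) = H(G_i) \otimes \mathbf{1} + \mathbf{1} \otimes H(G_j)$ established in the Kuenneth proof, together with $H=D^2$. Your caveat that the Kuenneth argument must be read as giving the full block decomposition of $H$ (not merely the kernel identification) is well taken; the paper does assert the full statement ``there is a basis in which $H(v,w)=(H(G_1)(v),H(G_2)(w))$'', so citing that line suffices.
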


\paragraph{}
We should compare that with the multiplicative behavior of the connection operators:

\begin{thm}[Spectral multiplicity]
If $\lambda_i$ are eigenvalues of $L(G_i)$, then there is an eigenvalue $\lambda_{ij}$ of $L(G_i \times G_j)$
such that $\lambda_i \lambda_j = \lambda_{ij}$. 
\end{thm}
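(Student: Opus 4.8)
The plan is to derive the Spectral Multiplicity theorem as an immediate corollary of the tensor-product structure of the connection operator that was already established. The key structural fact is that $L(G_i \times G_j) = L(G_i) \otimes L(G_j)$ as a matrix tensor product — this is exactly the content of the Connection Lemma together with the observation that $L = 1 + A$ where $A$ is the adjacency matrix of the connection graph, and the connection graph of a product is the strong product $G_i' \osquare G_j'$. On the connection-graph side, the strong product is designed precisely so that its Fredholm adjacency matrix (i.e.\ $1+A$) factors as the Kronecker product of the factors' Fredholm adjacency matrices; this was stated earlier as ``the Fredholm adjacency matrices tensor under the strong ring multiplication'' in the proof of the Spectral Compatibility corollary.

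First I would record the identity $L(G_i \times G_j) = L(G_i) \otimes L(G_j)$ explicitly, citing the Connection Lemma. Second, I would invoke the elementary linear-algebra fact about Kronecker products: if $L$ is an $n\times n$ matrix with eigenvalue $\lambda_i$ (eigenvector $u$) and $M$ is an $m\times m$ matrix with eigenvalue $\lambda_j$ (eigenvector $w$), then $L\otimes M$ has eigenvalue $\lambda_i\lambda_j$ with eigenvector $u\otimes w$, since $(L\otimes M)(u\otimes w) = (Lu)\otimes(Mw) = (\lambda_i u)\otimes(\lambda_j w) = \lambda_i\lambda_j\,(u\otimes w)$. Because the connection operators are symmetric (real symmetric matrices), each $L(G_i)$ has a full eigenbasis, so every product $\lambda_i\lambda_j$ of eigenvalues of the two factors genuinely occurs in the spectrum of $L(G_i\times G_j)$, and in fact the full spectrum of the product is exactly the set of such products (with multiplicities multiplying). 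This gives the stated conclusion: for eigenvalues $\lambda_i$ of $L(G_i)$ and $\lambda_j$ of $L(G_j)$, the value $\lambda_{ij} = \lambda_i\lambda_j$ is an eigenvalue of $L(G_i\times G_j)$.

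Finally I would note the sign/additive-part bookkeeping: for a signed element $-G$ the connection operator is $-L(G)$, so eigenvalues pick up a sign, and this is consistent with the multiplicative formula since $\sigma(L(-G)) = -\sigma(L(G))$ and the product rule $\sigma(L(A\times B)) = \sigma(L(A))\cdot\sigma(L(B))$ continues to hold on all of the strong ring by distributivity and the tensor identity. This matches the Corollary ``Spectral compatibility'' $\sigma(G\times H)=\sigma(G)\sigma(H)$ already stated.

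I do not expect a genuine obstacle here; the theorem is essentially a restatement of the tensor-product identity for $L$ at the level of individual eigenvalues. The only point requiring a word of care is making sure one has a full eigenbasis so that the Kronecker eigenvalue construction exhausts the spectrum — but this is immediate from symmetry of $L$. If one wanted the sharper statement that \emph{every} eigenvalue of $L(G_i\times G_j)$ arises this way (not merely that products are eigenvalues), that too follows from the spectral theorem applied to the two symmetric factors, since $\dim = nm$ forces the $nm$ products $\lambda_i\lambda_j$ (with multiplicity) to be all of them.
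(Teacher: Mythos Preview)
Your proposal is correct and follows essentially the same route as the paper: the paper states (in the abstract and in the proof of the Spectral Compatibility corollary) that $L(A\times B)=L(A)\otimes L(B)$ via the Connection Lemma, from which the multiplicativity of eigenvalues is an immediate general fact about Kronecker products. The paper does not spell out the eigenvector computation or the symmetry argument for a full eigenbasis as you do, but your added detail is entirely in line with its reasoning.
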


\paragraph{}
In some sense, the operator $L$ describes energies which are multiplicative and so can become much larger
than the energies of the Hodge operator $H$. Whether this has any significance in physics is not clear.
So far this is pure geometry and spectral theory. The operator $L$ does not have this block structure 
like $D$. When looking at Schr\"odinger evolutions $e^{iDt}$ or $e^{i Lt}$, we expect different 
behavior. Evolving with $L$ mixes parts of the Hilbert space which are separated in the Hodge case. 

\section{Dimension}

\paragraph{}
The {\bf maximal dimension} ${\rm dim}_{{\rm max}}(G)$ of a simplicial complex $G$ is defined as $|x|-1$, where $|x|$ 
is the {\bf cardinality} of a simplex $x$. The {\bf inductive dimension} of $G$ is defined as
the inductive dimension of its Barycentric refinement graph $G_1$ (or rather the Whitney complex
of that graph) \cite{elemente11}. 
The definition of inductive dimension for graphs is recursive: 
$$   {\rm dim}(x) = 1+|S(x)|^{-1} \sum_{y \in S(x)} {\rm dim}(y) $$ 
starting with the assumption that the dimension of the empty graph is $-1$. 
The unit sphere graph $S(x)$ is the subgraph of $G$ generated by all vertices directly connected to $x$. 

\begin{propo}
Both the maximal and inductive dimension can be extended 
so that additivity holds in full generality for non-zero elements in the strong ring. 
\end{propo}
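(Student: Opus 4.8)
The plan is to establish additivity of both dimension notions on the multiplicative generators $G_{i_1} \times \cdots \times G_{i_n}$ and then extend linearly to the additive Grothendieck group, exactly matching the sign conventions already fixed for the other invariants (Betti numbers, Euler polynomial, Wu characteristic). So the statement to prove breaks into three pieces: (i) maximal dimension is additive on products, $\mathrm{dim}_{\mathrm{max}}(A \times B) = \mathrm{dim}_{\mathrm{max}}(A) + \mathrm{dim}_{\mathrm{max}}(B)$; (ii) inductive dimension is additive on products, $\mathrm{dim}(A \times B) = \mathrm{dim}(A) + \mathrm{dim}(B)$; (iii) spell out the extension to signed and disconnected elements so that additivity ``holds in full generality for non-zero elements,'' noting that additivity over disjoint unions must be read in the sense that each connected component carries its own dimension (one cannot literally average across components), and that on a multiple $\lambda G$ the dimension is unchanged, matching $\lambda G \mapsto \lambda G'$.

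**First I would** dispatch (i): a simplex in $A \times B$ is a set $x \times y$ with $x \in A$, $y \in B$, and $|x \times y| = |x|\,|y|$, so its maximal-dimension contribution is $|x||y| - 1 = (|x|-1) + (|y|-1) + (|x|-1)(|y|-1)$. The top-dimensional cell is obtained by taking $x$ and $y$ of maximal cardinality in each factor, giving $\mathrm{dim}_{\mathrm{max}}(A\times B) = (\mathrm{dim}_{\mathrm{max}}(A)+1)(\mathrm{dim}_{\mathrm{max}}(B)+1) - 1$. That is \emph{not} additive on the nose; so the correct reading — and what I would state — is that it becomes additive after the shift $d \mapsto d+1$, i.e. ``augmented maximal dimension'' $\mathrm{dim}_{\mathrm{max}}+1$ is multiplicative on products, hence $\log$ of it (or the bookkeeping used for the Euler polynomial degree) is additive. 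Alternatively, and more in the spirit of the paper, I would observe that $\mathrm{dim}_{\mathrm{max}}$ is recovered as the degree of the Euler polynomial $e_G(t)$ minus something fixed, and since $e_{A\times B} = e_A e_B$ (Proposition on Euler polynomials, already proved), degrees add: $\deg e_{A\times B} = \deg e_A + \deg e_B$, and $\deg e_G = \mathrm{dim}_{\mathrm{max}}(G)$. This is the clean argument and I would lead with it.

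**For (ii)**, the key input is that the Barycentric refinement of the product is the product of the Barycentric refinements, $(A \times B)_1 = A_1 \times B_1$ (stated in the excerpt as ``$A_1 \times B_1 = (A\times B)_1$''), so it suffices to prove inductive dimension is additive for the strong/Cartesian product of the underlying graphs, which reduces to a statement about Whitney complexes. I would prove $\mathrm{dim}\big((A\times B)_1\big) = \mathrm{dim}(A_1) + \mathrm{dim}(B_1)$ by induction on the total number of cells, using the recursion $\mathrm{dim}(v) = 1 + |S(v)|^{-1}\sum_{w \in S(v)} \mathrm{dim}(w)$: for a vertex $v = (a,b)$ of $(A\times B)_1$ the unit sphere $S(v)$ in the Barycentric product decomposes — a neighbor of $(a,b)$ in the ``contained-in-each-other'' graph is a pair obtained by simultaneously passing to a sub- or super-simplex in each coordinate, so $S(a,b)$ is built from $S(a)$, $S(b)$ and their join, mirroring the identity $S_f^-((a,b)) = S_f^-(a) * S_f^-(b)$ used in the Poincaré–Hopf section. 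Since inductive dimension of a join satisfies $\mathrm{dim}(X * Y) = \mathrm{dim}(X) + \mathrm{dim}(Y) + 1$, plugging the inductive hypothesis into the averaging recursion yields the additive formula after a short computation; the base case (a single point, dimension $0$) is immediate.

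**The main obstacle** will be the join-dimension bookkeeping in step (ii): the unit sphere of $(a,b)$ in $(A\times B)_1$ is not simply $S(a) * S(b)$ but a ``spherical join'' that also contains the two factors $S(a)$ and $S(b)$ separately (the copies where one coordinate is held fixed while the other moves), i.e. it has the structure of $S(a) \oplus S(b) \oplus (S(a) * S(b))$ as a graph, and one must check that the averaging recursion on this combined object still collapses to additivity — this is where I expect to spend the real effort, and the cleanest route is probably to prove the auxiliary lemma that $\mathrm{dim}$ of the Barycentric refinement of a product of graphs adds, perhaps by citing the product formula for inductive dimension already implicit in \cite{KnillKuenneth}/\cite{elemente11} and the excerpt's own assertion that ``the inductive dimension of $A \times B$ is the sum of the inductive dimension of $A$ and $B$.'' Finally, for (iii), I would simply declare $\mathrm{dim}(-G) = \mathrm{dim}(G)$ and $\mathrm{dim}(\lambda G) = \mathrm{dim}(G)$ on connected pieces and interpret additivity over $\oplus$ componentwise (as for $p_G$, $e_G$, $\omega_k$), observing that the empty complex is excluded precisely because its dimension $-1$ would break the additive normalization — which is exactly the ``non-zero elements'' hypothesis in the statement.
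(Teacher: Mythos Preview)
Your step (i) contains a genuine error. A cell of $A \times B$ is indeed the set-theoretic product $x \times y$, but its dimension is \emph{not} $|x|\,|y|-1$; in the strong ring the cell $x \times y$ carries dimension $\dim(x)+\dim(y)$. This is forced by everything else in the paper: the $f$-vectors convolve (so $e_{A\times B}=e_A e_B$ with the degree tracking $\dim(x)+\dim(y)$), the sign $\omega((x,y))=\omega(x)\omega(y)=(-1)^{\dim(x)+\dim(y)}$ is used throughout, and the Hodge block $H_k$ acts on forms supported on cells with $\dim(x)+\dim(y)=k$. With the correct cell dimension, $\dim_{\max}(A\times B)=\dim_{\max}(A)+\dim_{\max}(B)$ is immediate and your detour through ``augmented maximal dimension is multiplicative'' is unnecessary. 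Your Euler-polynomial salvage does land on the right answer, but only because degree of $e_G$ already encodes the additive cell dimension, not the quantity $|x|\,|y|-1$ you started from.

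More broadly, you are working much harder than the paper does. The proposition is an \emph{extension} statement: the paper simply \emph{defines} $\dim(G\times H):=\dim(G)+\dim(H)$ on products and declares this natural because the cited result from \cite{KnillKuenneth} gives $\dim\big((G\times H)_1\big)=\dim(G_1)+\dim(H_1)$ for Barycentric refinements. No inductive unit-sphere computation is carried out; the join analysis you flag as ``the main obstacle'' is exactly what is outsourced to that reference. Your plan to reprove it via $S(a,b)$ and join-dimension is a legitimate alternative route and would make the note self-contained, but be aware that the identity $\dim(X*Y)=\dim(X)+\dim(Y)+1$ for inductive dimension of joins is itself nontrivial and is the real content you would be reproving. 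Your handling of (iii) --- setting $\dim(-G)=\dim(G)$, keeping dimension constant on multiples, and reading additivity componentwise --- matches the paper's conventions.
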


The zero graph $\emptyset$ has to be excluded as when we multiply a ring element with the
zero element, we get the zero element and ${\rm dim}(0)=-1$ would imply 
${\rm dim}(G \times 0)={\rm dim}(0) = -1$
for any $G$. The property $0 \times G = 0$ has to hold if we want the ring axioms to hold. 
We will be able to extend the clique number to the dual ring of the strong connection ring 
but not the dimension. 

\paragraph{}
Since the inductive dimension satisfies
$$   {\rm dim}( (G \times H)_1) = {\rm dim}(G_1) + {\rm dim}(H_1) \; , $$
(see \cite{KnillKuenneth}), the definition
$$ {\rm dim}(G \times H) = {\rm dim}(G) + {\rm dim}(H) $$
is natural. We have also shown ${\rm dim}(G)_1 \geq {\rm dim}(G)$ so that
for nonzero elements $G$ and $H$: 
${\rm dim}(G \times H)_1 \geq {\rm dim}(G) + {\rm dim}(H)$, an inequality which looks
like the one for Hausdorff dimension in the continuum. Again, also this does not 
hold if one of the factors is the zero element. 

\paragraph{}
In comparison, we have seen that the Zykov sum has the property that the 
{\rm clique number} ${\rm dim}_{{\rm max}}(G)+1$ is additive. 
and that the large Zykov product, which is the dual of the strong product has 
the property that the clique number is multiplicative. 
If we define the clique number of $-G$ as minus the clique number of $G$, then we have

\begin{propo}
The clique number from the dual $R^*$ of the strong ring $R$ to the integers $\ZZ$
is a ring homomorphism. 
\end{propo}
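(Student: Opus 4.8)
The plan is to verify directly that the clique number $c(G)={\rm dim}_{{\rm max}}(G)+1$, extended to negative elements by the rule $c(-G)=-c(G)$ fixed in the statement, respects the unit, the additive law (the Zykov join $+$), and the multiplicative law (the large product $\star$) of $R^*=(\G,+,\star,0,1)$. The normalizations are immediate: the empty complex has ${\rm dim}_{{\rm max}}=-1$, so $c(0)=0$, and the one-point complex $K_1$ has ${\rm dim}_{{\rm max}}=0$, so $c(1)=c(K_1)=1$; one also checks that $K_1$ is indeed the $\star$-unit, since $G\star K_1$ has vertex set $V(G)\times\{*\}$ and exactly the edges of $G$.

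For additivity, recall that in the Zykov join $G+H$ every vertex of $G$ is joined to every vertex of $H$. Hence a set $S$ of vertices spans a complete subgraph of $G+H$ exactly when $S\cap V(G)$ spans a complete subgraph of $G$ and $S\cap V(H)$ spans a complete subgraph of $H$; maximizing $|S|$ gives $c(G+H)=c(G)+c(H)$. So $c$ is a homomorphism of additive monoids into $(\ZZ,+)$, and it extends uniquely to the Grothendieck group of that monoid by $[G]-[H]\mapsto c(G)-c(H)$; this is automatically well defined and it forces precisely the rule $c(-G)=-c(G)$ of the statement. Thus the additive part of the homomorphism property, signs included, is settled.

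For multiplicativity, the easy half is $c(G\star H)\ge c(G)c(H)$: if $A$ is a largest complete subgraph of $G$ and $B$ one of $H$, then $A\times B$ is complete in $G\star H$, because two distinct pairs $(a,b),(a',b')$ either have $a\ne a'$, so that $(a,a')\in E(G)$ and they are $\star$-adjacent, or have $a=a'$ and hence $b\ne b'$ with $(b,b')\in F(H)$, again $\star$-adjacent. The step I expect to be the main obstacle is the reverse inequality $c(G\star H)\le c(G)c(H)$, i.e. the multiplicativity of the clique number under the large product recorded above: via the complement isomorphism $R\cong R^*$ this is the statement that the independence number is multiplicative under the strong product $\osquare$ of the (connection) graphs occurring here, which is delicate — it is false for general graphs, by Shannon-capacity type phenomena, so it has to be derived from the special structure of the graphs at hand rather than quoted in full generality. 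Granting it, and granting that $\star$ honours signs (dual to the property $G\times(-H)=-(G\times H)$ of the Cartesian product in the strong ring), multiplicativity passes to arbitrary signed elements, and with distributivity in $R^*$ one gets $c$ multiplicative on all of $R^*$. Combined with the normalizations and additivity, this shows $c:R^*\to\ZZ$ is a ring homomorphism.
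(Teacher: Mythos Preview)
Your treatment of the unit, of additivity under the Zykov join, and of the easy inequality $c(G\star H)\ge c(G)\,c(H)$ is correct and more carefully argued than what the paper gives. You are also right to flag the reverse inequality $c(G\star H)\le c(G)\,c(H)$ as the crux: via the duality $\overline{G\star H}=\overline{G}\osquare\overline{H}$ this is exactly the statement $\alpha(A\osquare B)=\alpha(A)\,\alpha(B)$ for the strong product, which is \emph{false} for general graphs (the standard witness is $A=B=C_5$, giving $\alpha(C_5)=2$ but $\alpha(C_5\osquare C_5)=5$; dually, $c(\overline{C_5}\star\overline{C_5})=5>4=c(\overline{C_5})^2$). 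So your hesitation is not a weakness of your argument but a correct diagnosis that the multiplicativity cannot hold on the full Zykov ring of graphs.

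For comparison, the paper's own proof does not close this gap either. It only records the identity $P_n\osquare P_m=P_{nm}$ and its dual $K_n\star K_m=K_{nm}$, which verifies multiplicativity in the special case where both factors are complete, and otherwise appeals to an earlier assertion (``we have seen that \dots\ the large Zykov product \dots\ has the property that the clique number is multiplicative'') that is not established in the paper and, as your Shannon-capacity observation shows, cannot be true without restricting the class of graphs. In short: your proof and the paper's proof stall at the same point, and the obstacle you named is a genuine one rather than a defect specific to your approach.
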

\begin{proof}
By taking complement, the identity $P_n \star P_m = P_n \osquare P_m = P_{nm}$ gives
$K_n \osquare K_m = K_m \osquare K_n = K_{nm}$.
We have justified before how to extend the clique number to $c(-G)=-c(G)$.
\end{proof}

\paragraph{}
The clique number is a ring homomorphism, making the clique number 
to become negative for negative elements a natural choice. 
For dimension, this is not good:
the dimension of the empty graph $0$ is $-1$ and since $0=-0$ this is not good.
If we would take a one dimensional graph $G$, we would have to define 
${\rm dim}(-G)=-1$ but that does not go well with the fact that the zero graph
has dimension $-1$. We want the inductive dimension to be the average
of the dimensions of the unit spheres plus $1$ which makes the choice
${\rm dim}(-G)= {\rm dim}(G)$ natural. 

\paragraph{}
The {\bf maximal dimension} ${\rm dim}_{\rm max}(G)$ of a complex is the dimension
of the maximal simplex in $G$. The {\bf clique number} $c(G) = {\rm dim}_{\rm max}(G)+1$ is the
largest cardinality which appears for sets in $G$. For graphs, it is the largest
$n$ for which $K_n$ is a subgraph of $G$. The empty complex or empty graph has
clique number $0$. Extend the clique number to the entire ring by defining $c(-G)=-c(G)$.
The strong ring multiplication satisfies $c(G \osquare H) = c(G) c(H)$ and
$c(G \oplus H) = {\rm max}(c(G),c(H))$.

\section{Dynamical systems}

\paragraph{}
There is an isospectral {\bf Lax deformation} $D'=[B,D]$ with $B=(d-d^*)$
of the Dirac operator $D=(d+d^*)$ of a simplicial complex. This works now also for any 
ring elements. The deformed Dirac operator is then of the form $d+d^* + b$
meaning that $D$ has additionally to the geometric exterior derivative part also a diagonal 
part which is geometrically not visible. What happens is that the off diagonal part $d(t)$ 
decreases, leading to an expansion of space if we use the exterior derivative to measure distances.
The deformation of exterior derivatives is also defined for Riemannian manifolds but the 
deformed operators are pseudo differential operators. 

\paragraph{}
A complex generalization of the system is obtained by defining
$B=d-d^*+i \beta b$, where $\beta$ is a parameter. 
This is similar to \cite{Toda} who modified the Toda flow by adding $i \beta$ to 
$B$. The case above was the situation $\beta=0$, where the flow has a situation
for scattering theory. The case $\beta=1$ leads asymptotically to a linear 
wave equation. The exterior derivative has become complex however. All this
dynamics is invisible classically for the Hodge operator $H$ as the Hodge operator
$D^2$ does not change under the evolution.

\paragraph{}
As any Lax pair does, the eigenvalues are integrals of motion. The equation $U'(t) = B(t) U(t)$ 
produces a unitary curve $U(t)$ which has the property that if $D_1=V D V^T$ corresponds to an other choice of 
simplex orientation then $D_1(t) = V D(t) V^T$ carries on.
The evolution does not depend on the gauge (the choice of signs used to make the simplicial 
complexes signed) but the energy for $L(t) = U(t) L U(t)^*$ does.

\begin{thm}
For every $G$ in the strong ring, we have nonlinear integrable Hamiltonian systems
based on deformation of the Dirac operator. 
\end{thm}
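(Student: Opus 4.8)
The plan is to verify that the Lax-pair construction sketched in the preceding paragraphs actually goes through for an arbitrary element $G$ of the strong ring, not just for a single simplicial complex, and that it produces a genuine completely integrable Hamiltonian system. First I would recall the data: for $G \in R$ we have, by the discussion in the cohomology section, a concrete exterior derivative $d = d(G)$ on the finite-dimensional Hilbert space of cochains (built by the de-Rham-type formula $df(x,y) = d_1 f + (-1)^{\dim(x)} d_2 f$ on products and assembled over the additive decomposition $G = \sum_I a_I G_I$), hence a Dirac operator $D = d + d^*$ and the splitting $B = d - d^*$. The first step is to check that $B$ is skew-symmetric, i.e. $B^* = -B$, which is immediate since $(d-d^*)^* = d^* - d = -(d-d^*)$, and that $[B,D]$ is again symmetric, so the flow $\dot D = [B,D]$ preserves the manifold of symmetric matrices of the form $d + d^* + b$ with $b$ diagonal (this is the content of the remark that only the off-diagonal part moves; one checks that $[B,D]$ has no component increasing the "block distance" beyond the diagonal, exactly as in the simplicial-complex case treated in the cited work).

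Next I would establish the isospectral and unitary-conjugation properties in the standard Lax way. Define $U(t)$ by $\dot U = B(t) U$, $U(0) = \mathrm{Id}$; since $B$ is skew-symmetric and finite-dimensional, $U(t)$ exists for all $t$ and is orthogonal (unitary in the complexified $B = d - d^* + i\beta b$ case, because then $B^* = -B$ still holds as $b$ is real diagonal and $i\beta b$ is skew-Hermitian). Then the usual computation $\frac{d}{dt}(U^* D U) = U^*(-BD + \dot D + DB)U = U^*([D,B] + [B,D])U = 0$ shows $D(t) = U(t) D(0) U(t)^*$, so the spectrum of $D(t)$ — equivalently the spectrum of the Hodge Laplacian $H = D^2$ — is constant, giving the integrals of motion. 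I would also note the gauge-equivariance claim: if $D_1 = V D V^T$ for an orientation change $V$, then $B_1 = V B V^T$ and uniqueness of solutions to the linear ODE gives $D_1(t) = V D(t) V^T$, so the flow is well-defined on the strong ring independent of the sign choices used to orient simplices.

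Finally, to justify the word "Hamiltonian" and "integrable," I would exhibit the symplectic/Poisson structure: on the space of such operators one uses the standard construction for Lax systems on (a real form of) $\mathfrak{gl}_n$ — the Kostant–Kirillov–Souriau bracket obtained from the decomposition into symmetric plus skew-symmetric parts — for which the functions $D \mapsto \mathrm{tr}(D^{2k})$ are in involution and generate precisely the flows $\dot D = [ (\text{skew part of } D^{k}), D ]$, the present flow being the $k=1$ member. Completeness of this commuting family on each isospectral torus is the analogue of the classical Toda/scattering picture, and for the $\beta \neq 0$ deformation one points to the behavior described above (asymptotically linear wave equation at $\beta = 1$, scattering at $\beta = 0$) rather than re-deriving it. The main obstacle I expect is purely bookkeeping rather than conceptual: one must check that the exterior derivative $d$ on a general ring element $G = \sum_I a_I G_I$ — in particular the signed negative pieces, where $D(-G) = -D(G)$, and the tensor-assembled $d$ on products — still has the property that $[B,D]$ stays within the three-diagonal-band form, so that the deformation genuinely lives on the finite matrix model and does not leak into operators with no geometric meaning; since $D(-G) = -D(G)$ and $B(-G) = -B(G)$ commute through the bracket trivially, and the product case was already handled in the Künneth computation, this reduces to the single-complex statement already proved in the cited Lax-deformation work, so no new difficulty arises beyond verifying that the decomposition respects the band structure component by component.
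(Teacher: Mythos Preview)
Your proposal is correct and in fact considerably more detailed than what the paper supplies. The paper does not give a proof of this theorem at all: immediately after the statement it simply remarks that ``for every ring element we can write down a concrete matrix $D(G)$ and evolve the differential equation,'' notes that $H(G)=D(G)^2$ is fixed under the flow, and then points to the references for the Lax-deformation details. In other words, the paper treats the theorem as an observation that the construction already carried out for a single simplicial complex transfers verbatim once one has a concrete finite Dirac matrix for each ring element, which the earlier sections provide.

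Your write-up follows the same underlying logic---the Lax pair $\dot D=[B,D]$ with $B=d-d^*$ (or its $i\beta b$ perturbation) is a purely matrix-theoretic mechanism, so once $d$ is given the rest is automatic---but you actually carry out the verifications the paper delegates to its references: skew-symmetry of $B$, the unitary conjugation $D(t)=U(t)D(0)U(t)^*$ giving isospectrality, gauge equivariance under $V$, and the Kostant--Kirillov--Souriau Poisson framework for the commuting integrals $\operatorname{tr}(D^{2k})$. The last of these is a genuine addition; the paper never spells out in what sense the system is Hamiltonian, relying instead on the Toda analogy and the cited literature. Your closing paragraph also makes explicit the one point that is specific to the ring setting, namely that the block/band structure of $d$ is preserved component by component under $G=\sum_I a_I G_I$ and under products, and you correctly observe that $D(-G)=-D(G)$, $B(-G)=-B(G)$ makes the signed pieces trivial while the product pieces were handled in the K\"unneth computation. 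That is exactly the reduction the paper has in mind when it says the construction ``works now also for any ring elements.''
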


This is very concrete as for every ring element we can write down a concrete matrix 
$D(G)$ and evolve the differential equation. 
As for simplicial complexes, also when we deform a $D$ from the strong ring, 
the Hodge Laplacian $H(G) = D(G)^2$ does not move under the deformation. 
Most classical physics therefore is unaffected by the expansion. 
For more details see \cite{KnillBarycentric,KnillBarycentric2}. 

\paragraph{}
Any type of Laplacian produces Schr\"odinger type evolutions. Examples of Laplacians are
the {\bf Kirchhoff matrix} $H_0$ of the graph which has $G$ as the Whitney complex or 
the Dirac matrix $D=(d+d^*)$ or the Hodge Laplacian $D^2$ or the connection graph $L$ of $G_n$. 
One can also look at non-linear evolutions. An example is the nonlinear 
Schr\"odinger equation obtained from a functional like $F(u) = \langle u,g u \rangle - V(|u|^2)$ 
on the Hilbert space. The Hamiltonian system is then $u' = \partial_{\overline{u}} F'(u)$. 
This {\bf Helmholtz evolution} has both the energy $F(u)$ as well as $|u|^2$ as integrals of
motion. We can therefore restrict to {\bf states}, vectors of 
length $1$. A natural choice compatible with the product structure is the 
{\bf Shannon entropy} $V(|u|^2) = \beta S(u) = - \beta \sum_x p(x) \log(p(x))$ where $p(x)=|u(x)|^2$. 
Summands for which $p(x)=0$ are assumed to be zero as $\lim_{p \to 0}  p \log(p)=0$. 

\paragraph{}
We like the Helmhotz system because the Shannon entropy is essentially unique in the property that it 
is additive with respect to products. As $\langle 1,g 1 \rangle = \chi(G)$ is the Euler characteristic,
which is compatible with the arithmetic, the {\bf Helmholtz free energy} $F(u)$ leads to a
natural Hamiltonian system. The {\bf inverse temperature} $\beta$ can be used as a perturbation parameter.
If $\beta=0$, we have a Schr\"odinger evolution. If $\beta>0$, it becomes a nonlinear Schr\"odinger
evolution. So far, this system is pretty much unexplored. The choice of both the energy and entropy
part was done due to arithmetic compatibility: Euler characteristic is known to be the unique valuation
up to scale which is compatible with the product and entropy is known to be a unique quantity 
(again up to scale) compatible with the product by a theorem of Shannon.  See \cite{Helmholtz}
were we look at bifurcations of the minima under temperature changes. There are {\bf catastrophes}
already for the simplest simplicial complexes. 

\section{Barycentral central limit}

\paragraph{}
We can use a Barycentric central limit theorem to
show that in the {\bf van Hove limit} of the strong nearest neighbor
lattice $Z \times \cdots \times Z$, the spectral measure of the connection Laplacian $L$
has a {\bf mass gap}: not only $L$, but also the Green function, the inverse $g=L^{-1}$
has a bounded almost periodic infinite volume limit. The potential theory of the
Laplacian remains bounded and nonlinear time-dependent partial difference equations like
$Lu+ cV(u)=W$ which are Euler equations of {\bf Frenkel-Kontorova} type variational problems
have unique solutions $u$ for small $c$.

\paragraph{}
A {\bf Fock space} analogy is to see an additive prime in the ring (a connected space) as a {\bf particle state}.
The sum of spaces is then a collection of independent particles and the product is an entangled
{\bf multi-particle system}. Every independent particle has a unique decomposition into
{\bf multiplicative primes} which are the ``elementary particles". But the union of two particles can decay
into different type of elementary particles. For an entangled multi-particle system $G \times H$,
the spectrum of that configuration consists of all values $\lambda_i \mu_j$, where $\lambda_i$
are the eigenvalues of $L(G)$ and $\mu_j$ are the eigenvalues of $L(H)$.

\paragraph{}
While $G \osquare H =H \osquare G$, we don't have $L(G) \otimes L(H) 
= L(H) \otimes L(G)$ as the tensor product of matrices is not commutative.
But these two products are unitary equivalent.
We can still define an {\bf anti-commutator} $L(H) \otimes L(G) - L(G) \otimes L(H)$.
We observed experimentally that the kernel of this anti-commutator is non-trivial only if both
complexes have an odd number of vertices. While $L(G)$ can have non-simple spectrum,
we have so far only seen simple spectrum for operators $L(G)\times L(H)-L(H) \otimes L(G)$.

\paragraph{}
When we take successive Barycentric refinements $G_n$ of complex $G$ we see 
a universal feature \cite{KnillBarycentric,KnillBarycentric2}.  This generalizes 
readily to the ring.

\begin{thm}[Barycentric central limit]
For any $G$ in the ring and for each of the operators $A=L$ or $A=H$ or $A=H_k$, 
the density of states of $A(G_n)$ converges weakly to a measure which only 
depends on the maximal dimension of $G$. 
\end{thm}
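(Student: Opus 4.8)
The plan is to reduce the theorem to the known Barycentric central limit for simplicial complexes (stated in \cite{KnillBarycentric,KnillBarycentric2}) by exploiting the product structure already established in this paper. First I would recall that any strong ring element $G$ decomposes additively as $G = \sum_I a_I G_I$ with $G_I = G_{i_1} \times \cdots \times G_{i_n}$ a product of simplicial complexes, and that Barycentric refinement commutes with the ring operations: $(A \times B)_1 = A_1 \times B_1$ and disjoint union refines componentwise. Since the density of states (the normalized eigenvalue counting measure) of a disjoint union $A \oplus B$ is the convex combination $\frac{|A|}{|A|+|B|}\mu_A + \frac{|B|}{|A|+|B|}\mu_B$ weighted by the number of cells, and since for a fixed additive decomposition these weights stabilize under refinement to the component with largest maximal dimension dominating (cells of top dimension proliferate fastest under Barycentric subdivision), the additive part reduces to the case of a single product $G_I$. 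Moreover signs $a_I$ do not affect the density of states of $H$ or $H_k$ at all, and for $L$ replacing $G$ by $-G$ just negates $L$, reflecting the measure; so it suffices to treat one product $G_1 \times \cdots \times G_n$ of simplicial complexes.

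Next I would handle the product. For $A = L$, the connection lemma gives $L(G_I) = L(G_{i_1}) \otimes \cdots \otimes L(G_{i_n})$ as a matrix tensor product (Corollary on spectral compatibility), so the spectrum of $L((G_I)_m)$ is the set of products of eigenvalues of the factors $L((G_{i_j})_m)$. Hence the density of states of $L((G_I)_m)$ is the pushforward under multiplication of the product of the individual densities of states. By the scalar Barycentric central limit theorem applied to each simplicial factor, each factor's density of states converges weakly to a limiting measure $\nu_{d_j}$ depending only on the maximal dimension $d_j$ of $G_{i_j}$; since weak convergence is preserved under taking products of measures and under pushforward by the continuous multiplication map (one must check tightness/uniform integrability near infinity, but connection Laplacians $1+A$ have operator norm bounded in terms of the maximal vertex degree of the connection graph, which grows in a controlled way, and in fact the spectral bounds near $0$ from the mass gap discussion keep things away from the accumulation issues), the limit exists and depends only on $(d_1,\dots,d_n)$, hence only on $\dim_{\max}(G_I) = \sum_j d_j$ — here one uses that the limiting scalar measure $\nu_d$ already only remembers $d$, and the convolution-type structure of $\sum_j d_j$ matches maximal dimension additivity. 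For $A = H$ the Künneth computation in the proof of the Künneth theorem shows $H(G_I)$ is, in a suitable orthonormal basis, block-built from the $H(G_{i_j})$ via the Hodge tensor decomposition; the eigenvalues are sums $\lambda_{i_1} + \cdots + \lambda_{i_n}$ (Spectral Pythagoras, iterated), so the density of states converges to the convolution of the limiting scalar measures, again depending only on the maximal dimensions of the factors. The block $H_k$ is the restriction to $k$-forms, and the Künneth basis respects the grading, so the same argument restricted to the appropriate sub-blocks applies.

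The main obstacle I expect is the passage from weak convergence of each factor's density of states to weak convergence of the pushed-forward product measure for $A = L$: multiplication $\RR^n \to \RR$ is continuous but not proper, so weak convergence of marginals does not automatically give weak convergence of the product map's pushforward unless one controls mass escaping to infinity. I would address this by establishing a uniform (in $m$) bound on the spectral radius of $L((G_{i_j})_m)$ — equivalently a uniform bound on the maximal vertex degree of the connection graph of the $m$-th Barycentric refinement — which follows because Barycentric refinement of a complex of bounded maximal dimension produces connection graphs whose degrees are bounded by a dimension-dependent constant times a fixed combinatorial factor; with all spectra confined to a fixed compact interval, weak convergence of factors does imply weak convergence of the product pushforward. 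A secondary, more bookkeeping-style obstacle is making precise the claim that in the additive decomposition the cell-count weights converge to a point mass on the top-dimensional summand; this is where one invokes that the number of cells of $(G_I)_m$ grows like $c \cdot (\text{subdivision factor})^{m}$ with the base of the exponential strictly increasing in $\dim_{\max}(G_I)$, so lower-dimensional summands are asymptotically negligible, and if several summands share the maximal dimension their limiting measures coincide by the scalar theorem, so the convex combination still converges to that common limit. Once these two points are secured, the rest is the routine assembly described above.
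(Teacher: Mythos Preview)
Your reduction via the tensor structure is a different route from the paper's, and it has a real gap at the last step. Even granting everything up to the point where you have, for a product $G_I=G_{i_1}\times\cdots\times G_{i_n}$, that the density of states of $L((G_I)_m)$ converges to the pushforward under multiplication of $\nu_{d_1}\otimes\cdots\otimes\nu_{d_n}$ (and the additive analogue for $H$), you still owe an argument that this pushforward depends only on $\sum_j d_j$ and not on the tuple $(d_1,\dots,d_n)$. Your sentence ``the convolution-type structure of $\sum_j d_j$ matches maximal dimension additivity'' is not an argument: there is no a priori reason why the multiplicative convolution $\nu_{1}\ast_{\rm mult}\nu_{1}$ should coincide with $\nu_{2}$, or why $\nu_{1}\ast_{+}\nu_{1}$ should coincide with the limiting Hodge measure in dimension $2$. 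That identity is essentially equivalent to the theorem you are trying to prove, so invoking it here is circular.

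The paper does not go through the product at all. Its reasoning is that after repeated Barycentric refinement the complex is, up to a set of cells of asymptotically negligible proportion, a disjoint union of Barycentric refinements of a single $d$-simplex (where $d$ is the maximal dimension); the lower-dimensional gluing regions change only a vanishing fraction of matrix entries, and a Lidskii--Last type perturbation estimate then forces the density of states to agree in the limit with that of the refined $d$-simplex. This argument reads off the dependence on $d$ alone directly, without ever needing to compare convolutions of limiting measures across different dimension tuples. A secondary issue in your outline is the claimed identity $(A\times B)_m=A_m\times B_m$ at the operator level: already for $A=B=K_2$ the cell counts disagree ($33$ versus $25$), so the tensor factorization of $L$ does not persist through refinement in the way you need. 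But even if it did, the gap above would remain.
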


\paragraph{}
The reason why this is true is that the $(k+1)$'th 
Barycentric refinement of a maximal $d$-simplex consists of $(d-1)!$ smaller disks glued along 
$d$-dimensional parts which have a cardinality growing slower. 
The gluing process changes a negligible amount of matrix entries. This 
can be estimated using a result of Lidskii-Last \cite{SimonTrace}. 
Examples of Laplacians for which the result holds are the 
Kirchhoff matrix of the graph which has $G$ as the Whitney complex or 
the Dirac matrix $D=(d+d^*)$ or the Hodge Laplacian $D^2$ 
or the connection graph $L$ of $G_n$.

\paragraph{}
In the one-dimensional case with Kirchhoff Laplacian, 
the limiting measure is derivative of the inverse of $F(x)=4 \sin^2(\pi x/2)$. 

\paragraph{}
For $G=C_n$ we look at the eigenvalues of the $v$-torus $C_n^{\rm nu} = C_n \osquare C_n \cdots \osquare C_n$. 
The spectrum defines a discrete measure $dk_n$ which has a weak limit $dk$.

\begin{thm}[Mass gap] 
The weak limit $dk_n$ exists, is absolutely continuous and has support away 
from $0$. The limiting operator is almost periodic, bounded and invertible. 
\end{thm}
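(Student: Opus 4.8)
The plan is to establish three things in sequence: (1) existence of the weak limit $dk_n \to dk$, (2) invertibility of the limiting operator with a quantitative bound, and (3) that this bound is uniform in $n$, so the gap persists. For (1), I would invoke the Barycentric central limit theorem already proved in the excerpt: the density of states of $L(G_n)$ for $G = C_\nu := C_n^{\osquare \nu}$ converges weakly to a measure depending only on the maximal dimension, hence $dk_n$ converges. Since $C_n$ is one-dimensional, the relevant model is the self-similar Barycentric refinement of the interval/circle, and the connection graph $C_n'$ of the circular complex is itself a circular graph on $2n$ vertices — so the connection Laplacian $L(C_n)$ is a $2n \times 2n$ circulant-type matrix. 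For the $\nu$-fold strong product, the Connection lemma gives $L(C_\nu) = L(C_n)^{\otimes \nu}$, so its spectrum is the $\nu$-fold product set of $\sigma(L(C_n))$, and the density of states of the refinement sequence is the $\nu$-fold multiplicative convolution of the one-dimensional limiting measure.

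For (2) and (3), the key quantitative input is the Unimodularity theorem: $\det L(G_n) = \pm 1$ for every refinement, so no eigenvalue is ever exactly zero. But unimodularity alone does not bound $\|L^{-1}\|$, so I would instead track the eigenvalue of $L(C_n)$ of smallest absolute value directly. Here I would use the explicit structure: for a circular complex with $n$ edges and $n$ vertices, $L = 1 + A(G')$ where $G'$ is a circulant graph, so its eigenvalues are computable via roots of unity, and one shows $|\lambda| \ge 1/5$ uniformly in $n$ — this is the one-dimensional instance of the claimed bound $[-1/5^d, 1/5^d]$. Then by the tensor factorization the smallest eigenvalue of $L(C_\nu)$ has absolute value $\ge 1/5^\nu$, uniformly in the refinement index as well, because passing to $G_{n}$ only rescales/subdivides and the relevant circulant eigenvalue computation is stable — more carefully, one uses the Lidskii--Last perturbation estimate cited in the excerpt to control how much the spectrum (and in particular the spectral edge near $0$) can move under the gluing of refinement pieces, which affects only a vanishing fraction of matrix entries. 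Since the mass gap is an open condition on the support and the perturbation in operator norm of the normalized pieces goes to zero, the edge of the spectrum stays bounded away from $0$ in the limit, and the limiting operator — an almost periodic operator on the profinite Barycentric group — is bounded (as $\|L\| \le 1 + \|A\|$ stays finite) and invertible with $\|L^{-1}\| \le 5^\nu$.

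For absolute continuity of $dk$, I would argue as in the referenced Barycentric limit work: the renormalization fixed-point measure for the one-dimensional model is the pushforward of Lebesgue measure under the inverse of an explicit analytic function (the excerpt gives $F(x) = 4\sin^2(\pi x/2)$ for the Kirchhoff case; the connection-Laplacian analogue is a similar explicit trigonometric substitution), hence absolutely continuous, and multiplicative convolution of an a.c. measure with any measure (away from $0$, where the product map is smooth) remains a.c. The main obstacle I anticipate is step (3): proving the gap is genuinely uniform in the refinement index $n$, not just in the product dimension $\nu$. Unimodularity gives nonvanishing but is not quantitative; the honest work is showing the smallest singular value of $L(C_n)$ does not drift to $0$ as $n \to \infty$, which requires either an exact diagonalization of the circulant connection Laplacian of the refined circular complex or a careful self-similarity/transfer-matrix argument controlling the spectral edge, combined with the Lidskii--Last bound to absorb the $O(1)$ boundary-gluing corrections. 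Once that uniform lower bound $1/5$ (hence $1/5^d$ after tensoring) is in hand, boundedness, invertibility, and the location of the support of $dk$ all follow, and almost periodicity is inherited from the profinite structure of the inverse limit of Barycentric refinements.
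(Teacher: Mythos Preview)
Your core strategy matches the paper's: establish the one-dimensional gap $[-1/5,1/5]$ for the connection Laplacian of $C_n$, then use the tensor factorization $L(C_n^{\osquare\nu}) = L(C_n)^{\otimes\nu}$ so that eigenvalues multiply and the gap becomes $[-1/5^\nu,1/5^\nu]$. The paper's proof is essentially those two sentences and nothing more.

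Where you diverge is in the one-dimensional step and in the amount of auxiliary machinery. The paper obtains the $1/5$ bound not by diagonalizing a circulant but by bounding $\|g\| = \|L^{-1}\|$ directly via the Green's function entries; your proposed route through explicit eigenvalue computation is fine, but note that the connection graph $C_n'$ is \emph{not} a $2n$-cycle (vertices of $C_n$ have degree~$2$ in $C_n'$ while edges have degree~$4$), so $L(C_n)$ is block-circulant with $2\times 2$ blocks rather than plain circulant---still explicitly diagonalizable by Fourier, just not as you stated. More importantly, the Barycentric central limit theorem and the Lidskii--Last perturbation estimates you invoke are unnecessary here: the sequence $C_n$ already has exact translation invariance for every finite $n$, so the spectral edge is computed exactly at each stage and no gluing-error argument is needed to carry the gap to the limit. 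Your treatment of absolute continuity and almost periodicity is more than the paper's proof actually supplies (the paper simply asserts them in the statement), so that part of your proposal is additional work, not a deviation.
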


\begin{proof}
For $\nu=1$, the density of states has a support which contains two intervals. 
The interval $[-1/5,1/5]$ is excluded as we can give a bound on $g=L^{-1}$. 
For general $\nu$, the gap size estimate follows from the fact that under
products, the eienvalues of $L$ multiply. 
\end{proof}

\section{Stability in the infinite volume limit}

\paragraph{}
To the lattice $Z^\nu$ belongs  the Hodge Laplacian 
$H_0 u(n) =$ $\sum_{i=1}^{\nu} u(n+e_i)-u(n)$. In the one-dimensional case, where
$H_0 u(n)=u(n+1)-2u(n)-u(n-1)$ corresponds after a Fourier transform to the multiplication
by $2 \cos(x)-2$ we see that the spectrum of $H_0$ is $[0,4]$. The non-invertibility leads
to ``small divisor problems". Also, the trivial linear solutions $u(n) = \theta + \alpha n$ 
to $Lu=0$ are minima. Its this minimality which allows the problem to be continued to 
nonlinear situations like $u(n+1)-2u(n)-u(n-1) = c \sin(u(n))$ for Diophantine $\alpha$.
For $\alpha=p/q$, Birkhoff periodic points and for general
$\alpha$, minimizers in the form of Aubry-Mather sets survive. For the connection Laplacian $L$
$0$ is in a gap of the spectrum. 

\paragraph{}
Unlike for the Hodge Laplacians in $Z^d$ which naturally are expressed on 
the Pontryagin dual of $T^d$, for which the Laplacian has spectrum containing $0$, we 
deal with the dual of is $\DD_2^{\nu}$, where $\DD_2$ is the {\bf dyadic group}. The limiting operator 
of $L$ is almost periodic operator and has a bounded inverse. Whereas in the Hodge case without mass gap, a 
{\bf strong implicit function theorem} is required to continue solutions of nonlinear Frenkel-Kontorova 
type Hamiltonian systems $Lu+ \epsilon V(u)=0$, the connection Laplacian is an invertible kinetic part 
and perturbation theory requires only the weak implicit function theorem. Solutions of the Poisson equation $L u = \rho$ 
for example can be continued to nonlinear theories $(L+V) v = \rho$. Since Poisson equations have unique solutions,
also the discrete Dirichlet has unique solutions. The classical Standard map model $L_0 u  + c \sin(u) = 0$
for example which is the Chirikov map in in 1D, requires KAM theory for solutions $u$ to exist. Weak
solutions also continue to exist by {\bf Aubry-Mather theory} \cite{MoserVariations}. 

\paragraph{}
If the Hodge Laplacian is replaced by the connection Laplacian,
almost periodic solutions to driven systems continue to exist for small $c$ similarly as
the {\bf Aubry anti-integrable limit} 
does classically for large $c$. But these continuations are in general
not interesting. Any Hamiltonian system with Hamiltonian $L+V$, where the kinetic energy is the connection Laplacian
remains simple. If we look at $L u = W$ where $W$ is obtained from an continuous function on the dyadic integers,
then the solution $u$ is given by a continuous function on the dyadic integers. This could be of some
interest but there is no interpretation of this solution as an orbit of a Hamiltonian system. 

\begin{coro}
A nonlinear discrete difference equation $Lu+\epsilon V(u)  = g$ has a unique almost periodic
solution if $g(n) = g(T^nx)$ is obtained from $Z^{\nu}$ action on $\DD_2^{\nu}$.
\end{coro}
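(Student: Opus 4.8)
The plan is to reduce the corollary to the Mass gap theorem of the previous section together with a standard weak (Banach-space) implicit function theorem argument, exploiting that the connection Laplacian $L$ on the lattice $\ZZ^\nu$ acting on $\DD_2^\nu$ is bounded, self-adjoint and invertible with a spectral gap around $0$. First I would set up the functional-analytic frame: let $X$ be the Banach space of continuous functions on the profinite group $\DD_2^\nu$ (equivalently, almost periodic sequences on $\ZZ^\nu$ obtained from the $\ZZ^\nu$-action by translation), with the sup norm. The Barycentric limit theorem (the version for $A=L$) together with the Mass gap theorem tells us that the infinite-volume connection Laplacian is a bounded almost periodic operator on $X$ whose spectrum avoids a neighborhood $[-1/5^\nu,1/5^\nu]$ of $0$; hence $L^{-1}$ exists as a bounded operator on $X$ with $\|L^{-1}\|\le 5^\nu$. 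Crucially, because $L$ commutes with the $\ZZ^\nu$-translations, so does $L^{-1}$, so $L^{-1}$ maps $X$ into $X$ (continuous functions on $\DD_2^\nu$ to continuous functions on $\DD_2^\nu$), and in particular $L^{-1}g \in X$ whenever $g(n)=g(T^n x)$ is of the stated form.

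Next I would rewrite the equation $Lu+\epsilon V(u)=g$ as a fixed point problem $u = L^{-1}g - \epsilon L^{-1}V(u) =: \Phi_\epsilon(u)$ on $X$. Assuming (as is implicit in the Frenkel–Kontorova setup, e.g.\ $V(u)(n)=\sin(u(n))$) that $V$ acts as a Lipschitz superposition operator on $X$ — that is, $\|V(u)-V(w)\|\le M\|u-w\|$ on a suitable ball, with $M$ controlled by $\|V'\|_\infty$ — the map $\Phi_\epsilon$ is a contraction on $X$ as soon as $|\epsilon|\, \|L^{-1}\|\, M < 1$, i.e.\ $|\epsilon| < 5^{-\nu}/M$. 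The Banach fixed point theorem then gives a unique $u\in X$ solving the equation; since $u\in X$ means $u$ is a continuous function on $\DD_2^\nu$, it is automatically almost periodic and generated by the same $\ZZ^\nu$-action, which is exactly the assertion. Uniqueness within $X$ is immediate from the contraction; if one wants uniqueness among all bounded sequences (not just almost periodic ones), one runs the same contraction argument on $\ell^\infty(\ZZ^\nu)$, using that $L^{-1}$ is also bounded there (same spectral-gap bound, since $L$ is a bounded banded operator) and that $\Phi_\epsilon$ preserves the closed subspace $X$.

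The main obstacle is establishing cleanly that the infinite-volume $L$ is invertible \emph{as an operator on $X$} with a uniform bound, i.e.\ transferring the finite-volume gap estimate $[-1/5^\nu,1/5^\nu]\cap\sigma(L(C_n^\nu))=\emptyset$ to the van Hove / Barycentric limit. This is where one must invoke the Barycentric central limit theorem and the Mass gap theorem together: the first gives weak convergence of the densities of states and almost periodicity of the limit operator, and the second gives that the gap persists in the limit because the eigenvalues of $L$ multiply under the strong product, so the one-dimensional gap $|\lambda|\ge 1/5$ forces $|\lambda_1\cdots\lambda_\nu|\ge 1/5^\nu$ in the $\nu$-fold product. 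Once the bound $\|L^{-1}\|_{X\to X}\le 5^\nu$ is in hand, the rest is the routine contraction estimate above; the only remaining care is the (standard) verification that $V$ is a well-defined Lipschitz map on the sup-norm space, which holds for the smooth pointwise nonlinearities of Frenkel–Kontorova type that the corollary has in mind.
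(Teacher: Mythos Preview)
Your proposal is correct and follows essentially the same route as the paper: solve the $\epsilon=0$ Poisson equation $Lu=g$ using the invertibility of $L$ coming from the mass gap, obtain an almost periodic $u=L^{-1}g\in C(\DD_2^\nu)$, then continue to small $\epsilon$ by a weak perturbation argument. The paper phrases this last step as the ``standard implicit function theorem'' whereas you implement it directly as a Banach fixed point argument with the explicit contraction bound $|\epsilon|\,\|L^{-1}\|\,M<1$; the two are equivalent here, and your version has the modest advantage of making the smallness condition on $\epsilon$ quantitative.
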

\begin{proof}
For $\epsilon=0$, we have the Poisson equation $Lu=g$ which has the solution $u=L^{-1} g$,
where $u(n)=h(T^nx)$ is an almost periodic process with $h \in C(\DD_2^{\nu})$.      
Since $L$ is invertible, the standard implicit function theorem allows a continuation for small
$\epsilon$.
\end{proof}

\paragraph{}
It could be useful to continue a Hamiltonian to the infinite limit.
An example is the Helmholtz Hamiltonian like $H(\psi) = (\psi, g \psi)$, where $g=L^{-1}$.
Since the Hessian $g$ is invertible, we can continue a minimal solution to $H(\psi) + \beta V(\psi)$
for small $\beta$ if $V$ is smooth. 
We suspect that we can continue almost periodic solutions to the above defined Helmholtz system 
$V(\psi) = \beta S(|\psi|^2)$ with entropy $S$ for small $\beta$ but there is a technical difficulty
as $p \to p \log|p|$ is not smooth at $p=0$, only continuous. One could apply the weak implicit
function theorem to continuous almost periodic functions on $X=D_2^{\nu}$ if
$f \to S(|f|)$ was Fr\'echet differentiable on the Banach space $X$ but we have not proved that. 
It might require to smooth out $S_{\epsilon}$ first then show that the solution survives in the limit
$\epsilon \to 0$. 

\section{A pseudo Riemannian case}

\paragraph{}
When implementing a speudo Riemannian metric signature like $(+,+,+,-)$ on the lattice $\ZZ^4$, this 
changes the sign of the corresponding Hodge dual $d^*$. The Dirac operator of that coordinate axes 
is now $d_i - d_i^*$ and the corresponding Hodge operator is $D^* D = -H$ has just changed sign. 
The connection Laplacian $L$ is not affected by the change of Riemannian metric. 
Having a Pseudo Riemannian metric on $\ZZ^4$, we can look
at kernel elements $Hu=0$ as solutions to a discrete wave equation. Global solutions on a 
compact space like the product of circular graphs $C_n^4=C_n \times C_n \times C_n \times C_n$ 
are not that interesting. On an infinite lattice, we could prescribe
solutions on the space hypersurface $t=0$ and then continue it. Technically this leads to
a {\bf coupled map lattice}. \\

\paragraph{}
If $H$ is the operator of the $4$-torus $C_n^4$ with Lorentzian metric signature, then the 
eigenvalues are all of the form 
$\lambda_1 + \lambda_2+\lambda_3 - \lambda_4$, where $\lambda_i$ are eigenvalues of $C_n$. 
This just shifts the eigenvalues. This completely answers also the question what the
limiting density of states.

\begin{propo}
The spectrum of the Hodge operator of the $\nu$-torus $\TT_n^{\nu}$ with 
Lorentz signature $(m,k)$ agrees with the spectrum of the Hodge operator for the
signature $(\nu,0)$ shifted by $-4k$. 
\end{propo}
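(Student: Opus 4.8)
The plan is to reduce the statement to two facts that are already in hand: the K\"unneth-type block decomposition of a Hodge Laplacian of a product (as used in the proof of the K\"unneth theorem above, giving $\sigma(H(A\times B))=\sigma(H(A))+\sigma(H(B))$ as a Minkowski sum of spectra), together with an explicit symmetry of the spectrum of a single circle. Write $\TT_n^{\nu}=C_n\times\cdots\times C_n$ with $\nu$ factors, $m+k=\nu$, and put $\Sigma=\sigma(H(C_n))$. First I would record that in the Euclidean signature $(\nu,0)$ there is, exactly as in the K\"unneth proof, an orthonormal basis adapted to the Hodge decomposition ${\rm im}(d_i)\oplus{\rm im}(d_i^{*})\oplus\ker(H_i)$ of each factor in which $H(\TT_n^{\nu})$ is a direct sum of operators of the form $\sum_{i=1}^{\nu}H_i$, where $H_i$ acts as $H(C_n)$ on the $i$-th tensor slot and as the identity on the remaining slots. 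Hence $\sigma(H(\TT_n^{\nu}))$ is the $\nu$-fold Minkowski sum $\Sigma+\cdots+\Sigma$.

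Next I would run the same computation with a pseudo-Riemannian signature. Flipping the metric sign on one of the axes replaces $d_j^{*}$ by $-d_j^{*}$ there, so the Dirac operator of that factor is $d_j-d_j^{*}$ and its square is $-(d_jd_j^{*}+d_j^{*}d_j)=-H_j$; the cross-term cancellation in the K\"unneth computation uses only the orthogonality of ${\rm im}(d_j)$, ${\rm im}(d_j^{*})$ and $\ker(H_j)$ and is insensitive to this overall sign, so the same basis block-diagonalizes $H$ in the signature $(m,k)$, now into summands $\sum_{i\ {\rm spacelike}}H_i-\sum_{j\ {\rm timelike}}H_j$. Therefore the spectrum of the Lorentzian $\TT_n^{\nu}$ is the Minkowski sum of $m$ copies of $\Sigma$ and $k$ copies of $-\Sigma$.

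It then remains to prove the key lemma $-\Sigma=\Sigma-4$, i.e. that $\Sigma$ is symmetric about $2$. Here I would use that, as a set, $\sigma(H(C_n))$ is the spectrum of the Kirchhoff matrix of $C_n$ (the higher form Laplacian of a circle being isospectral to the $0$-form Laplacian), a circulant whose eigenvalues are $4\sin^{2}(\pi\ell/n)$ for $\ell=0,\dots,n-1$, and note $4-4\sin^{2}(\pi\ell/n)=4\cos^{2}(\pi\ell/n)=4\sin^{2}\!\bigl(\pi(n-2\ell)/(2n)\bigr)$, which is again a member of the list, with index $\ell'=n/2-\ell$, when $n$ is even. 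Granting the lemma, every timelike slot contributes $\Sigma-4$ in place of $\Sigma$, so the $(m,k)$ spectrum equals the $\nu$-fold Minkowski sum of $\Sigma$ shifted by $-4k$, and this $\nu$-fold sum is precisely $\sigma(H(\TT_n^{\nu}))$ in signature $(\nu,0)$; the proposition follows.

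The step I expect to be the genuine obstacle is the sign-flipped K\"unneth cancellation: one has to verify that replacing some $d_j^{*}$ by $-d_j^{*}$ really leaves all cross terms vanishing on the chosen basis and that the one-dimensional cohomology slots of the $C_n$ factors match up consistently, since the harmonic blocks are where the decomposition is most delicate. A secondary subtlety is the symmetry lemma itself: $-\Sigma=\Sigma-4$ is exact only when $4\in\Sigma$, so for odd $n$ one should pass to an even subdivision of the circle or interpret $\TT_n$ accordingly, and this is the point on which the clean shift by exactly $-4k$ rests. Everything else is routine bookkeeping with Minkowski sums of spectra.
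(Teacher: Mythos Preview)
Your approach is essentially the same as the paper's: reduce to the one-dimensional lemma $-\sigma(H(C_n))=\sigma(H(C_n))-4$ and then use that spectra add (Minkowski sum) under the product, so the shift by $-4$ in each timelike factor accumulates to $-4k$. The paper's proof is terser and simply asserts the $\nu=1$ symmetry without computation; your explicit check via $4\sin^2(\pi\ell/n)$ and your flag that the exact identity $-\Sigma=\Sigma-4$ needs $n$ even are well-placed, as the paper does not address this parity issue.
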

\begin{proof}
For $\nu=1$, replacing $H$ with $-H$ has the effect that that the spectrum
changes sign. But this is $\sigma(H)-4$. When taking the product, we get 
a convolution of spectra which commutes with the translation.
As for $C_n$, the spectrum satisfies $\sigma(-H)=\sigma(H)-4$, the switch
to a Lorentz metric goes over to the higher products. 
\end{proof}

\paragraph{}
As the Lorentz space appears in physics, the geometry of the
$\ZZ^4$ lattice with Lorentz signature metric $(+,+,+,-)$ is of interest.
The Barycentric limit leads to more symmetry in this discrete lattice case.
The limiting operators $D$ and $L$ are almost periodic on $G_2^{4}$, 
the compact group of dyadic integers. Besides of the group translations, there
are also {\bf scaling symmetries}. By allowing both scaling transformations
and group translations, we can implement symmetries which approximate Euclidean
symmetries in the continuum. The obstacle of poor symmetry properties 
in a discrete lattice appears to disappear in the Barycentric limit. 

\section{Illustrations}

\paragraph{}
Here is an illustration of an element $G=C_4-2 K_3 + (L_2 \times L_3)$ 
in the strong ring. In the Stanley-Reisner picture, we can write the ring 
element as
\begin{eqnarray*}
   f_G &=& a+ab+b+bc+c+cd+d+ad  - 2(x+y+z+xy+xz+yz)  \\
       &+& (u+uv+v) (p+pq+q+qr+r) \; . 
\end{eqnarray*}
The Euler characteristic is $\chi(G) = -f_G(-1,-1, \dots) = -1$. 

\paragraph{}
The ring element $G=C_4-2 K_3 + (L_2 \times L_3)$ in the strong ring
is a sum of three parts, where the first is a Whitney complex of a graph,
the second is $-2$ times a Whitney complex and the third is a product of
two Whitney complexes $L_2$ and $L_3$. 

\paragraph{}
In Figure~({\ref{example}), we drew the weak Cartesian product to visualize $L_2 \times L_3$. 
In reality, $L_2 \times L_3$ is not a simplicial complex. There are 6 two-dimensional 
{\bf square cells} present, one for each of the $6$ holes present in the weak product. 
A convenient way to fill the hole is to look at the Barycentric refinement 
$(L_2 \times L_3)_1$ as done in \cite{KnillKuenneth}. This is then a {\bf triangulation} 
of $L_2 \times L_3$. 

\paragraph{}
In Figure~({\ref{exampleconnection})
we see the connection graph $G'$ to the ring element $G$.
It is a disjoint union
of graphs, where the connection graphs to $K_3$ are counted negatively.
The connection Laplacian is
$L(G) = L(C_4) \oplus (-L(K_3)) \oplus (-L(K_3)) \oplus [ L(L_2) \otimes L(L_3) ]$.
Figure~({\ref{examplebarycentric}) shows the Barycentric refinement graph 
$G_1$. 

\begin{figure}[!htpb]
\scalebox{0.94}{\includegraphics{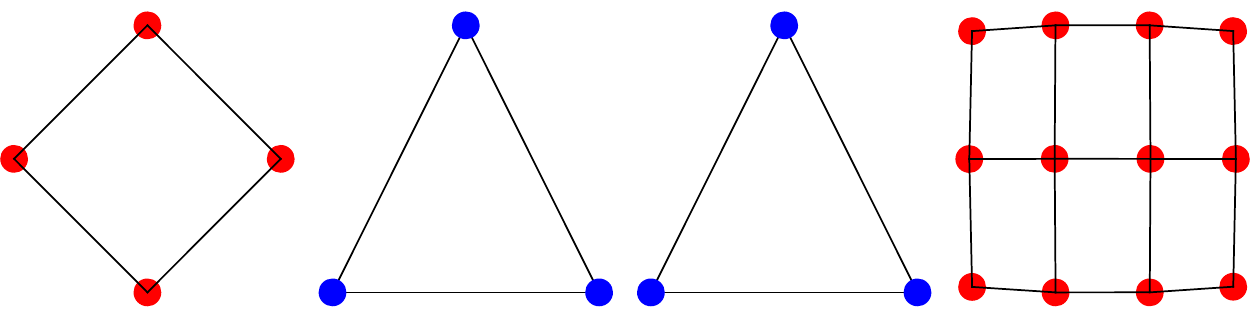}}
\label{example}
\caption{
$G=C_4-2 K_3 + (L_2 \times L_3)$ in the strong ring.
}
\end{figure}

\begin{figure}[!htpb]
\scalebox{0.94}{\includegraphics{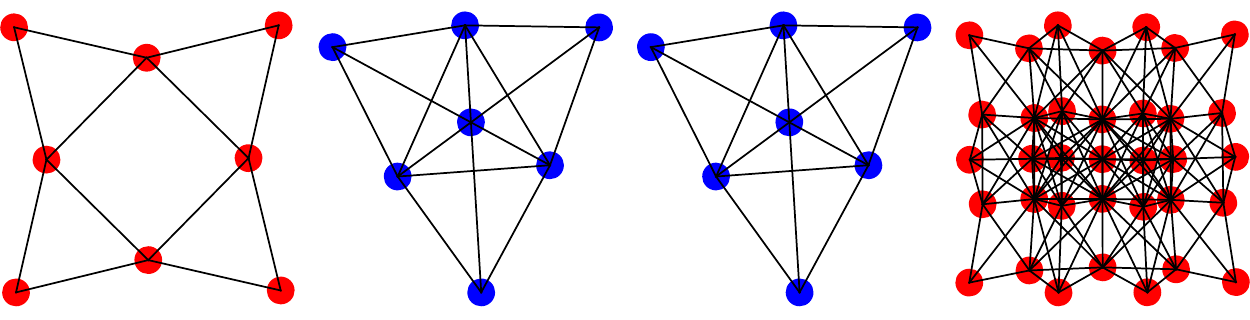}}
\caption{
The connection graph of $G$.
}
\label{exampleconnection}
\end{figure}

\begin{figure}[!htpb]
\scalebox{0.94}{\includegraphics{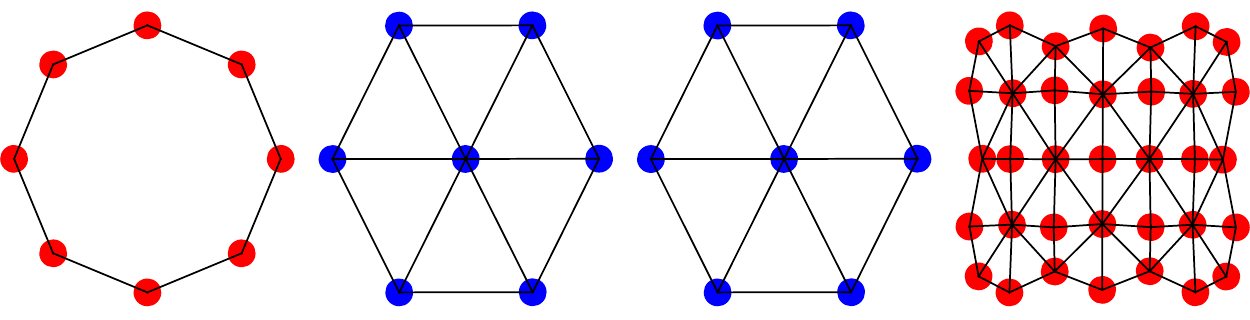}}
\caption{
The Barycentric refinement $G_1$ of $G$.
}
\label{examplebarycentric}
\end{figure}

\paragraph{}
Figure~({\ref{S2S3}) shows $G=S^2 \times S^3$. The 2-sphere $S^2$ is implemented as the
Octahedron graph with $f$-vector $(6,12,8)$, the smallest 2-sphere, which already
Descartes has super-summed to $6-12+8=2$. The number of cells is $26$.
The complex $S^3$ is the suspension of $S^2$. It has $f$-vector $(8,24,32,16)$ 
which super-sums to $\chi(S^3)=8-24+32-16=0$ as any 3-manifold does. The number of cells 
is $80$. The product has $26*80=2080$ cells
It is a 3-sphere, a cross polytop. The Poincar\'e polynomial of $G$ is 
$(1+x^2) (1+x^3) = 1+x^2+x^3+x^5$. Indeed, the Betti vector of this 5-manifold
is $(1,0,1,1,0,1)$. 

\paragraph{}
We see how useful the ring is. We did not have to build a 
triangulation of the 5-manifold as we had done in \cite{KnillKuenneth} 
where we defined the product to be $G_1$ in order to have a Whitney complex of a graph
$G_1$ with 2080 vertices and 51232 edges.
The Dirac and Hodge operator for $G=S^2 \times S^3$ is seen in Figure~({S2S3}).
They are both $2080 \times 2080$ matrices. As for any
5-dimensional complex, the Hodge operator has 6 blocks.
Blocks $H_1,H_3,H_4,H_6$ have a one-dimensional kernel.
McKean-Singer super symmetry shows that for the
union of the non-zero spectra $H_1,H_3,H_5$ is the
union of the non-zero spectra of $H_2,H_4,H_6$.

\paragraph{}
The ring element $G=C_4-2 K_3 + (L_2 \times L_3)$ in the strong ring
is a sum of three parts, where the first is a Whitney complex of a graph,
the second is $-2$ times a Whitney complex and the third is a product of
two Whitney complexes $L_2$ and $L_3$. We drew the weak Cartesian product to visualize $L_2 \times L_3$.
In reality, $L_2 \times L_3$ is not a simplicial complex. There are 6 additional
two dimensional cells present in the CW complex representing it,
one for each of the 6 holes present in the weak product.

\paragraph{}
Figure~(\ref{nonunique}) illustrates non-unique factorization in the strong ring
It is adapted from \cite{HammackImrichKlavzar} in weak ring.
The two ring elements  $G_1 \times G_2$ and $H_1 \times H_2$ are the
same. It is the decomposition
$(1+x+x^2)(1+x^3) =(1+x^2+x^4)(1+x)$, where $1$ is a point $K_1$
$x$ is an interval $K_2$. This gives the square $x^2 = K_2 \times K_2$,
the cube $x^3=K_2 \times K_2 \times K_2$ and hyper cube
$x^4=K_2 \times K_2 \times K_2 \times K_2$.

\begin{figure}[!htpb]
\scalebox{0.25}{\includegraphics{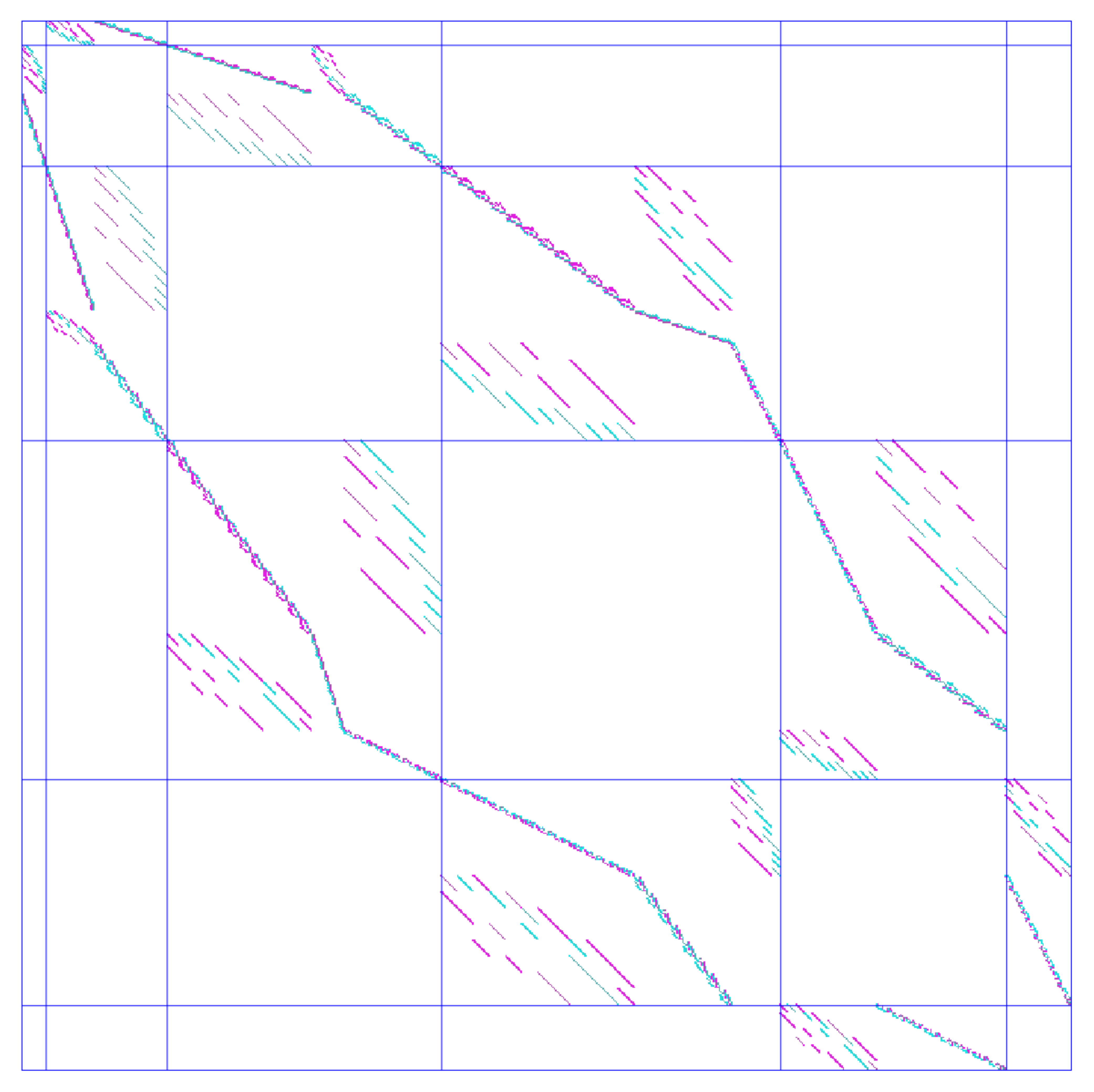}}
\scalebox{0.25}{\includegraphics{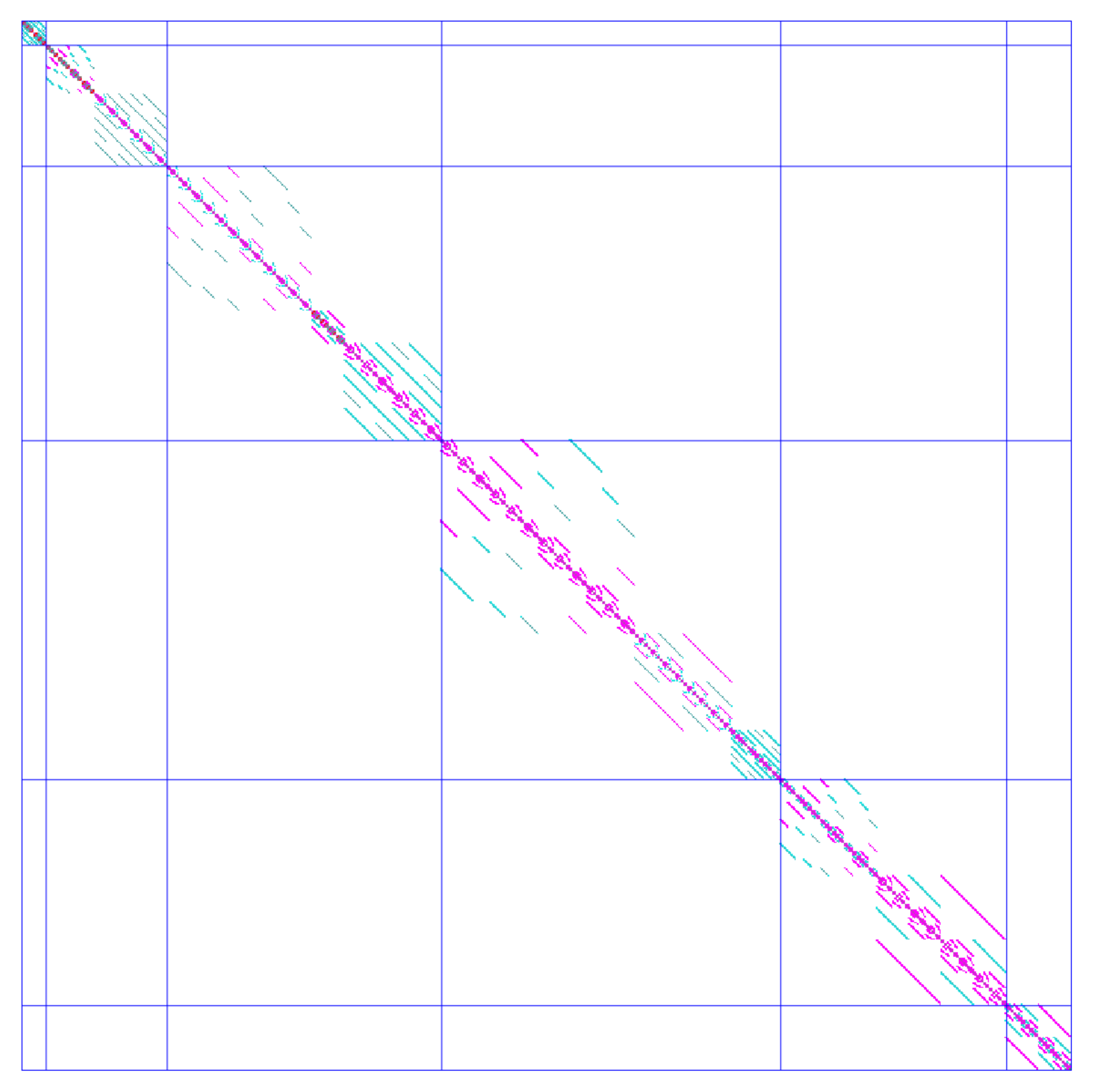}}
\caption{
The Dirac and Hodge operator for $G=S^2 \times S^3$. 
}
\label{S2S3}
\end{figure}

\begin{figure}[!htpb]
\scalebox{0.24}{\includegraphics{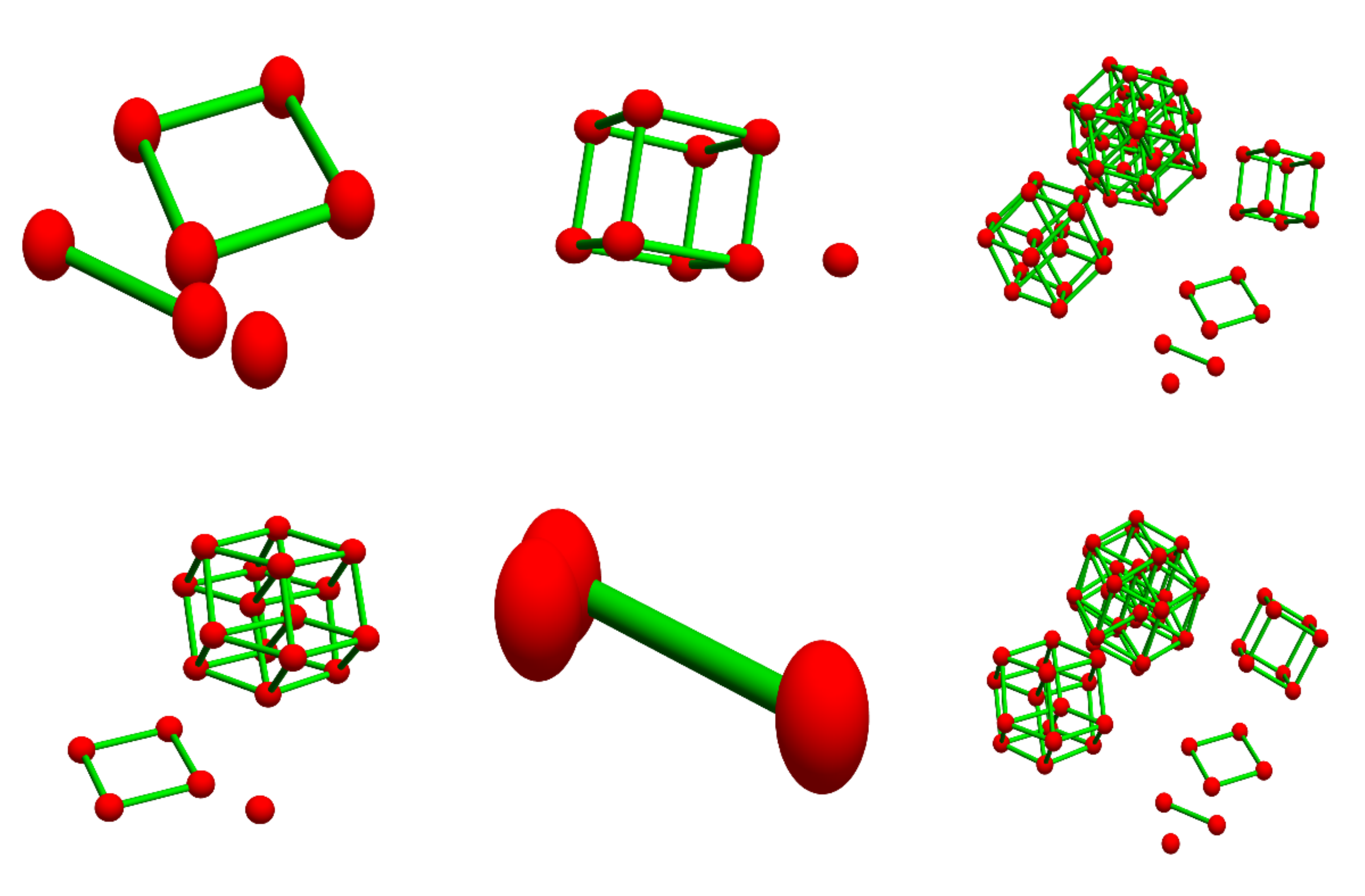}}
\caption{
Non-unique prime factorization. The products $AB$
and $CD$ of the two ring elements produces the same 
$G$. This is only possible if $G$ is not connected.
}
\label{nonunique}
\end{figure}

\begin{figure}[!htpb]
\scalebox{0.19}{\includegraphics{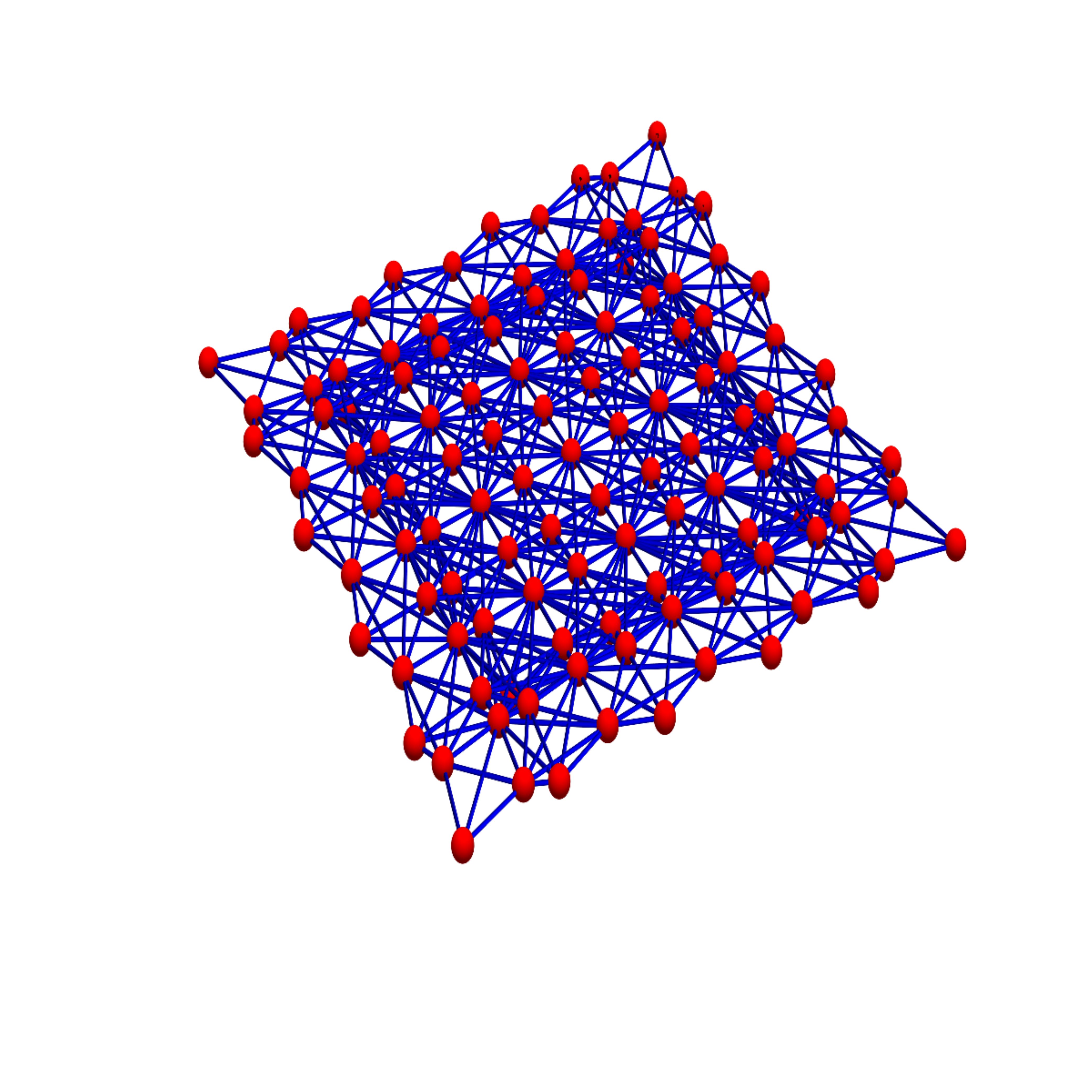}}
\caption{
The connection graph of $G=L_5 \times L_5$ where $L_n$ is the linear 
graph of length $n$ is part of the connection graph of the discrete
lattice $Z^1 \times Z^1$. The adjacency matrix $A$ of the graph $G$
seen here has the property that $L=1+A$ is the connection graph which is
invertible. 
}
\end{figure}

\begin{figure}[!htpb]
\scalebox{0.24}{\includegraphics{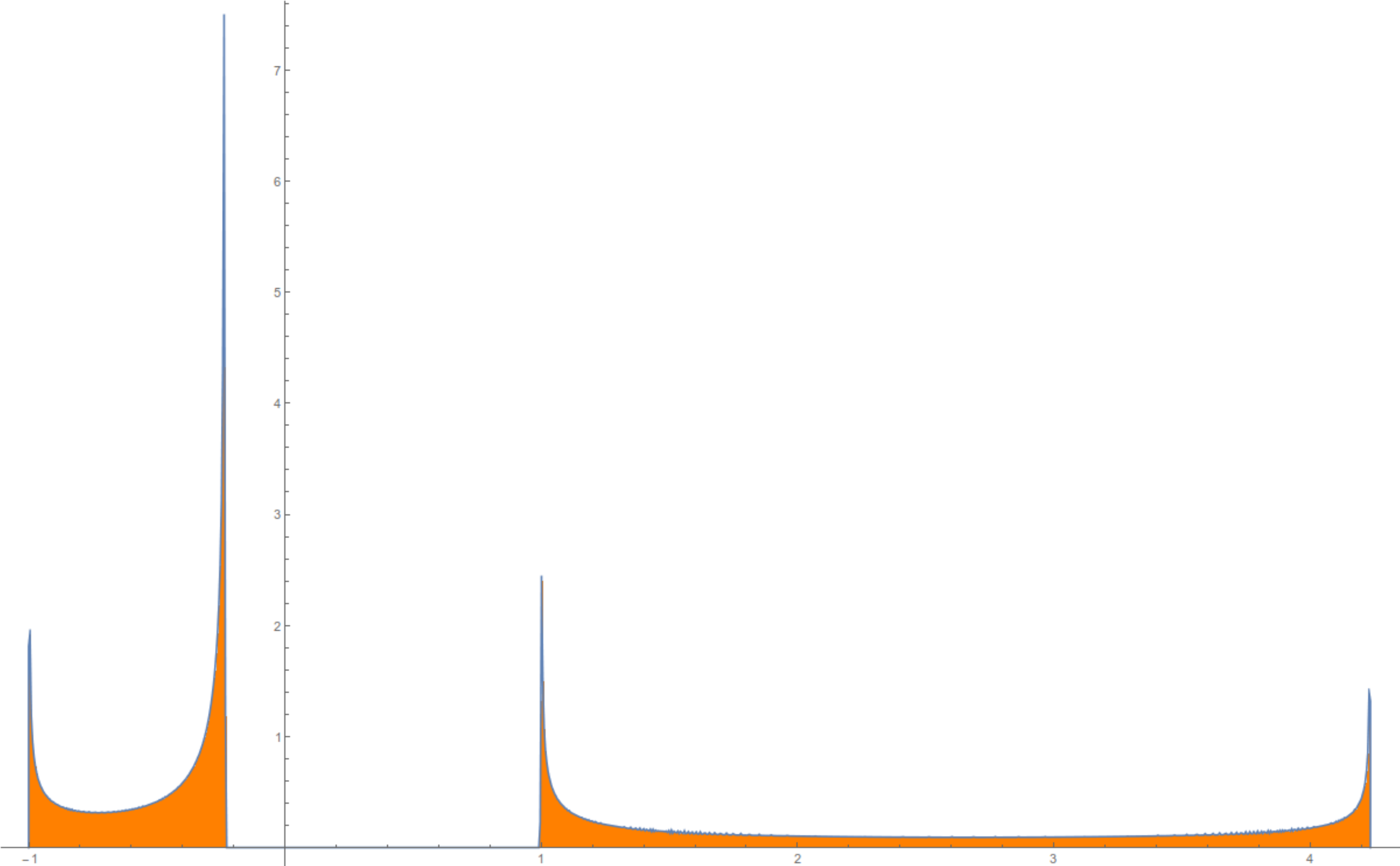}}
\caption{
The density of states of the connection Laplacian of $\ZZ$ has a mass gap at $0$.
We actually computed the eigenvalues of the connection Laplacian of $C_{10000}$ which has
a density of states close to the density of states of the 
connection Laplacian of $\ZZ$. The mass gap contains $[-1/5,1/5]$. 
}
\end{figure}

\begin{figure}[!htpb]
\scalebox{0.24}{\includegraphics{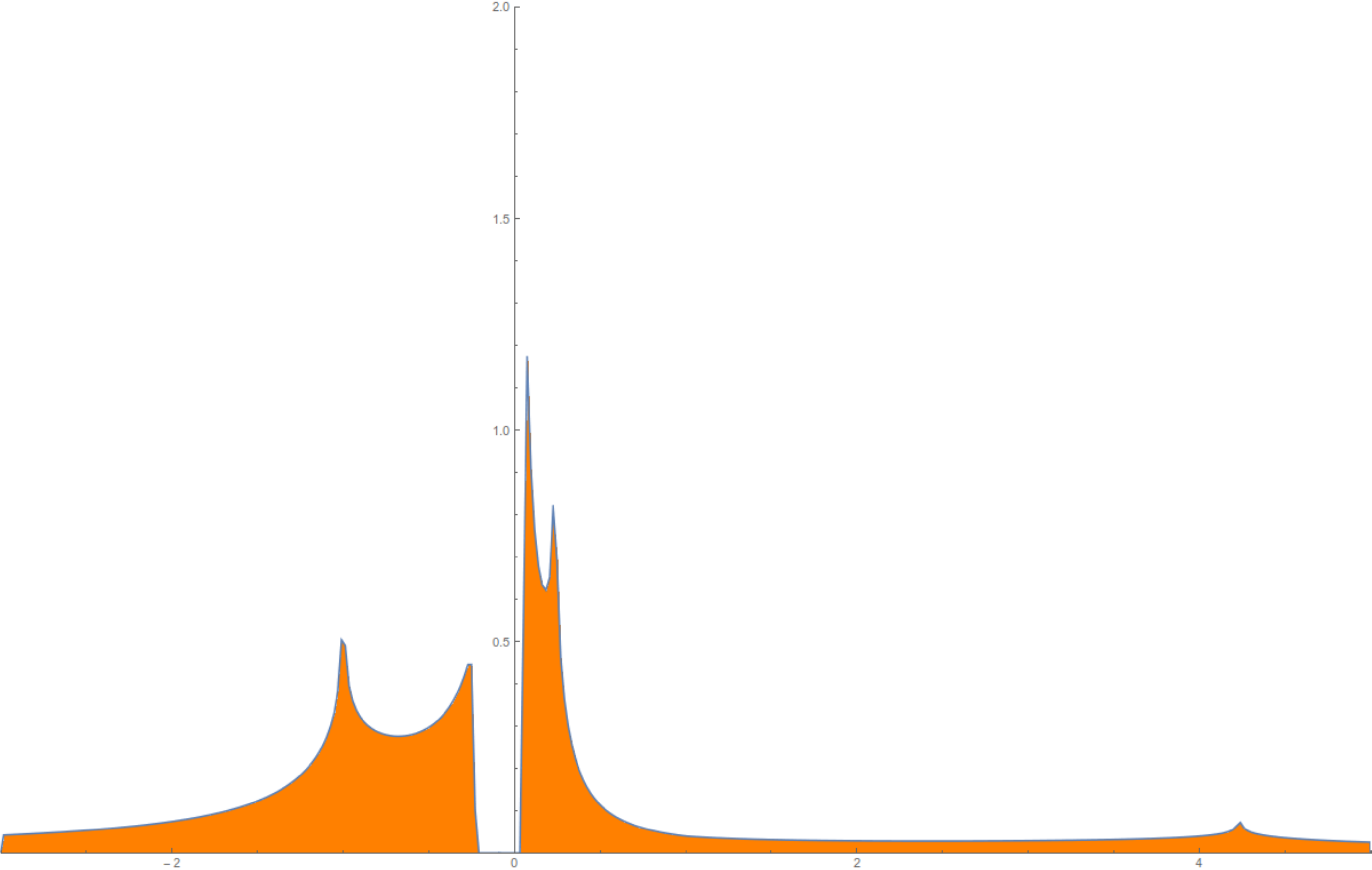}}
\caption{
Part of the density of states of $\ZZ^2$ for finite dimensional approximations like
$G=C_n \times C_n = C_{1000} \times C_{1000}=G_1 \times G_2$.
The eigenvalues of $L(G)$ are the products $\lambda_i \lambda_j$ 
of the eigenvalues  $\lambda_i$ of $L(G_i)$. 
The mass gap contains $[-1/25,1/25]$ which is independent of $n$. 
}
\end{figure}

\section*{Cartesian closed category}

\paragraph{}
The goal of this appendix is to see the strong ring of
simplicial complexes as a {\bf cartesian closed category} and to ask
whether it is a {\bf topos}. As we have already finite products, the first
requires to show the existence of exponentials.
Cartesian closed categories are important in computer science as they have 
{\bf simply typed lambda calculus} as language. Also here, we are close to computer science as 
we deal with a category of objects which (if they are small enough) can be realized 
in a computer. The elements can be represented as polynomials in a 
ring for example. We deal with a combinatorial category which can be explored without 
the need of finite dimensional approximations. It is part of combinatorics as all objects
are finite. 

\paragraph{}
In order to realize the ring as a category we need to define the
{\bf morphisms}, identifying an initial and terminal object (here $0=\emptyset$ and $1=K_1$) 
and show that {\bf currying} works: there is an exponential object $K^H$ in the ring
such that the set of morphisms $C(G \times H,K)$ from $G \times H$ to $K$ 
corresponds to the morphisms $C(G,K^H)$ via a {\bf Curry bijection} seeing a graph $z=f(x,y)$ 
of a function of two variables as a graph of the function 
$x \to g_x(y)=f(x,y)$ from $G$ to functions from $H$ to $K$. 

\paragraph{}
The existence of a product does not guarantee that a category is
cartesian closed. Topological spaces or smooth manifolds are not
cartesian closed but compactly generated Hausdorff spaces are. 
In our case, we are close to the category of finite sets which is
cartesian closed. Like for finite sets we are close to computer science as
procedures in computer programming languages are using the Curry bijection. 
Since the object $K^H$ is in general very large, it is as for now 
more of theoretical interest.

\paragraph{}
The strong ring $R$ resembles much the category of sets but there are negative
elements in $R$. We can look at the category of {\bf finite signed sets} which is
the subcategory of zero dimensional signed simplicial complexes. Also this
is a ring. It is isomorphic to $\ZZ$ as a ring but the set of morphisms produces
a category which has more structure than the ring $\ZZ$. 
This category of signed $0$-dimensional simplicial complexes is Cartesian closed in the
same way than the category of sets is. It is illustrative to see the exponential 
element $2^G$ is th set of all subsets of $G$. But this shows how exponential elements 
can become large. 

\paragraph{}
A simplicial complex $G$ as a finite set of non-empty sets, which is
closed under the operation of taking non-empty subsets. 
[The usual definition is to looking at the {\bf base set} $V=\bigcup_{x \in G} x$
and insisting that $G$ is a set of subset of $V$ with the property that if $y \subset x$
and $x \in G$ then $y \in G$ and also asking that $\{v\} \in G$ for every $v \in V$.
Obviously the first given {\bf point-free definition} is equivalent. ]
There is more structure than just the set of sets as the elements in $G$ are 
partially ordered. A morphism between two simplicial complexes is not just a 
map between the sets but an {\bf order preserving map}. This implies that the simplices
are mapped into each other. We could rephrase this that a morphism induces a graph 
homomorphism between the barycentric refinements $G_1$ and $H_1$ 
but there are more graph homomorphisms in general on $G_1$.  

\paragraph{}
The class $\C$ of simplicial complexes is a category for which the 
objects are the simplicial complexes and the morphisms are 
{\bf simplicial functions}, functions which preserve simplices. In
the point-free definition this means to look at functions 
from $G$ to $H$ which preserve the partial order. 
In order to be close to the definition of continuous functions 
(the morphisms in topological spaces) or measurable functions
(the morphisms in measure spaces) one could ask that $f^{-1}(A)$ 
is a simplicial complex for every simplicial complex. As $f^{-1}(A)$ can be
the empty complex this is fine. [If for some set $y \in H$, the inverse $f^{-1}(x)$ 
can be empty would not be good since simplicial complexes never contain the empty set.
This is fine for the empty complex, which does not contain the empty set neither.
But if we look at $x$ as a complex by itself, then $f^{-1}(x)$ can be the empty complex.
It is in general important to distinguish the elements $x$ in the simplicial complex from the
subcomplex $x$ it represents, evenso this is often not done. ]

\paragraph{}
The category of simplicial complexes is close to the category of finite
sets as the morphisms are just a subclass of all functions. There is an other
essential difference: the product $G \times H$ of two simplicial complexes is 
{\bf not} a simplicial complex any more in general, while the product $G \times H$
as sets is a set. This is also different from 
{\bf geometric realizations} of simplicial complexes (called ``polyhedra" 
in algebraic topology), where the product is a simplicial complex, the geometric
realization of the Barycentric refinement of $G \times H$ will do. 

\paragraph{}
[To compare with topologies O, where continuous maps have the property that $f^{-1}(x) \in O$
for every $x \in O$, morphisms of simplicial complexes have the property $f(x) \in H$ for $x \in G$.
But only surjective morphisms also have the property that $f^{-1}(y) \in G$ for every $y \in H$, the
reason being that $f^{-1}(x)$ can be empty. 
A continuous map on topological spaces which is also open has the property that $f$ and $f^{-1}$ preserve
the topology. A constant map for example is in general not open. We see that not only the object
of simplicial complex is simpler but also that the morphisms are simpler.]
It is better to therefore of a morphism $f$ between simplicial complexes
as a map for which both $f$ and $f^{-1}$ preserve sub simplicial complexes.

\paragraph{}
In order to work within the class of simplicial complexes (actually the special case of 
Whitney complexes of graphs), we had looked in \cite{KnillKuenneth} at the Barycentric
refinements of Cartesian products and called this the Cartesian product.
We had to live however with the consequence that
the product $(G,H) \to (G \times H)_1$ is {\bf not associative}: already $(G \times K_1)_1 = G_1$
is the Barycentric refinement of $G$. While the geometric realization of the Barycentric
refinement $G_1$ is topologically equivalent to $G$, there is a problem with products
as in the topological realization $|K_2 \times K_2| = |K_4|$ meaning that the arithmetic
is not compatible. The geometric realization destroys the arithmetic. In the strong ring
$K_4$ {\bf is a multiplicative prime}, in the geometric realization, it is not; it 
decays as $K_2 \times K_2$. 

\paragraph{}
Having enlarged the category to the strong ring, we have not only to deal with morphisms for
simplicial complexes, we also have to say what the morphisms in the ring are. 
The definition is recursive with respect to the {\bf degree} of a ring element, 
where the degree is the degree in the Stanley-Reisner 
polynomial representation. A map $G \to H$ is a morphism, if it is a 
morphism of simplicial complexes if $G,H$ are simplicial complexes and if 
for every pair $G,H$ in the ring, there is a pair $A,B$ in the ring and morphisms 
$g:G \to A, h:H \to B$ such that $f(G \times H)=g(G) \times h(H)$. 
[By the way, the degree of a monomial in the Stanley-Reisner representation $f_G$ only relates 
to the dimension if we $G$ is prime, that is if $G$ is a simplicial complex. In an product $A \times B$,
the degree of a monomial is $c(A) + c(B)$, where $c$ is the clique number. It is the clique 
number which is additive and not the dimension. The monomial $abcd$ in $(a+b+ab) (c+d+cd)$ 
for example belongs to a two-dimensional cell.  ]

\paragraph{}
The strong ring $S$ is a ring and a category. But it is itself an element in the category 
of rings. The image of the map $\phi: G \to G'$ is a subring $R$ of the Sabidussi ring of all 
graphs. The map $\phi$ is a ring isomorphism. If we think of $S$ as a category, then $R$ can
be thought so too and $\phi$ is now a functor. 
This is nothing strange. Category is a universal language where objects of categories
can be categories themselves. A directed graph for example is a category, where the objects
are the vertices and the morphisms are the directed edges.

\paragraph{}
The strong ring is also a {\bf cartesian monodidal category}, a category with a notion of 
tensor product. The unit is the unit in the ring. 
It is also a {\bf finitely complete category}, which is a category in which {\bf pullbacks} exist:
given any two ring elements $G,H$ and two morphisms $g:G \to K, h: H \to K$, there 
is a subcomplex $K$ of $G \times H$ such that for 
all $(x,y) \in K$, the equation $g(x)=h(y)$ holds. The subcomplex $K$ is called a 
pullback.

\paragraph{}
The strong ring appears also to be a {\bf topos} but we have not yet checked that.
A topos is a cartesian closed category with a 
sub-object classifier. Examples of topoi are sets or the G-dynamical
systems for a group G or the category of sheaves on a topological space.
Topoi enjoy stability properties: the fundamental theorem of
topos theory tells that a topos is stable under slicing, i.e that it is
locally cartesian closed. 

\vfill

\pagebreak

\bibliographystyle{plain}

\end{document}